\def\amsartsection{\@startsection{section}{1}%
	\z@{2.5\linespacing\@plus\linespacing}{.5\linespacing}%
	{\normalfont\scshape\centering}}
\def\section{\vspace*{-34pt}\vspace{\baselineskip}\let\section\amsartsection\section}
\titlespacing{\paragraph}{0em}{0em}{0.5em}
\titlespacing{\subparagraph}{0em}{0em}{0.5em}
\numberwithin{equation}{section}
\definecolor{dblue}{rgb}{0.0, 0.0, 0.55}
\algnewcommand\INPUT{\item[\textbf{Input:}]}%
\algnewcommand\OUTPUT{\item[\textbf{Output:}]}%
\theoremstyle{plain}
\newtheorem{thm}{Theorem}[section]
\newtheorem{defi}{Definition}[section]
\newtheorem{lem}{Lemma}[section]
\newtheorem{rem}{Remark}[section]
\title[Well-posedness results for a new class of stochastic SIR-type models]{Well-posedness results for a new class of stochastic spatio-temporal SIR-type models driven by proportional pure-jump Lévy noise}
\author[M. MEHDAOUI]{Mohamed Mehdaoui$^{\dagger}$}
\address{$\dagger$MAIS Laboratory, MAMCS Group, FST Errachidia, Moulay Ismail University of Meknes, P.O. Box 509, Boutalamine 52000, Errachidia, Morocco.}
\curraddr{}
\email{m.mehdaoui@edu.umi.ac.ma}
\subjclass[2010]{92B05, 92D30, 60H30, 60Hxx}
\keywords{Epidemic model, Stochastic partial differential equations, Lévy noise, Numerical simulations}
\date{}
\date{}
\begin{document}
	\setstretch{1}
	\maketitle 
\begin{abstract}
	This paper provides a first attempt to incorporate the massive discontinuous changes in the spatio-temporal dynamics of epidemics. Namely, we propose an extended class of epidemic models, governed by coupled stochastic semilinear partial differential equations, driven by pure-jump Lévy noise. Based on the considered type of incidence functions, by virtue of semi-group theory, a truncation technique and Banach fixed point theorem, we prove the existence and pathwise uniqueness of mild solutions, depending continuously on the initial datum. Moreover, by means of a regularization technique, based on the resolvent operator, we acquire that mild solutions can be approximated by a suitable converging sequence of strong solutions. With this result at hand, for positive initial states, we derive the almost-sure positiveness of the obtained solutions. Finally, we present the outcome of several numerical simulations, in order to exhibit the effect of the considered type of stochastic noise, in comparison to Gaussian noise, which has been used in the previous literature. Our established results lay the ground-work for investigating other problems associated with the new proposed class of  epidemic models, such as asymptotic behavior analyses, optimal control as well as identification problems, which primarily rely on the existence and uniqueness of biologically feasible solutions.
\end{abstract}
		\section{Introduction and motivation}
		\label{intro}
		As a scientific tool, mathematical modeling of infectious diseases has attracted the attention of several researchers throughout the decades. Primarily, due to its capacity of using several mathematical objects, in order to convert the biological real problem into a mathematical one, which once solved, through a deductive reasoning, provides valuable information that can be interpreted and analyzed, leading to the better understanding and identification of different factors altering the evolution of a given disease within a studied population. In this framework, to describe the propagation of the Plague, compartmental modeling was introduced in 1927 by Kermack and McKendrick \cite{kermack1927contribution}. The primal idea of their approach was to divide the population under study into three compartments, denoted as $S(t)$, $I(t)$ and $R(t)$, which stand, at a given time $t>0$, for the individuals susceptible to the disease, those infected with it, and last, those who recover or die from the disease and are thus removed from the studied population. Then, by introducing two positive parameters $\beta$ and $\gamma,$ which stand for the disease transmission and recovery rates, respectively, they proposed a model which is governed by the following system of coupled nonlinear ordinary differential equations:
		\begin{equation}\label{eq:sirode}
			\begin{cases}
				\begin{split}
					&\dfrac{dS(t)}{dt}=-\beta S(t) I(t),\\[1.5ex]
					&\dfrac{dI(t)}{dt}=\beta S(t) I(t)-\gamma I(t),\\[1.5ex]
					&\dfrac{dR(t)}{dt}=\gamma I(t).
				\end{split}
			\end{cases}
		\end{equation}
		
		Compartmental epidemic models which are governed by coupled nonlinear ordinary differential equations, follow a deterministic approach which assumes that the susceptible, infected and recovered populations' densities, are completely and surely determined, provided that the parameters and initial data are known. However, as it has been demonstrated by May \cite{may2019stability} and Mao \emph{et al.}  \cite{mao2002environmental}, the random fluctuations play a major role in the dynamics of ecological and epidemiological systems. Taking this fact into account, Jiang \emph{et al.} \cite{jiang2011asymptotic} extended the standard deterministic SIR model by taking the random fluctuations into account and by introducing three additional positive parameters $\Lambda, \mu,$ and $\epsilon$, standing for the birth rate, natural death rate and the rate of death caused by the disease. Namely, the authors considered the following stochastic model:
		\begin{equation}\label{eq:sirsde}
			\begin{cases}
				\begin{split}
					&{dS(t)}=(\Lambda-\beta S(t) I(t)-\mu S(t))dt+\sigma_1 S(t)dB_1(t),\\[1.6ex]
					&{dI(t)}=(\beta S(t) I(t)-(\gamma+\mu+\epsilon)I(t))dt+\sigma_2 I(t) dB_2(t),\\[1.6ex]
					&{dR(t)}=(\gamma I(t)-\mu R(t))dt+\sigma_3 R(t) dB_3(t),
				\end{split}
			\end{cases}
		\end{equation}
		where $(B_i(t))_{i \in \{1,2,3\}}$ are independent Brownian motions with corresponding noise intensities $(\sigma_i^2)_{i \in \{1,2,3\}}$.

		The sudden severe environmental changes, which are exhibited for instance by volcanoes, hurricanes and tornadoes, may cause the discontinuity of solutions. In this case, Gaussian noise is no longer suitable. To overcome this limitation, Zhang and Wang \cite{zhang2013stochastic} extended Model \eqref{eq:sirsde} by considering, in addition to multiplicative Gaussian noise, a Lévy-type jump process, as follows:
		\begin{equation}\label{eq:sirsdelevy}
			\begin{cases}
				\begin{split}
					&\displaystyle{dS(t)}=(\Lambda-\beta S(t) I(t)-\mu S(t))dt+\sigma_1 S(t)dB_1(t)+\int_{\mathbb{Z}} C_1(z) S(t-) \tilde{N}(dt,dz),\\
					&{dI(t)}=(\beta S(t) I(t)-(\gamma+\mu+\epsilon)I(t))dt+\sigma_2 I(t) dB_2(t)+\int_{\mathbb{Z}} C_2(z) I(t-) \tilde{N}(dt,dz),\\
					&{dR(t)}=(\gamma I(t)-\mu R(t))dt+\sigma_3 R(t) dB_3(t)+\int_{\mathbb{Z}} C_3(z) R(t-) \tilde{N}(dt,dz),
				\end{split}
			\end{cases}
		\end{equation}
		where $S(t-)$, $I(t-)$ and $R(t-)$ stand for the right limits of $S(t),$ $I(t)$ and $R(t),$ respectively. $\tilde{N}(dt,dz)$ is the compensated Poisson process, with characteristic measure $\nu$ defined on a measurable subset $\mathbb{Z}\subset [0,+\infty),$ such that $\nu(\mathbb{Z})<+ \infty$. Additionally, $C_i(z)>-1$ for $i \in \{1,2,3\}.$

		For infectious diseases, the movements of individuals within the population contribute to a great extent. Hence, time-dependent epidemic models are no longer qualified due to omitting the spatial factor. Taking this fact into account, Webb \cite{webb1981reaction} considered in a one spatial dimension, the following adapted SIR model: 
		\begin{equation}\label{eq:sirpde}
			\begin{cases}
				\begin{split}
					&\partial_t S(x,t)-\partial_{xx} S(x,t)=-\beta S(x,t) I(x,t), &\quad -L<x<L, \quad t>0,\\[1.5ex]
					&\partial_t I(x,t)-\partial_{xx} I(x,t)=\beta S(x,t) I(x,t)-\gamma I(x,t), &\quad -L<x<L, \quad t>0,\\[1.5ex]
					&\partial_t R(x,t)-\partial_{xx} R(x,t)=\gamma I(x,t), &\quad -L<x<L, \quad t>0,
				\end{split}
			\end{cases}
		\end{equation}
		equipped with the positive initial conditions: 
		\begin{equation}\label{bb1}
			S(x,0)\geq 0, \quad I(x,0)\geq 0 \quad \text{and}\quad 	R(x,0)\geq 0, \quad -L<x<L,
		\end{equation}
		and the following homogeneous Neumann boundary conditions:
		\begin{equation}\label{eq:ii1}
			\partial_x S(\pm L,t)= \partial_x I(\pm L,t)=\partial_x R(\pm L,t)=0, \quad t>0,
		\end{equation}
		where $L>0$, $\partial_x$ and $\partial_t$ stand for the first-order partial derivatives with respect to the spatial and time variables, respectively, while $\partial_{xx}$ stands for the second-order derivative with respect to the spatial variable.

		In order to incorporate the spatial factor in stochastic epidemic models, Nguyen \emph{et al.} \cite{nguyen2019stochastic} considered the following SIS model:
		\begin{equation}\label{eq:spdesir}
			\begin{cases}
				\begin{split}
					&\begin{split}
						d S(t, x)&=\big[k_1 \Delta S(t, x)+\Lambda(x)-\mu_1(x) S(t, x)-\frac{\alpha(x) S(t, x) I(t, x)}{S(t, x)+I(t, x)}\\[1.5ex]
						&+\gamma(x) I(t,x)\big] d t +S(t, x) d W_1(t, x), 
					\end{split} &\; \text {in}\; \mathbb{R}^{+} \times \mathcal{O},\\[1.5ex]
					& \begin{split} 
						dI(t, x)&=\big[k_2 \Delta I(t, x)-\mu_2(x) I(t, x)+\frac{\alpha(x) S(t, x) I(t, x)}{S(t, x)+I(t, x)}\\
						&-\gamma(x) I(t,x)\big] dt+I(t, x) d W_2(t, x), 
					\end{split} &\; \text {in}\; \mathbb{R}^{+} \times \mathcal{O},
				\end{split}
			\end{cases}
		\end{equation}
		equipped with the following homogeneous Neumann boundary conditions: 
		\begin{equation}\label{eq:bb2}
			\partial_\nu S(t, x)=\partial_\nu I(t, x)=0, \quad (t,x)\in \mathbb{R}^{+} \times \partial \mathcal{O},
		\end{equation}
		and the initial conditions:
		\begin{equation}\label{eq:ii2}
			S(0, x)\geq0 \quad \text{and}\quad I(0,x)\geq0, \quad x\in \mathcal{O},
		\end{equation}
		where $\mathcal{O} \subset \mathbb{R}^n (n\geq 1)$ is an open bounded set with boundary $\partial \mathcal{O}$ of class $C^2$, $\partial_\nu$ is the outward normal derivative on $\partial \mathcal{O}$ while $W_1(t, x)$ and $W_2(t, x)$ are $L^2(\mathcal{O},\mathbb{R})$-valued Wiener processes. For a detailed biological signification of the remaining positive parameters in the stochastic system \eqref{eq:spdesir}, we refer to \cite{nguyen2019stochastic}.

		We outline that due to Lyapunov method developed by Mao \cite{mao2002environmental}, the approach used to establish well-posedness results for a variety of stochastic epidemic models, governed by coupled stochastic nonlinear differential equations, is now well-understood and the main challenge resides in addressing other problems such as extinction, persistence, stability, stationary distribution and periodic solutions (see e.g. \cite{mehdaoui2022dynamical,jiang2011asymptotic,mehdaoui2023analysisstochastic,khan2022stochastic}). However, in the case of epidemic models governed by coupled stochastic semilinear partial differential equations, the existence of pathwise unique biologically feasible solutions, alone, can be challenging, since the aforementioned approach fails. This is due to the fact that it is based on the existence and uniqueness of local strong solutions, which are not guaranteed. Furthermore, it requires the application of the infinite dimensional Itô rule \cite[Theorem 3.8]{curtain1970ito}, which is not directly applicable to mild solutions.

		In the context of Gaussian noise, the above limitations have been addressed by Nguyen \emph{et al.} \cite{nguyen2019stochastic} in their pioneering work, laying the ground-work for some recently-extended epidemic models developed by Nguyen \emph{et al.} \cite{nguyen2020analysis}, Hu \emph{et al.} \cite{hu2022analysis} and Shao \emph{et al.} \cite{shao2022necessary}. However, in all the aforementioned models, the chosen type of noise is suitable to model only small fluctuations, and it cannot be used to capture sudden environmental changes with discontinuous arrivals. Therefore, an interesting question is to investigate the mathematical well-posedness and biological feasibility of a new class of spatio-temporal stochastic epidemic models, in which Gaussian noise is replaced with a suitable stochastic jump process,  describing such discontinuous random changes. It is worth mentioning that a few researchers have addressed the question of extending deterministic partial differential equations, describing phenomena arising in other fields such as physics and engineering, to the stochastic case, driven by pure-jump Lévy processes. In this regard, we mention the pioneering results of Brze{\'z}niak and Zhu \cite{zhu2016nonlinear}, for the extension of the deterministic beam model, Jiang \emph{et al.}  \cite{jiang2012stochastic} for the extension of the deterministic wave equation, Liang and Gao \cite{liang2014stochastic}, for the extension of the deterministic wave equation with memory, and Bessaih \emph{et al.} \cite{bessaih2015strong} for the extension of equations arising in hydrodynamics.

		Taking the above discussion into account, the focal point of this paper is to address the questions of mathematical well-posedness and biological feasibility for a new class of stochastic epidemic models, governed by coupled stochastic semilinear partial differential equations, which are driven by pure-jump Lévy noise. To the very best of our knowledge, this is the first paper to rigorously address these questions and the only existing results in the literature, when it comes to the stochastic case, are the ones where Gaussian noise has been used \cite{nguyen2019stochastic,nguyen2020analysis,shao2022necessary,hu2022analysis}.

		Let $(\Omega,(\mathcal{F}_t)_{t \geq 0},\mathbb{P})$  be a probability space, with a filtration $\mathbb{F}:=(\mathcal{F}_t)_{t \geq 0}$ satisfying the usual conditions. The model in question is expressed as follows:
		\begin{equation}\label{eq:spdesirlevy}
			\begin{cases}
				\begin{split}
					&\begin{split}
						d S(t, x)&=\left[d_1 \Delta S(t, x)+\Lambda(x)-\mu(x) S(t, x)-F(S(t,x),I(t,x))\right] dt \\
						&+\int_{\mathbb{Z}}  \mathcal{C}_1(z,x) S(t-,x) \;\tilde{N}(dt,dz), \end{split} &\quad  (t,x)\in \mathbb{R}^{+} \times \mathcal{U},\\[1.3ex]
					& \begin{split} 
						dI(t, x)&=\left[d_2 \Delta I(t, x)-\mu(x) I(t, x)-\gamma(x) I(t, x)+{F(S(t,x),I(t,x))}\right] dt\\[1.3ex]
						&+\int_{\mathbb{Z}}  \mathcal{C}_2(z,x) I(t-,x) \;\tilde{N}(dt,dz),\end{split}
					&\quad  (t,x)\in \mathbb{R}^{+} \times \mathcal{U},\\[1.3ex]
					& \begin{split} 
						dR(t,x)&=\left[d_3 \Delta R(t, x)-\mu(x) R(t, x)+{\gamma(x) I(t, x)}\right] dt\\[0.5ex]
						&+\int_{\mathbb{Z}}  \mathcal{C}_3(z,x) R(t-,x)  \;\tilde{N}(dt,dz),\end{split}
					&\quad  (t,x)\in \mathbb{R}^{+} \times \mathcal{U},
				\end{split}
			\end{cases}
		\end{equation}
		equipped with the following homogeneous Neumann boundary conditions:
		\begin{equation}\label{eq:bound}
			\partial_\nu S(t, x)=\partial_\nu I(t, x)=\partial_\nu R(t, x)=0, \quad (t,x)\in \mathbb{R}^{+} \times \partial \mathcal{U},
		\end{equation}
		and the positive initial conditions: 
		\begin{equation}\label{eq:init}
			S(0,x)=S_0(x)\geq 0, \quad I(0,x)=I_0(x)\geq 0, \quad \text{and} \quad R(0,x)=R_0(x)\geq 0,\quad x\in \mathcal{U},
		\end{equation}
		where $\mathcal{U} \subset \mathbb{R}^n (n\geq 1)$ is an open bounded set with smooth boundary $\partial \mathcal{U}$ of class $C^2,$ $\Delta$ denotes the Laplace operator with respect to the spatial variable and $\partial_\nu$ denotes the outward normal derivative on $\partial \mathcal{U}.$

		We mention that the boundary condition \eqref{eq:bound} has been used by several authors for a wide range of  deterministic epidemic models (see e.g. \cite{mehdaoui2022optimal,song2019spatial,zhou2019optimal,mehdaouianalysis} and the references therein). From the biological point of view, this condition is suitable when the population under study is assumed to be at lock-down. That is, individuals are not allowed to enter nor cross the boundary of the spatial domain.  On the other hand, the motivation behind the initial conditions \eqref{eq:init} is purely biological, since we are dealing with populations' densities. For more details on this subject, we refer the reader to the interesting monograph given in \cite{okubo2001diffusion}.

		In this paper, the discontinuous random changes are mathematically modeled by a pure-jump Lévy process. Namely, given a measure space $(\mathbb{Z},\mathcal{Z},\nu),$ we denote by $\tilde{N},$ the compensated Poisson random measure, defined by:
		$$
		\tilde{N}((0, t] \times \mathbb{S}):=N((0, t] \times \mathbb{S})-t \nu(\mathbb{S}), \quad \forall t> 0, \quad \forall \mathbb{S} \in \mathcal{Z},
		$$
		where $N$ is a Poisson random measure, with a corresponding intensity measure $\nu(.),$ which is assumed to be $\sigma$-finite. For a detailed terminology on compensated Poisson random measures, we refer the reader to the monographs given in \cite{ken1999levy,applebaum2009levy}.

		In System \eqref{eq:spdesirlevy}, $d_1, d_2$ and $d_3$ are the positive spatial diffusion rates of the susceptible, infected and recovered individuals, $\Lambda,$ $\mu,$ and $\gamma$ are essentially bounded positive functions, depending on the spatial variable, and their biological signification is the same one considered for Model \eqref{eq:sirsde}. For $i \in \{1,2,3\},$ the functions $
		\mathcal{C}_i\; : \mathbb{Z} \times \mathcal{U} \times \Omega \longrightarrow \mathbb{R},$ model  the intensities of the jump noise and are assumed to be $(\mathcal{Z}\times \mathcal{F}_t)$-measurable and essentially bounded with respect to $(z,x) \in \mathbb{Z}\times \mathcal{U}$.

		The disease incidence rate is mathematically modeled by the function $F,$ whose expression depends on the incorporated biological characteristics. In this paper, we distinguish two types, based on the following mathematical properties:
		\begin{enumerate}[label={(\textbf{P}\arabic*)}]
			\item $F:[0,+\infty)^2\longrightarrow [0,+\infty)$ is a globally Lipschitz function. That is, \label{P1}
			$$
			\exists L_F>0, \;\forall s_1,i_1,s_2,i_2 \in [0,+\infty), \quad \vert F(s_1,i_1)-F(s_2,i_2) \vert \leq L_F{\left(\vert s_1-s_2 \vert^2+\vert i_1-i_2\vert^2\right)}^{\frac{1}{2}}.
			$$
			\item $F: [0,+\infty)^2\longrightarrow [0,+\infty)$ is a locally Lipschitz  function, satisfying the following growth condition:
			$$
			\exists C_F>0, \quad F(s,i) \leq C_F s.  
			$$\label{P3}
		\end{enumerate}
		\begin{rem}
			Below are some examples corresponding to incidence functions satisfying \ref{P1} and \ref{P3}, respectively:
			\begin{itemize}
				\item \textbf{Standard incidence rate \ref{P1}:} let $x\in \mathcal{U}$ and $\mathbf{D}:=\{(s,i)\in [0,+\infty)^2|\; s+i\neq0\}.$  Then, define
				\begin{equation}\label{eq:standard}
					F(s,i):=\begin{cases}
						\begin{split}
							\beta(x){\dfrac{s i}{s+i}}, \quad &\text{if}\; (s,i) \in \mathbf{D},\\[2ex]
							0, \quad &\text{otherwise}.
						\end{split}
					\end{cases}
				\end{equation}
				\item \textbf{Holling-type and Crowley-Martin incidence rates \ref{P3}:} let $x\in \mathcal{U}.$ Then, define
				\begin{equation}\label{eq:saturated}
					F(s,i)=\beta(x)\dfrac{s i}{1+ai+bsi}.
				\end{equation}
			\end{itemize}
			Here, $\beta$ is an essentially bounded positive function, which biologically stands for the disease transmission rate, while $a$ and $b$ are non-negative real constants satisfying $
			a+b\neq 0,$ and
			account for the saturation effect in the disease transmission, which is caused by the protective measures taken by the susceptible population. For more details, we refer for instance to \cite{guan2022bifurcation,holling1959some,crowley1989functional}.
		\end{rem}

		The rest of this paper is arranged as follows. In Section \ref{s2}, we outline some preliminary definitions and provide an abstract formulation of Model \eqref{eq:spdesirlevy}-\eqref{eq:init}. In Section \ref{s3}, we address its mathematical well-posedness, while in Section \ref{section4} we deal with its biological feasibility. On the other hand, in Section \ref{section5}, we provide the outcome of the conducted numerical simulations illustrating the effect of the considered type of stochastic noise, on the spatio-temporal dynamics. At last, in Section \ref{section6}, we state some conclusions,  discuss some possible extensions, and briefly outline some open problems.
		\section{Preliminaries and abstract formulation}\label{s2} 
		Dealing with the mathematical well-posedness and the biological feasibility of Model \eqref{eq:spdesirlevy}-\eqref{eq:init} requires its reformulation in an abstract compact form, which is expressed by a stochastic semilinear evolution equation.

		For $p \in (1,\infty],$ we begin by setting $$\mathbb{L}^p(\mathcal{U}):=(L^p(\mathcal{U}))^3 \quad  \text{and}\quad \mathbb{W}^{2,p}(\mathcal{U}):=(W^{2,p}(\mathcal{U}))^3,$$  
		as the standard product Lebesgue and Sobolev spaces (see e.g. \cite[Chapter 5, p. 245-247]{evans2022partial}).
		
		For the particular case $p=2,$ we set 
		$$
		\mathbb{H}^2(\mathcal{U}):=\mathbb{W}^{2,2}(\mathcal{U}) \quad \text{and} \quad \mathbb{H}:=\mathbb{L}^2(\mathcal{U}).
		$$
		The Hilbert space $\mathbb{H}$ is equipped with its scalar-product-induced norm:
		$$
		\Vert u \Vert_{\mathbb{H}}^2:=\langle u,u \rangle_{\mathbb{H} \times \mathbb{H}}:=\sum_{i=1}^3 \langle u_i,u_i \rangle_{L^2(\mathcal{U}) \times L^2(\mathcal{U})}:=\sum_{i=1}^3 \int_\mathcal{U} u_i^2(x)\;dx.
		$$
		In order to unify the notation, overall throughout the paper, we adopt the following setting: $$u\overset{\Delta}{=}(u_1,u_2,u_3) \overset{\Delta}{=}(S,I,R).$$ 
		Now, consider the following linear operator: 
		\begin{align*}
			\mathcal{A} :\; &\mathcal{D}(\mathcal{A}) \subset \mathbb{L}^p(\mathcal{U}) \longrightarrow \mathbb{L}^p(\mathcal{U}) \\
			&u \mapsto (d_1 \Delta u_1,d_2 \Delta u_2,d_3 \Delta u_3), 
		\end{align*}
		defined on the domain:
		\begin{equation}\label{domneum}
			\mathcal{D}(\mathcal{A}):=\{ u \in \mathbb{W}^{2,p}(\mathcal{U})|\quad \partial_\nu u_i=0,\; \text{on}\; \partial \mathcal{U}, \; \forall i \in \{1,2,3\}\},
		\end{equation}
		equipped with the following graph norm: 
		$$\Vert u \Vert_{\mathcal{D}(\mathcal{A})}:=\Vert \mathcal{A} u \Vert_{\mathbb{L}^p(\mathcal{U})}+\Vert  u \Vert_{\mathbb{L}^p(\mathcal{U})}, \quad \forall u \in \mathcal{D}(\mathcal{A}).$$
		As a direct consequence of Hille-Yosida theorem \cite[Theorem 3.1]{pazy2012semigroups}, it is known that the operator $\mathcal{A}$ generates in $\mathbb{L}^p(\mathcal{U})\;(p \in (1,\infty]),$ an analytic $C_0$-semigroup $(\mathcal{S}_p(t))_{t\geq 0}$. Moreover, it holds that (see e.g. \cite[Chapter 4, p. 109]{cerrai2001second})
		$$
		\mathcal{S}_p(t)u=\mathcal{S}_q(t)u, \quad \forall u \in \mathbb{L}^{p}(\mathcal{U}) \cap  \mathbb{L}^{q}(\mathcal{U}), \quad \forall p,q \in (1,\infty].
		$$
		Additionally, the operator 
		\begin{equation}\label{eq:regula}
			\begin{split}
				(\mathbb{L}^p(\mathcal{U}), &\Vert.\Vert_{\mathbb{L}^p(\mathcal{U})}) \longrightarrow (\mathcal{D}(\mathcal{A}),\Vert. \Vert_{\mathcal{D}(\mathcal{A})})\\
				& u \mapsto \mathcal{S}_p(t) u,
			\end{split}
		\end{equation}
		is bounded for $t>0$ (see e.g. \cite[Chapter 4, p. 109]{cerrai2001second}).
		
		Henceforth, for $p \in (1,\infty],$ the semigroup $(\mathcal{S}_p(t))_{t\geq 0}$ will be simply denoted $(\mathcal{S}(t))_{t \geq 0}$ and we shall omit the explicit notation.

		For $x \in \mathcal{U},$ consider the following functions defined on $\mathbb{R}^3$ by:
		$$
		\begin{cases}
			f_1(s,i,r):=\Lambda(x)-F(s,i)-\mu(x) s,\\
			f_2(s,i,r):=F(s,i)-(\mu(x) +\gamma(x))i,\\
			f_3(s,i,r):=\gamma(x) i - \mu(x) r.
		\end{cases}
		$$
		Additionally, consider the following functions defined on $\mathbb{Z} \times \mathcal{U} \times \mathbb{R}$ by:
		$$
		\begin{cases}
			g_1(z,x,s):=\mathcal{C}_1(z,x) s,\\
			g_2(z,x,i):=\mathcal{C}_2(z,x) i,\\
			g_3(z,x,r):=\mathcal{C}_3(z,x) r.
		\end{cases}
		$$
		Then, define the operator $\mathcal{F}$ by:
		\begin{itemize}
			\item In the case of incidence functions satisfying \ref{P1}:
			\begin{align*}
				\mathcal{F}\;:\; &\mathbb{H}\longrightarrow \mathbb{H}\\
				& u \mapsto \textbf{f}(u).
			\end{align*}
			\item In the case of incidence functions satisfying \ref{P3}:
			\begin{align*}
				\mathcal{F}\;:\; &\mathbb{L}^\infty(\mathcal{U})\longrightarrow \mathbb{L}^\infty(\mathcal{U})\\
				& u \mapsto \textbf{f}(u).
			\end{align*}
		\end{itemize}
		Additionally, we define 
		\begin{itemize}
			\item In the case of incidence functions satisfying \ref{P1}:
			\begin{align*}
				\mathcal{G}\;:&\; \mathbb{Z} \times \mathbb{H}\longrightarrow \mathbb{H}\\
				& (z,u) \mapsto (\textbf{g}_1(z,u_1),\textbf{g}_2(z,u_2),\textbf{g}_3(z,u_3)).
			\end{align*}
			\item In the case of incidence functions satisfying \ref{P3}:
			\begin{align*}
				\mathcal{G}\;:&\; \mathbb{Z} \times \mathbb{L}^\infty(\mathcal{U})\longrightarrow \mathbb{L}^\infty(\mathcal{U})\\
				& (z,u) \mapsto (\textbf{g}_1(z,u_1),\textbf{g}_2(z,u_2),\textbf{g}_3(z,u_3)),
			\end{align*}
		\end{itemize}
		where $\forall i \in \{1,2,3\}:$ 
		\begin{align*}
			\textbf{f}_i(u):\;&\mathcal{U}\; \longrightarrow \mathbb{R}\\
			& x \mapsto f_i(u_1(x)\vee 0,u_2(x)\vee 0,u_3(x)\vee 0), 
		\end{align*} 
		\begin{align*}
			\textbf{g}_i(z,u_i):\;&\mathcal{U} \longrightarrow \mathbb{R}\\
			& x \mapsto g_i(z,x,u_i(x)\vee 0),
		\end{align*}
		and 
		$$a\vee b:=\max\{a,b\},\quad \forall a,b \in \mathbb{R}.$$
		With the above setting, it is clear that the  operator $\mathcal{F}$ satisfies the following assertions: 
		\begin{itemize}
			\item In the case of incidence functions satisfying \ref{P1}: the operator $\mathcal{F}$ is globally Lipschitz.
			\item In the case of incidence functions satisfying \ref{P3}: the operator $\mathcal{F}$ is locally Lipschitz and satisfies the following growth condition:
			\begin{equation}\label{eq:growthoper}
				\exists C_{\mathcal{F}}>0, \quad \Vert \mathcal{F}(u) \Vert_{\mathbb{L}^\infty(\mathcal{U})} \leq C_{\mathcal{F}} \Vert u \Vert_{\mathbb{L}^\infty(\mathcal{U})}, \quad \forall u \in \mathbb{L}^\infty(\mathcal{U}).
			\end{equation}
		\end{itemize}
		
		For simplicity, henceforth, we shall denote by $L_\mathcal{F}$ the local/global Lipschitz constant corresponding to the operator $\mathcal{F}$ in both cases.
		
		Let $$u_0\overset{\Delta}{=}(S_0,I_0,R_0).$$ 
		Based on the preceding definitions, Model \eqref{eq:spdesirlevy}-\eqref{eq:init} can be equivalently expressed in the following abstract compact form: 
		\begin{equation}\label{eq:abs}
			\begin{cases}
				\displaystyle du(t)=\left(\mathcal{A}u(t)+\mathcal{F}(u(t))\right)dt+ \int_{\mathbb{Z}} \mathcal{G}(z,x,u(t-)) \tilde{N}(dt,dz),\\
				u(0)=u_0.
			\end{cases}
		\end{equation}

		Next, we proceed to rigorously precise the types of solutions to Problem \eqref{eq:abs}, that will be investigated in the sequel. To this end, we begin by stating some basic terminology.
		
		First, given $T>0$, we define the $\sigma$-field of progressively measurable subsets by:
		$$
		\mathcal{B}_\mathcal{F}:=\{A \subset [0,T] \times \Omega, \quad A \cup ([0,t] \times \Omega) \in \mathcal{B}([0,t]) \times \mathcal{F}_t,\quad \forall t \in [0,T]\},
		$$
		where $\mathcal{B}([0,t])$ stands for the Borel $\sigma$-algebra generated by 
		intervals of the form $([0,t])_{t\geq0}$.\\\\
		Secondly, we denote by $\mathbb{P}$ the $\sigma$-field generated by real-valued $\mathcal{F}_t$-adapted processes $u:\; \Omega \times \mathbb{R}^+ \longrightarrow \mathbb{R}$, which are  left-continuous, and by $\overline{\mathbb{P}}$ the $\sigma$-field generated by real-valued functions $u:\; \Omega \times \mathbb{R}^+ \times \mathbb{Z} \longrightarrow \mathbb{R}$, satisfying the following assertions:
		\begin{itemize}
			\item For every $t>0$, the function $u_t:\; \Omega \times \mathbb{Z} \ni(\omega, z) \mapsto u_t(\omega,z):=u(t,\omega,z) \in \mathbb{R}$ is $(\mathcal{F}_t \otimes \mathcal{Z}  /\mathcal{B}(\mathbb{R}))$-measurable.
			\item For every $(\omega, z) \in \Omega \times \mathbb{Z}$,  the function $u_{\omega,z}:\; \mathbb{R}_{+} \ni t \mapsto u_{\omega,z}(t):=u(t,\omega,z) \in \mathbb{R}$ is left-continuous.
		\end{itemize}
		Finally, we denote by $\mathbb{B}(\mathcal{X}),$ the $\sigma$-field generated by open sets in a Banach space $\mathcal{X}$.
		\begin{defi}
			A stochastic process $u: \;\mathbb{R}^+ \times \Omega \longrightarrow \mathcal{X}$ is said to be progressively measurable, if it is $(B([0,t])\times \mathcal{F}_t/\mathbb{B(\mathcal{X})})$-measurable for all $t\geq 0$. 
		\end{defi}
		\begin{defi}
			A stochastic process $u : \;\mathbb{R}^+ \times \Omega \longrightarrow \mathcal{X},$ is said to be predictable, if the mapping $$u_t:\; \Omega \times \mathbb{Z} \ni(\omega, z) \mapsto u_t(\omega,z):=u(t,\omega,z) \in \mathcal{X},$$ is $(\mathbb{P}/\mathbb{B}(\mathcal{X}))$-measurable.
		\end{defi}
		\begin{defi}
			A mapping $u:\mathbb{R}^+ \times \Omega \times \mathbb{Z} \longrightarrow \mathcal{X}$ is said to be $\mathbb{F}$-predictable (or simply predictable) if it is $(\overline{\mathbb{P}}/\mathbb{B}(\mathcal{X}))$-measurable.  
		\end{defi}

		For $T>0,$  we define $\mathcal{M}_{\mathbb{F}}^T$ as the space of progressively measurable processes
		$$u : \;\mathbb{R}^+ \times \Omega \longrightarrow \mathcal{D}(\mathcal{A}),$$
		such that 
		$$ \displaystyle
		\mathbb{E} \int_{0}^T \Vert u(t) \Vert_{\mathcal{D}(\mathcal{A})}^2\; dt < + \infty.
		$$ 
		On the other hand, we define $\mathcal{M}_{\mathbb{Z}}^T$ as the space of progressively measurable predictable processes  $$u : \;\mathbb{R}^+ \times \Omega \times \mathbb{Z} \longrightarrow {\mathcal{D}(\mathcal{A})},$$
		such that 
		$$
		\displaystyle
		\mathbb{E} \int_{0}^T \int_{\mathbb{Z}} \Vert u(t,z) \Vert_{\mathcal{D}(\mathcal{A})}^2\;\nu(dz) dt < + \infty.
		$$
		Here and overall throughout the paper, $\mathbb{E}$ will stand for the mathematical expectation.

		\begin{defi}\label{eq:defmild}
			Given a time-horizon $(0,T),$ we define a global mild solution to Problem \eqref{eq:abs} as an $\mathbb{H}$-valued stochastic process $(u(t))_{0\leq t\leq T}$, which is $\mathcal{F}_t$-adapted, satisfies the càd-làg property and for which the following assertions are fulfilled:
			\begin{enumerate}
				\item $\mathcal{S}(t-.)\mathcal{F}(u(.)) \in \mathcal{M}_{\mathbb{F}}^t$\quad and $\quad \mathcal{S}(t-.)  \mathcal{G}(z,x,u(.)) \in \mathcal{M}_{\mathbb{Z}}^t, \quad \forall 0< t\leq T.$
				\item The following equation: $$
				\displaystyle u(t)=\mathcal{S}(t) u_0 + \int_0^t \mathcal{S}(t-s) \mathcal{F}(u(s)) ds+ \int_0^t \int_{\mathbb{Z}} \mathcal{S}(t-s) \mathcal{G}(z,x,u(s-)) \tilde{N}(ds,dz),\\
				$$
				holds $\forall 0\leq t\leq T$, $\mathbb{P}$-almost surely ($\mathbb{P}$-a.s).
			\end{enumerate}

			The global mild solution $u$ is said to be pathwise unique, if for every other mild solution $v$, it holds that 
			\begin{equation}\label{eq:pathuniquenessdef}
				\mathbb{P}(u(t)=v(t),\; \forall 0\leq t\leq T)=1.
			\end{equation} 
		\end{defi}
		\begin{defi}\label{eq:strong}
			Given a time-horizon $(0,T),$ we define a global strong solution to Problem \eqref{eq:abs} as a $\mathcal{D}(\mathcal{A})$-valued stochastic process $(u(t))_{0\leq t\leq T}$, which is $\mathcal{F}_t$-adapted, satisfies the càd-làg property and for which the following assertions are fulfilled:
			\begin{enumerate}
				\item $\mathcal{S}(t-.)\mathcal{F}(u(.)) \in \mathcal{M}_{\mathbb{F}}^t$ \quad and $\quad \mathcal{S}(t-.)  \mathcal{G}(z,x,u(.)) \in \mathcal{M}_{\mathbb{Z}}^t, \quad \forall 0< t\leq T.$
				\item The following equation: 
				$$
				\displaystyle u(t)=u_0 +\int_0^t \mathcal{A} u(s) ds+ \int_0^t  \mathcal{F}(u(s)) ds+ \int_0^t \int_{\mathbb{Z}}  \mathcal{G}(z,x,u(s-)) \tilde{N}(ds,dz),\\
				$$
				holds $\forall 0\leq t\leq T$, $\mathbb{P}$-a.s.\\
			\end{enumerate}
			The global strong solution	is said to be pathwise unique if Property \eqref{eq:pathuniquenessdef} is satisfied, given two strong solutions $u$ and $v$.
		\end{defi}
		\section{Mathematical well-posedness of Model \eqref{eq:spdesirlevy}-\eqref{eq:init}}\label{s3}
		Given a time-horizon $(0,T),$ an initial state and input parameters, the mathematical well-posedness of Model \eqref{eq:spdesirlevy}-\eqref{eq:init} relies on the existence of a pathwise unique mild solution to Problem \eqref{eq:abs}, depending continuously on the initial state.

		\subsection{Well-posedness results in the case of incidence functions satisfying \ref{P1}}
		Throughout this subsection, the incidence function is assumed to satisfy \ref{P1} while the initial condition $u_0$ is assumed to belong to the space $\mathbb{H}$. The main tool that we employ to address the existence and pathwise uniqueness of a global mild solution in this case, is Banach fixed point theorem. The idea is to consider a mapping, defined on a suitable Banach space, and whose fixed point characterizes the mild solution.

		Consider the following Banach space: 
		$$
		\mathbf{E}_{T,\lambda}:=\{u: \mathbb{R}^+ \times \Omega \longrightarrow \mathcal{D}(\mathcal{A}), \; \text{progressively measurable, such that}\; \underset{0\leq t \leq T}{\sup} {\mathbb{E} \Vert u(t) \Vert_{\mathcal{D}(\mathcal{A})}}< +\infty\},
		$$
		equipped with the following norm: 
		\begin{equation}\label{norm}
			\Vert u \Vert_{\mathbf{E}_{T,\lambda}}^2:=\underset{0\leq t \leq T}{\sup} e^{-\lambda t} {\mathbb{E} \Vert u(t) \Vert_{\mathcal{D}(\mathcal{A})}^2}, \quad \forall u \in \mathbf{E}_{T,\lambda},
		\end{equation}
		where $\lambda>0$ will be chosen thereafter accordingly, given that the norms $\left(\Vert . \Vert_{\mathbf{E}_{T,\lambda}}\right)_{\lambda \geq 0}$ are equivalent. 
		
		It is clear that the mild solution to Problem \eqref{eq:abs} is characterized by the fixed point of the following mapping:
		\begin{equation*}
			\begin{split}
			\mathcal{L} :\; &(\mathbf{E}_{T,\lambda},\Vert.\Vert_{\mathbf{E}_{T,\lambda}}) \longrightarrow (\mathbf{E}_{T,\lambda},\Vert.\Vert_{\mathbf{E}_{T,\lambda}})\\
			&u \mapsto \mathcal{L}(u):=\mathcal{S}(.)u_0+\int_0^. \mathcal{S}(.-s) \mathcal{F}(u(s)) ds+\int_0^. \int_{\mathbb{Z}} \mathcal{S}(.-s)\mathcal{G}(z,x,u(s-)) \tilde{N}(ds,dz).
			\end{split}
		\end{equation*}
		Before proceeding to prove that the mapping  $\mathcal{L}$ has a unique fixed point, we first establish that it is well-defined, which is assured by the following lemma:
		\begin{lem}\label{eq:firstlemma}
			Assume that $(u(t))_{t\geq0}$ is progressively measurable. Then, the $\mathcal{D}(\mathcal{A})$-valued stochastic processes:\\
			$$\displaystyle \left(\int_0^t \mathcal{S}(t-s)\mathcal{F}(u(s))ds\right)_{t\geq0}$$ and
			$$\displaystyle\left(\int_0^t \int_{\mathbb{Z}} \mathcal{S}(t-s)\mathcal{G}(z,x,u(s-)) \tilde{N}(ds,dz)\right)_{t\geq0},$$ 
			are well-defined and progressively measurable.
		\end{lem}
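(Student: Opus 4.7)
The plan is to split the lemma into the analysis of the two processes and treat each in three stages: (i) measurability of the integrand as a $\mathcal{D}(\mathcal{A})$-valued map, (ii) Bochner (respectively $L^{2}$-compensated-Poisson) integrability, and (iii) progressive measurability of the resulting process in $t$. The regularizing property of the analytic semigroup recorded in \eqref{eq:regula} will be the central tool throughout, since the integrands only a priori take values in $\mathbb{H}$ rather than in $\mathcal{D}(\mathcal{A})$.

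For the deterministic convolution, I would first use \ref{P1} to see that $\mathcal{F}\colon\mathbb{H}\to\mathbb{H}$ is Lipschitz, hence continuous, so that $s\mapsto \mathcal{F}(u(s))$ inherits the progressive measurability of $u$ as an $\mathbb{H}$-valued process. Fixing $t>0$, the boundedness of $\mathcal{S}(t-s)\colon\mathbb{H}\to\mathcal{D}(\mathcal{A})$ for $s<t$ from \eqref{eq:regula}, combined with the strong continuity of $r\mapsto \mathcal{S}(r)h$ in $\mathcal{D}(\mathcal{A})$ for fixed $h\in\mathbb{H}$, then upgrades this to progressive measurability of $(s,\omega)\mapsto \mathcal{S}(t-s)\mathcal{F}(u(s,\omega))$ as a $\mathcal{D}(\mathcal{A})$-valued process on $[0,t]\times\Omega$. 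Bochner integrability would follow from the Lipschitz-induced affine bound $\|\mathcal{F}(u(s))\|_{\mathbb{H}}\leq L_{\mathcal{F}}\|u(s)\|_{\mathbb{H}}+\|\mathcal{F}(0)\|_{\mathbb{H}}$ combined with the analytic smoothing estimate for $\|\mathcal{S}(t-s)\|_{\mathcal{L}(\mathbb{H},\mathcal{D}(\mathcal{A}))}$. Finally, the process $t\mapsto\int_{0}^{t}\mathcal{S}(t-s)\mathcal{F}(u(s))\,ds$ is $\mathcal{F}_{t}$-adapted and right-continuous in $\mathcal{D}(\mathcal{A})$, and hence progressively measurable.

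For the compensated Poisson convolution the strategy is analogous: the $(\mathcal{Z}\otimes\mathcal{F}_{t})$-measurability and essential boundedness of the kernels $\mathcal{C}_{i}$ first yield $\overline{\mathbb{P}}$-predictability of $(s,\omega,z)\mapsto \mathcal{G}(z,x,u(s-,\omega))$ as an $\mathbb{H}$-valued map, after which applying $\mathcal{S}(t-s)$ promotes the full integrand to a $\mathcal{D}(\mathcal{A})$-valued predictable map via \eqref{eq:regula}. The required $L^{2}$ bound
\begin{equation*}
\mathbb{E}\int_{0}^{t}\int_{\mathbb{Z}}\|\mathcal{S}(t-s)\mathcal{G}(z,x,u(s-))\|_{\mathcal{D}(\mathcal{A})}^{2}\,\nu(dz)\,ds<+\infty
\end{equation*}
would then be obtained from essential boundedness of $\mathcal{C}_{i}$, the smoothing estimate, and the a priori bound $\sup_{0\leq s\leq T}\mathbb{E}\|u(s)\|_{\mathcal{D}(\mathcal{A})}^{2}<+\infty$ inherited from $\mathbf{E}_{T,\lambda}$, after which the standard construction of Hilbert-space-valued stochastic integrals against compensated Poisson random measures produces a $\mathcal{D}(\mathcal{A})$-valued c\`ad-l\`ag $\mathcal{F}_{t}$-adapted, and hence progressively measurable, process.

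The main obstacle I expect is controlling the blow-up of $\|\mathcal{S}(r)\|_{\mathcal{L}(\mathbb{H},\mathcal{D}(\mathcal{A}))}$ as $r\downarrow 0$, since the analytic-semigroup smoothing bound behaves like $r^{-1}$ near zero and is only borderline integrable. A clean treatment would most likely split each integral into a regular region $s<t-\varepsilon$ and a singular tail $s\in[t-\varepsilon,t]$ handled by monotone convergence combined with the square-integrability of $u$, or equivalently route the argument through a resolvent regularization of $\mathcal{A}$ of the type the paper invokes in Section \ref{section4} for the positivity analysis. Navigating this smoothing singularity cleanly is where I expect the real work of the proof to lie.
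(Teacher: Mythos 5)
Your overall three-stage architecture (measurability of the integrand, integrability, progressive measurability of the convolution) matches the shape of the problem, and your adaptedness/predictability part (continuity of $\mathcal{F}$, strong continuity of the semigroup, Fubini, predictability of the jump integrand) coincides with the paper's. However, the step you yourself flag as ``where the real work lies'' --- controlling the blow-up of $\Vert \mathcal{S}(r)\Vert_{\mathcal{L}(\mathbb{H},\mathcal{D}(\mathcal{A}))}$ as $r\downarrow 0$ --- is a genuine gap, and the workaround you sketch does not close it. The smoothing bound behaves like $r^{-1}$, so the Bochner integrability of $s\mapsto \mathcal{S}(t-s)\mathcal{F}(u(s))$ in $\mathcal{D}(\mathcal{A})$ is governed by a kernel that is not in $L^1$ near $s=t$, and the $L^2$ bound needed for the compensated Poisson convolution would require $\int_0^t (t-s)^{-2}\,ds<\infty$, which is worse. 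Splitting into $s<t-\varepsilon$ and a singular tail does not help: $\int_{t-\varepsilon}^{t}(t-s)^{-1}\,ds$ diverges for every $\varepsilon>0$, and no square-integrability of $u$ (or of $\mathcal{F}(u)$, which only lives in $\mathbb{H}$) can compensate a non-integrable kernel. So the direct smoothing route, as you present it, delivers neither the $\mathcal{D}(\mathcal{A})$-valuedness nor the finiteness of the $\mathcal{D}(\mathcal{A})$-moments.

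The paper avoids the singularity by a structural device rather than a quantitative estimate: with the resolvent $\mathcal{R}=(-\mathcal{A}+\theta\mathcal{I})^{-1}$ of \eqref{resolvent}, one has $\mathbf{R}(\mathcal{R})=\mathcal{D}(\mathcal{A})$ by \eqref{rangereso}, and since $\mathcal{R}$ is closed it commutes with the Bochner and stochastic integrals by Hille's theorem; each convolution is then exhibited as $\mathcal{R}\chi$ for an explicit $\chi$ (for instance $\chi=\int_0^t(\theta\mathcal{I}-\mathcal{A})\mathcal{S}(t-s)\mathcal{F}(u(s))\,ds$, and analogously $\overline{\chi}$ for the jump term), so membership in $\mathcal{D}(\mathcal{A})$ follows from the range identity. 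The second moments are then controlled through the It\^o isometry \eqref{eq:isometry} of \cite[Theorem 4.14]{rudiger2004stochastic} together with the boundedness of $\mathcal{S}(t):\mathbb{L}^p(\mathcal{U})\to\mathcal{D}(\mathcal{A})$ in \eqref{eq:regula}, used at fixed positive times rather than uniformly down to $r=0$. Your closing remark about routing the argument ``through a resolvent regularization'' points at exactly this mechanism, but in the proposal it remains an unexecuted alternative; as written, the central step of the lemma is missing.
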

		\begin{proof}
			First, note that since $\mathcal{F}$ is continuous, then it is $(\mathbb{B}(\mathcal{D}(\mathcal{A}))/\mathbb{B}(\mathcal{D}(\mathcal{A})))$-measurable.
			On the other hand, since $(\mathcal{S}(t))_{t\geq0}$ is strongly continuous; then $\mathcal{S}(.-s)\mathcal{F}(u(.))$ is progressively measurable, as a composing of  progressively measurable processes. Thus, as a consequence of Fubini's theorem, the stochastic process $$\left(\displaystyle\int_0^t \mathcal{S}(t-s)\mathcal{F}(u(s))ds\right)_{t\geq0},$$ 
			is $\mathcal{F}_t$-adapted. By using the same preceding arguments, it can be proved that the stochastic process $$\displaystyle \left(\int_0^t \int_{\mathbb{Z}} \mathcal{S}(t-s)\mathcal{G}(z,x,u(s-)) \tilde{N}(ds,dz)\right)_{t\geq0},$$
			is also $\mathcal{F}_t$-adapted.
			
			Next, we claim that
			$$
			\displaystyle\int_0^t \mathcal{S}(t-s)\mathcal{F}(u(s))ds, \int_0^t \int_{\mathbb{Z}} \mathcal{S}(t-s)\mathcal{G}(z,x,u(s-)) \tilde{N}(ds,dz)
			\in \mathcal{D}(\mathcal{A})),\quad \forall t>0.$$
			This is established primarily by employing some properties of the resolvent operator, defined as follows:
			\begin{equation}\label{resolvent}
				\forall \theta>0, \quad \mathcal{R}:=\left(-\mathcal{A}+\theta \mathcal{I}\right)^{-1},
			\end{equation}
			where $\mathcal{I}$ denotes the identity operator.\\ 
			Note that $\mathcal{R}$ is compact. Moreover, it holds that (see e.g. \cite[Proof of Theorem 3.1]{pazy2012semigroups})
			\begin{equation}\label{rangereso}
				\mathbf{R}(\mathcal{R})=\mathcal{D}(\mathcal{A}),
			\end{equation}
			where $\mathbf{R}(.)$ stands for the range of a given operator.
			
			With Property \eqref{rangereso} in mind, it suffices to prove that 	$\exists \chi,\overline{\chi} \in \mathcal{D}(\mathcal{R})$
$$
				\displaystyle	
				\int_0^t \mathcal{S}(t-s)\mathcal{F}(u(s))ds=\mathcal{R} \chi,$$
				\text{and}
				$$
				\displaystyle
				\int_0^t \int_{\mathbb{Z}} \mathcal{S}(t-s)\mathcal{G}(z,x,u(s-)) \tilde{N}(ds,dz)=\mathcal{R} \overline{\chi}.
$$
			Now, since the operator $\mathcal{R}$ is closed, by Hille's theorem (see e.g. \cite[Theorem 3.7.12]{hille1948functional}), the operator $\mathcal{R}$ and the integral operator commute. With this in mind, we acquire that   
			\begin{align*}
				\mathcal{R} \int_0^t \mathcal{A} \mathcal{S}(t-s)\mathcal{F}(u(s))ds&=\int_0^t \mathcal{R} \mathcal{A} \mathcal{S}(t-s)\mathcal{F}(u(s))ds\\
				&=\int_0^t \theta \mathcal{R} \mathcal{S}(t-s) \mathcal{F}(u(s))ds-\int_0^t \mathcal{S}(t-s) \mathcal{F}(u(s))ds\\
				&=\mathcal{R}  \int_0^t \theta\mathcal{S}(t-s) \mathcal{F}(u(s))ds-\int_0^t \mathcal{S}(t-s) \mathcal{F}(u(s))ds.
			\end{align*}
			By the same arguments, it can be shown that 
			\begin{equation*}
				\begin{split}
					\mathcal{R} \int_0^t \int_{\mathbb{Z}} \mathcal{A} \mathcal{S}(t-s)\mathcal{G}(z,x,u(s-)) \tilde{N}(ds,dz)&=\mathcal{R}  \int_0^t \theta\mathcal{S}(t-s) \mathcal{G}(z,x,u(s-)) \tilde{N}(ds,dz)\\
					&-\int_0^t \int_{\mathbb{Z}} \mathcal{S}(t-s) \mathcal{G}(z,x,u(s-)) \tilde{N}(ds,dz).
				\end{split}
			\end{equation*}
			The claim follows by setting 
\begin{equation*}
			\displaystyle \chi:=\int_0^t \left(\theta \mathcal{I}-\mathcal{A}\right)\mathcal{S}(t-s) \mathcal{F}(u(s))ds,
\end{equation*}
			and 
\begin{equation*}
			\displaystyle \overline{\chi}:=\int_0^t \int_{\mathbb{Z}} \left(\theta \mathcal{I}-\mathcal{A}\right)\mathcal{S}(t-s) \mathcal{G}(z,x,u(s-)) \tilde{N}(ds,dz).
\end{equation*}
			Now, note that as a direct consequence of \cite[Theorem 4.14]{rudiger2004stochastic}, the following Itô isometry holds:
			\begin{equation}\label{eq:isometry}
				\mathbb{E} \Bigg \Vert \int_0^t \int_{\mathbb{Z}} \mathcal{S}(t-s)\mathcal{G}(z,x,u(s-)) \tilde{N}(ds,dz) \Bigg \Vert_{\mathcal{D}(\mathcal{A})}^2=  \int_0^t \int_{\mathbb{Z}} \mathbb{E} \Vert \mathcal{S}(t-s)\mathcal{G}(z,x,u(s)) \Vert_{\mathcal{D}(\mathcal{A})}^2 \nu(dz)ds.
			\end{equation}
			On the other hand, we infer that
			\begin{equation}\label{eq:iequality}
				\mathbb{E} \Bigg \Vert \int_0^t  \mathcal{S}(t-s)\mathcal{F}(u(s)) ds \Bigg \Vert_{\mathcal{D}(\mathcal{A})}^2\leq \int_0^t  \mathbb{E} \Vert \mathcal{S}(t-s)\mathcal{F}(u(s)) \Vert_{\mathcal{D}(\mathcal{A})}^2 ds.
			\end{equation}
			Finally, we indicate that the right hand side of Equality \eqref{eq:isometry} and that of Inequality \eqref{eq:iequality} are finite due to the boundedness of the mapping defined by \eqref{eq:regula}. 
			
			This concludes the proof.
		\end{proof}
 
		\begin{rem}\label{eq:firstass}
			Note that Lemma \ref{eq:firstlemma} ensures that $$\mathcal{S}(t-.)\mathcal{F}(u(.)) \in \mathcal{M}_{\mathbb{F}}^t,\; \forall 0< t\leq T.$$
			Furthermore, as a consequence of \cite[Theorem 1.1]{zhu2017maximal}, we further acquire that the process $$\displaystyle \left(\int_0^t \int_{\mathbb{Z}} \mathcal{S}(t-s)\mathcal{G}(z,x,u(s-)) \tilde{N}(ds,dz)\right)_{t \geq 0}$$ satisfies the càd-làg property. Hence, $$\mathcal{S}(t-.)  \mathcal{G}(z,x,u(.)) \in \mathcal{M}_{\mathbb{Z}}^t,\;\forall 0< t\leq T.$$
			Consequently, the first assertion of Definition \ref{eq:defmild} is fulfilled.
		\end{rem}
		
		We are now in a position to state the following first well-posedness result:
		\begin{thm}\label{exismildp1}
			For every $u_0 \in \mathbb{H},$ Problem \eqref{eq:abs} admits a pathwise unique global mild solution, depending continuously on the initial condition. 
		\end{thm}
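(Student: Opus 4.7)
The plan is to apply the Banach fixed point theorem to the mapping $\mathcal{L}$ on the space $(\mathbf{E}_{T,\lambda}, \Vert\cdot\Vert_{\mathbf{E}_{T,\lambda}})$ constructed above, and then upgrade the resulting mean-square identity to pathwise uniqueness and continuous dependence. Throughout, Lemma \ref{eq:firstlemma} and Remark \ref{eq:firstass} provide that every term defining $\mathcal{L}(u)$ is $\mathcal{D}(\mathcal{A})$-valued, progressively measurable and càd-làg, so that the first assertion of Definition \ref{eq:defmild} is automatic and only the quantitative estimates remain.

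First, I would show that $\mathcal{L}$ maps $\mathbf{E}_{T,\lambda}$ into itself. The linear term $\mathcal{S}(\cdot)u_0$ lies in $\mathcal{D}(\mathcal{A})$ for each $t>0$ thanks to the analyticity noted in \eqref{eq:regula}; the drift term is controlled by Cauchy--Schwarz applied to the Bochner integral together with the global Lipschitz property \ref{P1} (which implies the linear growth $\Vert \mathcal{F}(u)\Vert_{\mathbb{H}} \leq L_\mathcal{F}\Vert u\Vert_{\mathbb{H}} + \Vert \mathcal{F}(0)\Vert_{\mathbb{H}}$); the jump term is controlled by the Itô isometry \eqref{eq:isometry} together with the essential boundedness of the $\mathcal{C}_i$, which yields a Lipschitz bound of $\mathcal{G}(z,\cdot,u)$ in $\mathbb{H}$ uniformly in $z$.

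Next I would establish the contraction. For $u,v\in \mathbf{E}_{T,\lambda}$, writing
$$
\mathcal{L}(u)(t)-\mathcal{L}(v)(t)=\int_0^t\mathcal{S}(t-s)[\mathcal{F}(u(s))-\mathcal{F}(v(s))]\,ds+\int_0^t\!\int_{\mathbb{Z}}\mathcal{S}(t-s)[\mathcal{G}(z,\cdot,u(s^-))-\mathcal{G}(z,\cdot,v(s^-))]\tilde{N}(ds,dz),
$$
taking $\mathcal{D}(\mathcal{A})$-norms, squaring and taking expectations, and combining Cauchy--Schwarz on the drift with the Itô isometry \eqref{eq:isometry} on the jump part yields an inequality of the form
$$
\mathbb{E}\Vert \mathcal{L}(u)(t)-\mathcal{L}(v)(t)\Vert_{\mathcal{D}(\mathcal{A})}^2 \;\leq\; C\int_0^t K(t-s)\,\mathbb{E}\Vert u(s)-v(s)\Vert_{\mathcal{D}(\mathcal{A})}^2\,ds,
$$
where the kernel $K(\tau)$ absorbs the operator norms $\Vert \mathcal{S}(\tau)\Vert_{\mathcal{L}(\mathbb{H},\mathcal{D}(\mathcal{A}))}^2$ given by \eqref{eq:regula}. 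Multiplying by $e^{-\lambda t}$, bounding $\mathbb{E}\Vert u(s)-v(s)\Vert_{\mathcal{D}(\mathcal{A})}^2 \leq e^{\lambda s}\Vert u-v\Vert_{\mathbf{E}_{T,\lambda}}^2$, and taking the supremum in $t\in[0,T]$ produces an estimate of the form $\Vert \mathcal{L}(u)-\mathcal{L}(v)\Vert_{\mathbf{E}_{T,\lambda}}^2\leq \Psi(\lambda,T)\Vert u-v\Vert_{\mathbf{E}_{T,\lambda}}^2$ with $\Psi(\lambda,T)\to 0$ as $\lambda\to\infty$. Choosing $\lambda$ large makes $\mathcal{L}$ a strict contraction, and Banach's theorem furnishes a unique fixed point $u^{\star}\in \mathbf{E}_{T,\lambda}$, which by construction is a global mild solution; independence from $\lambda$ follows from the equivalence of the $(\Vert\cdot\Vert_{\mathbf{E}_{T,\lambda}})_{\lambda\geq 0}$. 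For pathwise uniqueness, I would rerun the same estimate with two arbitrary mild solutions $u,v$, obtain $\mathbb{E}\Vert u(t)-v(t)\Vert_{\mathcal{D}(\mathcal{A})}^2\equiv 0$ via Gronwall, and then promote this to the $\mathbb{P}$-a.s.\ identity \eqref{eq:pathuniquenessdef} by invoking the càd-làg property and evaluating on a countable dense subset of $[0,T]$. For continuous dependence, the analogous estimate applied to two solutions with initial data $u_0,v_0$ gives
$$
\mathbb{E}\Vert u(t)-v(t)\Vert_{\mathcal{D}(\mathcal{A})}^2\;\leq\; C_1\,\mathbb{E}\Vert \mathcal{S}(t)(u_0-v_0)\Vert_{\mathcal{D}(\mathcal{A})}^2 + C_2\int_0^t K(t-s)\,\mathbb{E}\Vert u(s)-v(s)\Vert_{\mathcal{D}(\mathcal{A})}^2\,ds,
$$
and a (generalized) Gronwall lemma concludes.

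The technical heart of the argument, and the step I expect to be the main obstacle, lies in Step~2: because $u_0$ is only assumed in $\mathbb{H}$ while the ambient norm is the graph norm on $\mathcal{D}(\mathcal{A})$, the operator-norm bound $\Vert \mathcal{S}(\tau)\Vert_{\mathcal{L}(\mathbb{H},\mathcal{D}(\mathcal{A}))}$ provided by \eqref{eq:regula} is singular as $\tau\to 0^+$. This singularity must be balanced against the exponential weight $e^{-\lambda t}$ in the norm \eqref{norm} so that the contraction constant $\Psi(\lambda,T)$ still tends to zero as $\lambda\to\infty$. Making this work requires carefully exploiting the analyticity of $(\mathcal{S}(t))_{t\geq 0}$ (specifically, an integrable kernel estimate of the type $\Vert \mathcal{S}(\tau)\Vert_{\mathcal{L}(\mathbb{H},\mathcal{D}(\mathcal{A}))}^2 \lesssim \tau^{-\alpha}$ with $\alpha<1$, or an equivalent fractional-power argument), so that the Laplace-type convolution $\int_0^T e^{-\lambda\tau}K(\tau)\,d\tau$ converges and vanishes as $\lambda\to\infty$.
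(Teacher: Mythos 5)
Your proposal follows essentially the same route as the paper: the fixed point of $\mathcal{L}$ in the exponentially weighted space $\mathbf{E}_{T,\lambda}$, with the drift handled by Cauchy--Schwarz, the jump term by the It\^o isometry \eqref{eq:isometry}, pathwise uniqueness upgraded from uniqueness of modifications via the c\`ad-l\`ag property, and continuous dependence via Gronwall. All of those steps match the paper's proof of Theorem \ref{exismildp1} essentially line by line.

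The one genuine divergence is your final paragraph on the smoothing estimate, and there your diagnosis is sharper than your cure. You are right that \eqref{eq:regula} is not uniform as $\tau \to 0^{+}$ (the paper simply applies it as if the constant were uniform in $t-s$, which is the imprecise point of its estimate \eqref{eq:contra_est}), but the remedy you propose does not close the gap: for an analytic semigroup the graph-norm smoothing is $\Vert \mathcal{A}\mathcal{S}(\tau)\Vert_{\mathcal{L}(\mathbb{H})} \lesssim \tau^{-1}$, hence $K(\tau)=\Vert \mathcal{S}(\tau)\Vert^{2}_{\mathcal{L}(\mathbb{H},\mathcal{D}(\mathcal{A}))} \sim \tau^{-2}$ near zero, so no exponent $\alpha<1$ is available and the Laplace-type integral $\int_0^T e^{-\lambda\tau}K(\tau)\,d\tau$ diverges for every $\lambda$; the weight in \eqref{norm} cannot tame this singularity. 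The consistent repair --- which also rescues the computation as the paper intends it --- is to run the whole fixed-point scheme with the $\mathbb{H}$-norm in place of the graph norm (note that Definition \ref{eq:defmild} only asks for an $\mathbb{H}$-valued process): there $\Vert \mathcal{S}(\tau)\Vert_{\mathcal{L}(\mathbb{H})}$ is bounded uniformly on $[0,T]$, the Lipschitz bounds on $\mathcal{F}$ and $\mathcal{G}$ act directly in $\mathbb{H}$, and every one of your estimates, including the contraction constant of order $\lambda^{-1}$ and the Gronwall arguments, goes through verbatim; membership of the convolution terms in $\mathcal{D}(\mathcal{A})$ for $t>0$ is then a separate regularity statement (Lemma \ref{eq:firstlemma}) rather than part of the metric in which you contract. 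Alternatively one can assume $u_0\in\mathcal{D}(\mathcal{A})$ and invoke maximal-regularity-type results, but that proves a different (stronger-normed) statement than the theorem as formulated.
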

		\begin{proof}
			With Remark \ref{eq:firstass} and Lemma \ref{eq:firstlemma} in mind, it remains to establish that $\mathcal{L}$ is a contraction from $(\mathbf{E}_{T,\lambda},\Vert.\Vert_{\mathbf{E}_{T,\lambda}})$ into itself. To this end, let $u,v \in \mathbf{E}_{T,\lambda},$ we infer that
			\begin{equation}\label{eq:contra_est}
				\begin{split}
					\Vert \mathcal{L}(u)-\mathcal{L}(v)\Vert_{\mathbf{E}_{T,\lambda}}^2&:=\underset{0\leq t\leq T}{\sup} e^{-\lambda t} \mathbb{E} \Bigg \Vert \int_0^t \mathcal{S}(t-s) (\mathcal{F}(u(s))-\mathcal{F}(v(s))) ds \\[-0.5ex]
					&+\int_0^t \int_{\mathbb{Z}} \mathcal{S}(t-s)(\mathcal{G}(z,x,u(s-))-\mathcal{G}(z,x,v(s-))) \tilde{N}(ds,dz) \Bigg \Vert_{\mathcal{D}(\mathcal{A})}^2 \\[-0.5ex]
					&\leq 2 \underset{0\leq t\leq T}{\sup}\left(T e^{-\lambda t} \int_0^t \mathbb{E} \Vert \mathcal{F}(u(s))-\mathcal{F}(v(s)) \Vert_{\mathbb{H}}^2 ds\right.\\[-0.5ex]
					&+\left. e^{-\lambda t}\int_0^t \int_{\mathbb{Z}} \mathbb{E} \Vert (\mathcal{G}(z,x,u(s))-\mathcal{G}(z,x,v(s))) \Vert_{\mathbb{H}}^2 \nu(dz)ds\right)\\[-0.5ex]
					&\leq 2 \underset{0\leq t\leq T}{\sup}\left( T L_\mathcal{F}^2 e^{-\lambda t} \int_0^t \mathbb{E} \Vert u(s)-v(s) \Vert_{\mathcal{D}(\mathcal{A})}^2 ds\right.\\[-0.5ex]
					&+\left. \max_{i \in \{1,2,3\}} \Vert \mathcal{C}_i \Vert_{L^\infty(\mathbb{Z} \times \mathcal{U})}^2 e^{-\lambda t} \int_0^t \int_{\mathbb{Z}} \mathbb{E} \Vert (u(s)-(v(s)) \Vert_{\mathcal{D}(\mathcal{A})}^2 \nu(dz)ds\right)\\[-0.5ex]
					&\leq 2 T L_\mathcal{F}^2 \underset{0\leq t\leq T}{\sup}  \int_0^t e^{-\lambda (t-s)} \mathbb{E} \Vert u(s)-v(s) \Vert_{\mathcal{D}(\mathcal{A})}^2 e^{-\lambda s} ds\\[-0.5ex]
					&+ 2 \max_{i \in \{1,2,3\}} \Vert \mathcal{C}_i \Vert_{L^\infty(\mathbb{Z} \times \mathcal{U})}^2  \nu(\mathbb{Z}) \underset{0\leq t\leq T}{\sup} \int_0^t e^{-\lambda(t-s)} \mathbb{E} \Vert u(s)-v(s) \Vert_{\mathcal{D}(\mathcal{A})}^2 e^{-\lambda s} ds\\[-0.5ex]
					&\leq 2 e^{-\lambda T} \int_0^T e^{\lambda s} ds\left(T L_M^2+\max_{i \in \{1,2,3\}} \Vert \mathcal{C}_i \Vert_{L^\infty(\mathbb{Z} \times \mathcal{U})}^2  \nu(\mathbb{Z})\right) \underset{0\leq s \leq T}{\sup} e^{-\lambda s} \mathbb{E} \Vert u(s)-v(s) \Vert_{\mathcal{D}(\mathcal{A})} \\
					&\leq \dfrac{2}{\lambda} \left(T L_\mathcal{F}^2+\max_{i \in \{1,2,3\}} \Vert \mathcal{C}_i \Vert_{L^\infty(\mathbb{Z} \times \mathcal{U})}^2  \nu(\mathbb{Z})\right) \Vert u-v \Vert_{\mathbf{E}_{T,\lambda}}^2,
				\end{split}
			\end{equation}
			where we have used the definition of $\mathcal{G},$ Property \eqref{eq:regula}, Isometry \eqref{eq:isometry} and Cauchy-Schwartz inequality.
			
			Now, by choosing $\lambda>0$ large enough such that 
			\begin{equation}\label{eq:lambda}
				\lambda > 4 \left(T L_\mathcal{F}^2+\max_{i \in \{1,2,3\}} \Vert \mathcal{C}_i \Vert_{L^\infty(\mathbb{Z} \times \mathcal{U})}^2  \nu(\mathbb{Z})\right),
			\end{equation}
			it follows that
			$$
			\Vert \mathcal{L}(u)-\mathcal{L}(v)\Vert_{\mathbf{E}_{T,\lambda}} \leq \dfrac{1}{\sqrt{2}} \Vert u-v \Vert_{\mathbf{E}_{T,\lambda}}.
			$$
			Thus, by Banach fixed point theorem, it follows that 
			\begin{equation}\label{eq:first}
				\exists ! \textbf{u} \in \mathbf{E}_{T,\lambda}, \quad \text{such that}\quad (\mathcal{L}(\textbf{u}))(t)=\textbf{u}(t),\quad  \mathbb{P}\text{-a.s,} \; \forall t \in (0,T).
			\end{equation}
			Here, we precise that the uniqueness is in the sense of modifications. That is,
			\begin{equation}\label{eq:uniqueasure}
				\left(\exists v \in \mathbf{E}_{T,\lambda}, \quad \mathcal{L}(v)=v\right) \Longrightarrow \textbf{u}(t)=v(t),\quad \mathbb{P}\text{-a.s}, \; \forall t \in (0,T).
			\end{equation}
			However, to achieve that the uniqueness in the sense of modifications is equivalent to pathwise uniqueness, we still have  to prove that the mild solution has a càd-làg modification. Note that thanks to the global Lipschitzity of the operator $\mathcal{F}$, the process
			\begin{align*}
				(u(t))_{t \geq 0}&\overset{\Delta}{=}\left(\mathcal{S}(t) \textbf{u}_0 + \int_0^t \mathcal{S}(t-s) \mathcal{F}(\textbf{u}(s)) ds+ \int_0^t \int_{\mathbb{Z}} \mathcal{S}(t-s) \mathcal{G}(z,x,\textbf{u}(s-)) \tilde{N}(ds,dz)\right)_{t \geq 0}\\[0.5ex]
				&=:(\mathcal{L}(\textbf{u}))_{t\geq0},
			\end{align*}
			satisfies the càd-làg property. Furthermore, it holds that 
			\begin{equation}\label{eq:second}
				\mathbb{E} \Vert \textbf{u}(t)-u(t) \Vert_{\mathcal{D}(\mathcal{A})}^2=0.
			\end{equation}
			Now, it remains to prove that 
			$$
			\mathcal{L}(u)(t)=u(t), \quad \mathbb{P}\text{-a.s},\; \forall t \in(0,T),
			$$
			which immediately follows by combining \eqref{eq:first} and \eqref{eq:second}.
			
			Thus, the pathwise uniqueness of the mild solution is an immediate consequence of \eqref{eq:uniqueasure} together with the càd-làg property (see e.g.  \cite[Chapter 1]{karatzas1991brownian}).
			
			To conclude the proof, we still have to establish the continuity of the pathwise unique mild solution with respect to the initial condition. To this end, let $u$ and $\hat{u}$ be the pathwise unique mild solutions corresponding to the initial conditions  $u_0$ and $\hat{u}_0,$ respectively. Then, it follows that 
			$$
			\mathbb{E} \Vert u(t)-\hat{u}(t) \Vert_{\mathcal{D}(\mathcal{A})}^2 \leq \Vert u_0-\hat{u}_0 \Vert_{\mathbb{H}}^2+\left(L_\mathcal{F}+\underset{i \in \{1,2,3\}}{\max} \Vert \mathcal{C}_i \Vert_{L^\infty(\mathbb{Z} \times \mathcal{U})} \nu(\mathbb{Z})\right) \int_0^t \mathbb{E} \Vert u(s)-\hat{u}(s) \Vert_{\mathcal{D}(\mathcal{A})}^2 ds.
			$$ 
			By virtue of the integral form of Gronwall's inequality (see e.g. \cite[Appendix B]{evans2022partial}) yields
			$$
			\Vert u-\hat{u} \Vert_{\mathbf{E}_{T,0}}^2  \leq\Vert u_0-\hat{u}_0 \Vert_{\mathbb{H}}^2 \left(1+T e^{\left(L_\mathcal{F}+\underset{i \in \{1,2,3\}}{\max} \Vert \mathcal{C}_i \Vert_{L^\infty(\mathbb{Z} \times \mathcal{U})} \nu(\mathbb{Z}) \right) T}\right).
			$$
			The result follows by the equivalence of the norms $\left(\Vert . \Vert_{\mathbf{E}_{T,\lambda}}\right)_{\lambda \geq 0}$.
		\end{proof}
		\begin{rem}
			Observe that the importance of the parameter $\lambda>0$ relies on the fact that for every assigned values to the biological parameters $\Lambda, \mu, \gamma$ and $\mathcal{C}_i\;(i \in \{1,2,3\}),$ one can always find a suitable parameter $\lambda(\Lambda,\mu,\gamma,\mathcal{C}_1,\mathcal{C}_2,\mathcal{C}_3)>0,$ satisfying Inequality \eqref{eq:lambda}, which guarantees that the mapping $\mathcal{L}$ defines a contraction. This induces the incorporation of variate scenarios, without a restriction on the assigned values to the biological parameters, especially when it comes to numerical simulations.
		\end{rem}
		\subsection{Well-posedness results in the case of incidence functions satisfying \ref{P3}}
		In this subsection, the incidence function is assumed to satisfy \ref{P3}. The Lipschitz property of the operator $\mathcal{F}$ in this case is merely local, which prohibits the direct use of Banach fixed point theorem. In light of this difficulty, we proceed by a standard truncation technique (see e.g. \cite[p. 86]{chow2014stochastic}). 
		Given $M>0,$ choose a $\mathcal{C}^\infty$- function $\Pi_M$ such that
		\begin{equation*}
			\quad \Pi_M(r):=\begin{cases}
				\begin{split}
					&1, &\quad \text{if}\quad 0 \leq r \leq M,\\
					&0,  &\quad \text{if}\quad r>2M.
				\end{split}
			\end{cases}
		\end{equation*}
		Additionally, set 
		\begin{equation*}
			\mathcal{F}^M(u):=\mathcal{F}(\Pi_M(\Vert u \Vert_{\mathbb{L}^{\infty}(\mathcal{U})})u), \quad \forall u \in \mathbb{L}^\infty(\mathcal{U}).
		\end{equation*}
		Now, introduce the following intermediate problem:
		\begin{equation}\label{eq:abstrunc}
			\begin{cases}
				\displaystyle du(t)=\left(\mathcal{A}u(t)+\mathcal{F}^M(u(t)\right) dt+ \int_{\mathbb{Z}} \mathcal{G}(z,x,u(t-)) \tilde{N}(dt,dz),\\
				u(0)=u_0.
			\end{cases}
		\end{equation}
		Then, consider the following Banach space: 
		$$
		\hat{\mathbf{E}}_{T,\lambda}:=\{u: \mathbb{R}^+ \times \Omega \longrightarrow \mathbb{L^\infty}(\mathcal{U}), \; \text{progressively measurable, such that}\; \underset{0\leq t \leq T}{\sup} {\mathbb{E} \Vert u(t) \Vert_{\mathbb{L^\infty}(\mathcal{U})}}< +\infty\},
		$$
		equipped with the following norm: 
		\begin{equation*}
			\Vert u \Vert_{\hat{\mathbf{E}}_{T,\lambda}}^2:=\underset{0\leq t \leq T}{\sup} e^{-\lambda t} {\mathbb{E} \Vert u(t) \Vert_{\mathbb{L}^\infty(\mathcal{U})}^2}, \quad \forall u \in \hat{\mathbf{E}}_{T,\lambda}, \quad \forall \lambda>0.
		\end{equation*}
		Since the operator $\mathcal{F}^M$ is globally Lipschitz, by proceeding analogously to the proofs of Lemma \ref{eq:firstlemma} and Theorem \ref{exismildp1}, it can be proved that Problem \eqref{eq:abstrunc} admits a pathwise unique mild solution, which we shall henceforth denote by $u^M$.

		\begin{thm}\label{exismildp3}
			For every $u_0 \in \mathbb{L}^{\infty}(\mathcal{U}),$ Problem \eqref{eq:abs} admits a pathwise unique global mild solution, depending continuously on the initial condition. 
		\end{thm}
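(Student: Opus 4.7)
The plan is to combine the truncated problem \eqref{eq:abstrunc}, already solvable by the arguments of Lemma \ref{eq:firstlemma} and Theorem \ref{exismildp1}, with a localization argument that exploits the growth condition in \ref{P3} to preclude blow-up. Since $\mathcal{F}^M$ is globally Lipschitz on $\mathbb{L}^\infty(\mathcal{U})$ by construction, the verbatim analogues of those statements, now set in $\hat{\mathbf{E}}_{T,\lambda}$ rather than $\mathbf{E}_{T,\lambda}$, produce a pathwise unique mild solution $u^M$ of \eqref{eq:abstrunc} that depends continuously on $u_0 \in \mathbb{L}^\infty(\mathcal{U})$.

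Next I would introduce the stopping times
\[
\tau_M := \inf\{t \in [0,T] : \Vert u^M(t) \Vert_{\mathbb{L}^\infty(\mathcal{U})} > M\} \wedge T.
\]
Since $\Pi_M \equiv 1$ on $[0,M]$, on the stochastic interval $[0,\tau_M]$ one has $\mathcal{F}^M(u^M) = \mathcal{F}(u^M)$, so $u^M$ is a mild solution of the original equation \eqref{eq:abs} there. Moreover, for $M \leq M'$, pathwise uniqueness for the truncated problem forces $u^M(t) = u^{M'}(t)$ on $[0, \tau_M \wedge \tau_{M'}]$, and in particular the family $(\tau_M)_M$ is nondecreasing in $M$.

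The crux of the argument is the a priori estimate $\tau_M \nearrow T$ $\mathbb{P}$-almost surely as $M \to \infty$. The growth bound \eqref{eq:growthoper} survives the truncation in the form $\Vert \mathcal{F}^M(u) \Vert_{\mathbb{L}^\infty(\mathcal{U})} \leq C_\mathcal{F} \Vert u \Vert_{\mathbb{L}^\infty(\mathcal{U})}$, and the essential boundedness of $\mathcal{C}_i$ produces the pointwise linear bound $\Vert \mathcal{G}(z,\cdot,u) \Vert_{\mathbb{L}^\infty(\mathcal{U})} \leq K \Vert u \Vert_{\mathbb{L}^\infty(\mathcal{U})}$. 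Combining the uniform boundedness of $(\mathcal{S}(t))_{t\geq 0}$ on $\mathbb{L}^\infty(\mathcal{U})$ with a maximal-type inequality for the stochastic convolution driven by $\tilde{N}$ (in the spirit of \cite{zhu2017maximal}) and Gronwall's lemma, I expect to obtain
\[
\mathbb{E} \sup_{0 \leq t \leq T} \Vert u^M(t) \Vert_{\mathbb{L}^\infty(\mathcal{U})}^2 \leq C(T,\Lambda,\mu,\gamma,\mathcal{C}_i)\bigl(1 + \Vert u_0 \Vert_{\mathbb{L}^\infty(\mathcal{U})}^2\bigr),
\]
with $C$ independent of $M$. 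Markov's inequality then yields $\mathbb{P}(\tau_M < T) = \mathcal{O}(M^{-2})$, whence $\tau_M \nearrow T$ almost surely. This step is where I anticipate the main difficulty, because the natural Itô isometry \eqref{eq:isometry} is inherently an $L^2$-estimate, whereas one here needs an $\mathbb{L}^\infty$-compatible maximal bound on the jump stochastic convolution.

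Once the uniform bound is in hand, the global mild solution is built by patching: $u(t) := u^M(t)$ on $\{\tau_M \geq t\}$ is unambiguous by the consistency above, and Definition \ref{eq:defmild} is satisfied on $[0,T]$. Pathwise uniqueness for \eqref{eq:abs} follows by localizing any two putative mild solutions $u,v$ via $\sigma_M := \inf\{t : \Vert u(t) \Vert_{\mathbb{L}^\infty(\mathcal{U})} \vee \Vert v(t) \Vert_{\mathbb{L}^\infty(\mathcal{U})} > M\} \wedge T$: both processes solve \eqref{eq:abstrunc} on $[0,\sigma_M]$, hence coincide there, and the same a priori estimate yields $\sigma_M \nearrow T$. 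Continuous dependence on $u_0 \in \mathbb{L}^\infty(\mathcal{U})$ is inherited from that of the truncated solutions $u^M$ together with the uniform-in-$M$ bound, after passing to the limit $M \to \infty$ on the intersection of the two solutions' localizing intervals.
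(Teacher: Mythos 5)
Your proposal follows essentially the same route as the paper: truncate $\mathcal{F}$ to the globally Lipschitz $\mathcal{F}^M$, solve \eqref{eq:abstrunc} by the arguments of Lemma \ref{eq:firstlemma} and Theorem \ref{exismildp1}, introduce the stopping times $\tau_M$, use the growth condition \eqref{eq:growthoper} together with Gronwall's inequality to get an $M$-independent bound, and conclude via a Markov-type estimate that the stopping times diverge. The only notable difference is that the paper works with the first moment $\mathbb{E}\Vert u^M(t)\Vert_{\mathbb{L}^\infty(\mathcal{U})}$ pointwise in $t$ (so the $\mathbb{L}^\infty$-compatible maximal inequality you flag as the main difficulty is not needed there), rather than your $\mathbb{E}\sup$ second-moment bound.
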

		\begin{proof}
			Let $T>0.$ With the càd-làg property taken into account, consider the following well-defined sequence of stopping times: 
			$$
			\tau_M:=\underset{t \in [0,T]}{\inf}\{\Vert u^M(t) \Vert_{\mathbb{L}^\infty(\mathcal{U})} \geq M\}.
			$$
			Obviously, $(\tau_M)_{M>0}$ is an increasing sequence. Hence, there exists $\tau_\infty$ such that 
			$$
			\underset{M \rightarrow +\infty}{\lim} \tau_M(\omega)=\tau_\infty(\omega),\quad \forall \omega \in \Omega.
			$$ 
			Remark that 
			$$
			\mathcal{F}^M(u^M(t))=\mathcal{F}(u^M(t)), \quad \forall t \in (0,\tau_M).
			$$
			Consequently, to conclude the proof it suffices to prove that $\tau_\infty=+\infty.$\\
			To this end, recall that
			$$
			\displaystyle u^M(t)=\mathcal{S}(t) u_0 + \int_0^t \mathcal{S}(t-s) \mathcal{F}(u^M(s)) ds+ \int_0^t \int_{\mathbb{Z}} \mathcal{S}(t-s) \mathcal{G}(z,x,u^M(s-)) \tilde{N}(ds,dz), \quad \forall t \in (0,\tau_M).
			$$
			Let $t \in (0,\tau_M)$, by employing the growth condition \eqref{eq:growthoper}, we infer that
			$$
			\mathbb{E} \Vert u^M(t) \Vert_{\mathbb{L^\infty}(\mathcal{U})} \leq  \Vert u_0 \Vert_{\mathbb{L^\infty}(\mathcal{U})}+\left(C_{\mathcal{F}}+\underset{i \in \{1,2,3\}}{\max} \Vert \mathcal{C}_i \Vert_{L^\infty(\mathbb{Z} \times \mathcal{U})}\nu(\mathbb{Z})\right) \int_0^t \mathbb{E} \Vert u^M(s) \Vert_{\mathbb{L}^{\infty}(\mathcal{U})} ds.
			$$
			Hence, by virtue of the integral form of Gronwall's inequality (see e.g. \cite[Appendix B]{evans2022partial}), we obtain
			$$
			\mathbb{E} \Vert u^M(t) \Vert_{\mathbb{L}^\infty(\mathcal{U})} \leq \Vert u_0 \Vert_{\mathbb{L^\infty}(\mathcal{U})} \left(1+T e^{\left(C_{\mathcal{F}}+\underset{i \in \{1,2,3\}}{\max} \Vert \mathcal{C}_i \Vert_{L^\infty(\mathbb{Z} \times \mathcal{U})}\nu(\mathbb{Z})\right) T}\right).
			$$
			Now, remark that 
			\begin{align*}
				\mathbb{P} \{\tau_M(\omega)<t\}&=\mathbb{E} \mathbbm{1}_{\left\{\tau_M(\omega)<t\right\} }=\int_{\Omega} \dfrac{\Vert u^M(\tau_M) \Vert_{\mathbb{L^\infty}(\mathcal{U})}}{\Vert u^M(\tau_M) \Vert_{\mathbb{L^\infty}(\mathcal{U})}} \mathbf{1}_{\left\{\tau_M(\omega)<t\right\}} d\mathbb{P}\\
				&\leq \dfrac{1}{M} \Vert u_0 \Vert_{\mathbb{L^\infty}(\mathcal{U})} \left(1+T e^{\left(C_{\mathcal{F}}+\underset{i \in \{1,2,3\}}{\max} \Vert \mathcal{C}_i \Vert_{L^\infty(\mathbb{Z} \times \mathcal{U})} \nu(\mathbb{Z})\right) T}\right).
			\end{align*}
			Define the following decreasing sequence of sets: 
			$$
			A_M:=\{\tau_M<t\}, \quad \forall M\in \mathbb{N}^*.
			$$
			By the below-continuity property, we infer that 
			\begin{align*}
				\mathbb{P}\{\tau_\infty<t\}=\mathbb{P}\left\{\underset{M \in \mathbb{N}^*}{\bigcap} A_M\right\}&=\underset{M \rightarrow +\infty}{\lim} \mathbb{P}(A_M)\\
				&\leq \underset{M \rightarrow +\infty}{\lim}\dfrac{1}{M}  \Vert u_0 \Vert_{\mathbb{L^\infty}(\mathcal{U})} \left(1+T e^{\left(C_{\mathcal{F}}+\underset{i \in \{1,2,3\}}{\max} \Vert \mathcal{C}_i \Vert_{L^\infty(\mathbb{Z} \times \mathcal{U})} \nu(\mathbb{Z})\right) T}\right)\\
				&=0.
			\end{align*}
			Thus, $\tau_\infty=+\infty$ $\mathbb{P}$\text{-a.s.}

			The continuity with respect to the initial condition can be obtained analogously to the proof of Theorem \ref{exismildp1}. Indeed, one can obtain that
			$$
			\Vert u-\hat{u} \Vert_{\mathbb{L}^{\infty}(\mathcal{U})}^2  \leq\Vert u_0-\hat{u}_0 \Vert_{\mathbb{L}^{\infty}(\mathcal{U})}^2 \left(1+T e^{\left(L_\mathcal{F}+\underset{i \in \{1,2,3\}}{\max} \Vert \mathcal{C}_i \Vert_{L^\infty(\mathbb{Z} \times \mathcal{U})} \nu(\mathbb{Z})\right) T}\right).$$
			This concludes the proof.
		\end{proof}
		\subsection{Approximation by means of strong solutions}
		The main result of this subsection concerns the approximation of the pathwise unique global mild solution, given by Theorem \ref{exismildp1}, by a converging sequence of pathwise unique global strong solutions. Let  $\mathcal{R}$ be defined as in \eqref{resolvent}. First, recall that 
		\begin{equation}\label{eq:resolcharac}
			\mathcal{R}u=\int_0^{+\infty} e^{-\theta t}\mathcal{S}(t) u dt, \quad \forall u \in \mathbb{L}^{p}(\mathcal{U}).
		\end{equation}
		Furthermore, by Hille-Yosida theorem (see e.g. \cite[Theorem 3.1]{pazy2012semigroups} and \cite[Lemma 3.2]{pazy2012semigroups}), it holds that 
		$$\Vert \mathcal{R}\Vert  \leq \dfrac{1}{\theta} \quad \text{and}\quad \underset{\theta \rightarrow +\infty}{\lim} \theta\mathcal{R}u=u,\quad \forall u \in \mathbb{L}^{p}(\mathcal{U}).$$
		Now, set
		\begin{equation*}\label{eq:fteta}
			\mathcal{F}^\theta(t):=\theta \mathcal{R} \mathcal{F}(u(t)),\quad \forall t \in (0,T),
		\end{equation*}
		\begin{equation*}\label{eq:fbarre}
			\overline{F}(t):=\mathcal{F}(u(t)),\quad \forall t \in (0,T),
		\end{equation*}
		\begin{equation*}\label{eq:gbarre}
			\overline{G}(t):=\mathcal{G}(z,x,u(t)),\quad \forall t \in (0,T),
		\end{equation*} 
		and
		\begin{equation*}
			u^\theta_0:=\theta \mathcal{R} u_0,
		\end{equation*}
		where $u$ denotes the pathwise unique mild solution to Problem \eqref{eq:abs}.\\
		
		Now, consider the following two intermediate problems:
		\begin{equation}\label{reg}
			\begin{cases}
				\displaystyle dv(t)=\left(\mathcal{A} v(t) + \mathcal{F}^\theta(t)\right)dt+\int_{\mathbb{Z}} \overline{G}(z,x,t-) \tilde{N}(dt,dz),\\
				v(0)=u^\theta_0,
			\end{cases}
		\end{equation}
		and
		\begin{equation}\label{reglim}
			\begin{cases}
				\displaystyle dv(t)=\left(\mathcal{A} v(t) + \overline{F}(t)\right)dt+\int_{\mathbb{Z}} \overline{G}(z,x,t-) \tilde{N}(dt,dz),\\
				v(0)=u_0.
			\end{cases}
		\end{equation}

		We now announce a lemma, which will play a major role in what follows.
		\begin{lem}\label{vtettalemma}
			Assume that the incidence function satisfies \ref{P1} or \ref{P3}, then Problem \eqref{reg} admits a pathwise unique strong solution.
		\end{lem}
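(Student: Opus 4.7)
The plan is first to write down the natural mild solution candidate for Problem \eqref{reg} and verify that it is well defined and takes values in $\mathcal{D}(\mathcal{A})$, then to promote this mild solution to a strong one by exploiting the resolvent regularization, and finally to establish pathwise uniqueness. The crucial observation is that Problem \eqref{reg} is \emph{affine linear} in $v$: the drift $\mathcal{F}^\theta(t)$ and the noise coefficient $\overline{G}(t)$ depend only on the already-known mild solution $u$ of Problem \eqref{eq:abs}, not on $v$ itself. Consequently, one does not need to invoke Banach's fixed-point theorem again; existence reduces to checking integrability and progressive measurability of the formula
\begin{equation*}
v(t) := \mathcal{S}(t) u_0^\theta + \int_0^t \mathcal{S}(t-s)\mathcal{F}^\theta(s) ds + \int_0^t \int_{\mathbb{Z}} \mathcal{S}(t-s)\overline{G}(z,x,s-) \tilde{N}(ds,dz),
\end{equation*}
which can be carried out along the lines of Lemma \ref{eq:firstlemma} and Remark \ref{eq:firstass}, using the boundedness of $u$ already available from Theorems \ref{exismildp1}--\ref{exismildp3}.

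The second, and main, step is to show that this $v$ is in fact a strong solution. Here the role of $\theta \mathcal{R}$ becomes decisive: since $\mathbf{R}(\mathcal{R})=\mathcal{D}(\mathcal{A})$, both $u_0^\theta=\theta \mathcal{R} u_0$ and $\mathcal{F}^\theta(s)=\theta \mathcal{R}\mathcal{F}(u(s))$ belong to $\mathcal{D}(\mathcal{A})$ for every $s$. Together with the fact that $\mathcal{S}(t)$ leaves $\mathcal{D}(\mathcal{A})$ invariant and commutes with $\mathcal{A}$ on that subspace, this yields $v(t)\in \mathcal{D}(\mathcal{A})$ $\mathbb{P}$-a.s.; the stochastic convolution was already shown to lie in $\mathcal{D}(\mathcal{A})$ in the proof of Lemma \ref{eq:firstlemma}. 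To derive the strong equation, I would apply $\mathcal{A}$ to the mild formula, pull $\mathcal{A}$ inside the deterministic and stochastic integrals via Hille's theorem (as used already in the proof of Lemma \ref{eq:firstlemma}) together with a stochastic Fubini-type argument for the compensated Poisson integral, and then integrate the semigroup identity $\mathcal{S}(t)\xi-\xi=\int_0^t \mathcal{A}\mathcal{S}(r)\xi dr$, valid for $\xi \in \mathcal{D}(\mathcal{A})$. Recombining the resulting expressions should produce
\begin{equation*}
v(t)=u_0^\theta+\int_0^t \mathcal{A} v(s) ds+\int_0^t \mathcal{F}^\theta(s) ds+\int_0^t \int_{\mathbb{Z}} \overline{G}(z,x,s-) \tilde{N}(ds,dz),
\end{equation*}
which is exactly the strong form required by Definition \ref{eq:strong}.

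Pathwise uniqueness is the easiest part: if $v_1,v_2$ are two strong solutions, their difference $w=v_1-v_2$ solves the homogeneous linear problem $dw(t)=\mathcal{A}w(t) dt$ with $w(0)=0$, because the $\mathcal{F}^\theta$ and $\overline{G}$ contributions cancel (they do not depend on $v$). A Gronwall-type estimate for $\mathbb{E}\Vert w(t)\Vert_{\mathcal{D}(\mathcal{A})}^2$, together with the càd-làg property of the trajectories, then forces $w\equiv 0$ $\mathbb{P}$-a.s.

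The main obstacle is the rigorous execution of the second step: carefully justifying the exchange of $\mathcal{A}$ with both the Bochner and the compensated Poisson integrals, and then recombining the identities so as to read off the strong formulation pointwise in $t$ (up to a $\mathbb{P}$-null set) rather than merely term-by-term. This requires simultaneously the closedness of $\mathcal{A}$, Hille's theorem, the Itô isometry \eqref{eq:isometry}, and a stochastic Fubini theorem for pure-jump noise. Once this bookkeeping is done, everything else is routine given the machinery already developed in Section \ref{s3}.
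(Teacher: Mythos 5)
Your proposal is correct and follows essentially the same route as the paper: obtain the mild solution of Problem \eqref{reg} (the paper invokes Theorems \ref{exismildp1}--\ref{exismildp3}, you exploit the affine structure directly, which amounts to the same thing), then promote it to a strong solution by applying $\mathcal{A}$, commuting it with the Bochner and compensated Poisson integrals via Hille's theorem and stochastic Fubini, and using the semigroup identity $\mathcal{S}(t)\xi-\xi=\int_0^t\mathcal{A}\mathcal{S}(r)\xi\,dr$ for $\xi\in\mathcal{D}(\mathcal{A})$. Your uniqueness argument (Gronwall on the difference of two strong solutions) is a harmless variant of the paper's reliance on pathwise uniqueness of the mild solution.
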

		\begin{proof}
			From Theorems \ref{exismildp1} and \ref{exismildp3}, it follows that  Problem \eqref{reg} has a pathwise unique global mild solution, which we shall henceforth denote by $v^\theta.$\\
			Consequently, the equation
			\begin{equation}\label{1}
				\displaystyle v^\theta(r)=\mathcal{S}(r)v^\theta(0)+ \int_0^r \mathcal{S}(r-s)\mathcal{F}^\theta(s)ds+\int_0^r \int_{\mathbb{Z}} \mathcal{S}(r-s)\overline{G}(z,x,s-) \tilde{N}(ds,dz), \quad \forall r \in (0,T),
			\end{equation}
			holds $\mathbb{P}$\text{-a.s.}
			
			By applying the operator $\mathcal{A}$ on both sides of \eqref{1}, and integrating from $0$ to $t\;(T>t>r),$ we obtain  
			\begin{equation}\label{22}
				\int_0^t \mathcal{A} \displaystyle v^\theta(r) dr=\int_0^t \mathcal{A} \mathcal{S}(r)v^\theta(0)dr+\int_0^t \mathcal{A} \int_0^r \mathcal{S}(r-s)\mathcal{F}^\theta(s)dsdr+\int_0^t \mathcal{A}\int_0^r \int_{\mathbb{Z}} \mathcal{S}(r-s)\overline{G}(z,x,s-) \tilde{N}(ds,dz)dr.
			\end{equation}
			Proceeding by the same techniques used in the proof of Lemma \ref{eq:firstlemma}, we deduce that Equality \eqref{22} is well-defined. By using Hille's theorem on the operator $\mathcal{A},$ Equality \eqref{22} becomes 
			$$
			\int_0^t \mathcal{A} \displaystyle v^\theta(r) dr=\int_0^t \mathcal{A} \mathcal{S}(r)v^\theta(0)dr+\int_0^t \int_0^r \mathcal{A}  \mathcal{S}(r-s)\mathcal{F}^\theta(s)dsdr+\int_0^t \int_0^r \int_{\mathbb{Z}} \mathcal{A} \mathcal{S}(r-s)\overline{G}(z,x,s-) \tilde{N}(ds,dz)dr.
			$$
			By using the stochastic version of Fubini's theorem \cite[Lemma A.1.1]{mandrekar2015stochastic} and the commutativity of $(\mathcal{S}(t))_{t\geq0}$ with $\mathcal{A},$ we acquire that
			\begin{align}\label{eq}
				\int_0^t \mathcal{A} \displaystyle v^\theta(r) dr&=\int_0^t  \mathcal{S}(r) \mathcal{A} v^\theta(0)dr+\int_0^t \int_s^t  \mathcal{S}(r-s)\mathcal{A} \mathcal{F}^\theta(s)drds+\int_0^t  \int_{\mathbb{Z}} \int_s^t  \mathcal{S}(r-s) \mathcal{A}\overline{G}(z,x,s-) dr \tilde{N}(ds,dz).
			\end{align}
			Keeping in mind that $$v^\theta(0),\mathcal{F}^\theta(.), \overline{G}(z,x,.) \in \mathcal{D}(\mathcal{A}),\quad \forall (z,x) \in \mathbb{Z} \times \mathcal{U},$$ it holds that 
			\begin{equation}\label{tj1}
				\int_0^t  \mathcal{S}(r) \mathcal{A} v^\theta(0)dr= \mathcal{S}(t)  v^\theta(0)-v^\theta(0),
			\end{equation}
			\begin{equation}\label{tj2}
				\int_s^t  \mathcal{S}(r-s)\mathcal{A} \mathcal{F}^\theta(s)dr=\mathcal{S}(t-s)\mathcal{F}^\theta(s)-\mathcal{F}^\theta(s),
			\end{equation}
			\begin{equation}\label{tj3}
				\int_s^t  \mathcal{S}(r-s) \mathcal{A}\overline{G}(z,x,s-) dr=\mathcal{S}(t-s)\overline{G}(z,x,s-)-\overline{G}(z,x,s-).
			\end{equation}
			By injecting \eqref{tj1}-\eqref{tj3} into \eqref{eq}, we obtain
			\begin{align*}
				\int_0^t \mathcal{A} \displaystyle v^\theta(r) dr&=\underbrace{\mathcal{S}(t)v^\theta(0)+\int_0^t \mathcal{S}(t-s) \mathcal{F}^\theta(s)ds+\int_0^t \int_\mathbb{Z} \mathcal{S}(t-s)\overline{G}(z,x,s-) \tilde{N}(ds,dz)}_{=v^\theta(t)}\\
				&-\left(v^\theta(0)+\int_0^t \mathcal{F}^\theta(s)ds+\int_0^t \int_\mathbb{Z}\overline{G}(z,x,s-) \tilde{N}(ds,dz)\right).
			\end{align*} 
			Thus
			$$
			\displaystyle v^\theta(t)=v^\theta(0)+\int_0^t \mathcal{A} v^\theta(s)ds +\int_0^t \mathcal{F}^\theta(s)ds+\int_0^t \int_{\mathbb{Z}} \overline{G}(z,x,s-) \tilde{N}(ds,dz).
			$$
			This concludes the proof.
		\end{proof}
		\begin{thm}\label{exisstrongp2p3}
			Let $u$ be the pathwise unique mild solution to Problem \eqref{eq:abs} given by Theorems \ref{exismildp1} and \ref{exismildp3}. Then, the following convergences hold:
			\begin{itemize}
				\item If the incidence function satisfies \ref{P1}, then
				$
				\underset{\theta \rightarrow +\infty}{\lim}\Vert u^\theta-u \Vert_{\mathbf{E}_{T,0}}^2=0.
				$
				\item If the incidence function satisfies \ref{P3}, then
				$
				\underset{\theta \rightarrow +\infty}{\lim}\Vert u^\theta-u \Vert_{\hat{\mathbf{E}}_{T,0}}^2=0.
				$
			\end{itemize}
			Here, $u^\theta$ is the pathwise unique global strong solution of the following problem: 
			\begin{equation}\label{approximativeprob}
				\begin{cases}
					\displaystyle du(t)=\left(\mathcal{A}u(t)+\theta \mathcal{R}\mathcal{F}(u(t)\right) dt+\int_{\mathbb{Z}} \mathcal{G}(z,x,u(t-)) \tilde{N}(dt,dz),\\
					u(0)=\theta \mathcal{R} u_0.
				\end{cases}
			\end{equation}
		\end{thm}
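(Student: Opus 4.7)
The plan is to split the argument into two stages: first, establish that $u^\theta$ exists as a pathwise unique strong solution of \eqref{approximativeprob}; second, derive a Gronwall-type estimate on $u^\theta-u$ in which the vanishing of the data term and a Yosida-type correction term follow from Hille--Yosida and dominated convergence.

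For the first stage, I observe that the regularised drift $u\mapsto \theta\mathcal{R}\mathcal{F}(u)$ inherits the global (resp.\ local) Lipschitz property of $\mathcal{F}$ with an even better constant, since by Hille--Yosida $\|\theta\mathcal{R}\|\le 1$ on $\mathbb{L}^p(\mathcal{U})$. Hence the Banach fixed-point argument of Theorem~\ref{exismildp1} (resp.\ the truncation scheme of Theorem~\ref{exismildp3}) applies verbatim with $\theta\mathcal{R}\mathcal{F}$ in place of $\mathcal{F}$, producing a pathwise unique mild solution $u^\theta$ of \eqref{approximativeprob}. Because $\theta\mathcal{R}$ maps its domain into $\mathcal{D}(\mathcal{A})$ and $u^\theta_0=\theta\mathcal{R}u_0\in\mathcal{D}(\mathcal{A})$, the drift $\theta\mathcal{R}\mathcal{F}(u^\theta(s))$ and the initial value belong to $\mathcal{D}(\mathcal{A})$; thus the argument used in Lemma~\ref{vtettalemma} (applying $\mathcal{A}$ to the mild formulation, then interchanging $\mathcal{A}$ with the time and stochastic integrals via Hille's theorem and stochastic Fubini) promotes this mild solution to a strong one.

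For the second stage, I would write
\begin{align*}
u^\theta(t)-u(t) &= \mathcal{S}(t)\bigl(\theta\mathcal{R}u_0-u_0\bigr)+\int_0^t\mathcal{S}(t-s)\bigl[\theta\mathcal{R}\mathcal{F}(u^\theta(s))-\mathcal{F}(u(s))\bigr]ds\\
&\quad+\int_0^t\int_{\mathbb{Z}}\mathcal{S}(t-s)\bigl[\mathcal{G}(z,x,u^\theta(s-))-\mathcal{G}(z,x,u(s-))\bigr]\tilde{N}(ds,dz),
\end{align*}
and decompose the drift difference as
\begin{equation*}
\theta\mathcal{R}\mathcal{F}(u^\theta(s))-\mathcal{F}(u(s))=\theta\mathcal{R}\bigl(\mathcal{F}(u^\theta(s))-\mathcal{F}(u(s))\bigr)+(\theta\mathcal{R}-\mathcal{I})\mathcal{F}(u(s)).
\end{equation*}
Taking the norm appropriate to each case ($\mathcal{D}(\mathcal{A})$ under \ref{P1}, $\mathbb{L}^\infty(\mathcal{U})$ under \ref{P3}), then expectations, and applying the It\^o isometry \eqref{eq:isometry}, Cauchy--Schwarz, the bound $\|\theta\mathcal{R}\|\le 1$ and the Lipschitz estimates for $\mathcal{F}$ and $\mathcal{G}$ exactly as in \eqref{eq:contra_est}, one obtains
\begin{equation*}
\mathbb{E}\|u^\theta(t)-u(t)\|^2\le C\Bigl(\|\theta\mathcal{R}u_0-u_0\|^2+\int_0^t\mathbb{E}\|(\theta\mathcal{R}-\mathcal{I})\mathcal{F}(u(s))\|^2 ds\Bigr)+C\int_0^t\mathbb{E}\|u^\theta(s)-u(s)\|^2 ds.
\end{equation*}
Gronwall's lemma then reduces the problem to showing that the two terms in the parentheses tend to zero as $\theta\to+\infty$. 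The first is immediate from the scalar Hille--Yosida statement $\theta\mathcal{R}y\to y$. For the second, pointwise-in-$s$ convergence again follows from Hille--Yosida, while the domination $\|(\theta\mathcal{R}-\mathcal{I})\mathcal{F}(u(s))\|\le 2\|\mathcal{F}(u(s))\|$ combined with the fact that $\mathcal{F}(u(\cdot))$ is integrable over $[0,T]$ (as shown in the proof of Lemma~\ref{eq:firstlemma}) justifies passing the limit inside the integral by dominated convergence.

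The main technical obstacle lies in making the convergence $(\theta\mathcal{R}-\mathcal{I})\mathcal{F}(u(s))\to 0$ genuine in the two distinct norms required. Under \ref{P1}, one must verify $\theta\mathcal{R}y\to y$ in the graph norm of $\mathcal{D}(\mathcal{A})$, which I would obtain from the commutation $\mathcal{A}(\theta\mathcal{R}y)=\theta\mathcal{R}(\mathcal{A}y)$ and the scalar Yosida convergence applied separately to $y$ and to $\mathcal{A}y$. Under \ref{P3}, the convergence is in $\mathbb{L}^\infty(\mathcal{U})$, where strong continuity of $(\mathcal{S}(t))_{t\ge 0}$ is delicate; here I would combine the localisation via the stopping times $\tau_M$ from the proof of Theorem~\ref{exismildp3} with the growth bound \eqref{eq:growthoper} to dominate uniformly and then pass to the limit $M\to+\infty$ after $\theta\to+\infty$.
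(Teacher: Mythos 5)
Your proposal is correct in spirit but takes a genuinely different route from the paper. The paper never runs a Gronwall estimate on the semilinear problem \eqref{approximativeprob}: it freezes the coefficients along the already-constructed mild solution $u$, introducing the auxiliary problems \eqref{reg} and \eqref{reglim}, whose drifts are the \emph{given} processes $\theta\mathcal{R}\mathcal{F}(u(t))$ and $\mathcal{F}(u(t))$ and whose jump coefficients are both $\mathcal{G}(z,x,u(t-))$. Since the noise terms coincide, the stochastic integrals cancel exactly in the difference $v-v^\theta$, and the convergence $\Vert v^\theta-v\Vert_{\mathbf{E}_{T,0}}\to 0$ follows from the pointwise Yosida convergence \eqref{fthetaest} and dominated convergence alone --- no Lipschitz estimate on $\mathcal{F}$, no It\^o isometry, no Gronwall; Lemma \ref{vtettalemma} gives strong solvability of the frozen problem, and the theorem is concluded by identifying $v=u$ and $v^\theta=u^\theta$ via pathwise uniqueness (\eqref{eq:uequalsvps2}). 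You instead attack \eqref{approximativeprob} head-on: re-run the fixed-point/truncation construction for the regularized drift, promote to a strong solution as in Lemma \ref{vtettalemma}, then close a Gronwall estimate using the splitting $\theta\mathcal{R}(\mathcal{F}(u^\theta)-\mathcal{F}(u))+(\theta\mathcal{R}-\mathcal{I})\mathcal{F}(u)$, the isometry \eqref{eq:isometry} and $\Vert\theta\mathcal{R}\Vert\le 1$. What your route buys is that it proves the statement exactly as phrased, with $u^\theta$ solving the genuinely semilinear problem, and so avoids the identification step \eqref{eq:uequalsvps2}, which is the thinnest point of the paper's argument (there the drift and noise of \eqref{approximativeprob} are evaluated at the unknown, not at $u$). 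What the paper's route buys is that it needs no Lipschitz control on $\mathcal{F}$ at all, so \ref{P1} and \ref{P3} are handled uniformly, whereas your Gronwall step under \ref{P3} only has a local Lipschitz constant and genuinely requires the stopping-time localization you merely sketch in the last sentence, with the limits taken in the order $\theta\to+\infty$ then $M\to+\infty$ using the uniform moment bound coming from \eqref{eq:growthoper}.

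One technical correction: your plan to obtain $\theta\mathcal{R}y\to y$ in the graph norm by commuting $\mathcal{A}$ with $\theta\mathcal{R}$ requires $y\in\mathcal{D}(\mathcal{A})$, and neither $u_0$ (which is only in $\mathbb{H}$ under \ref{P1}) nor $\mathcal{F}(u(s))$ need belong to $\mathcal{D}(\mathcal{A})$, since the superposition operator $\mathcal{F}$ does not preserve $\mathbb{W}^{2,p}$-regularity. This detour is unnecessary: if you estimate exactly as in \eqref{eq:contra_est}, the smoothing property \eqref{eq:regula} lets you pay only the $\mathbb{H}$-norm (resp. $\mathbb{L}^\infty$-norm) of $(\theta\mathcal{R}-\mathcal{I})\mathcal{F}(u(s))$ and of $\theta\mathcal{R}u_0-u_0$, for which the scalar Hille--Yosida convergence plus dominated convergence suffices, precisely as in the paper's treatment of \eqref{estimvvtetta}.
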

		\begin{proof}
			Let $T>0.$ We only treat the case of incidence functions satisfying \ref{P1}. The other case can be treated by similar arguments. From Theorem \ref{exismildp1}, there exists a pathwise unique global mild solution $v$ to Problem \eqref{reglim}. Consequently, 
			$$
			v(t)=\mathcal{S}(t)v(0)+\int_0^t \mathcal{S}(t-s)\overline{F}(s)ds+\int_0^t \int_{\mathbb{Z}} \mathcal{S}(t-s) \overline{G}(z,x,s-) \tilde{N}(ds,dz),\quad \mathbb{P}\text{-a.s,}\; \forall t \in(0,T).
			$$
			On the other hand, let $v^\theta$ be the pathwise unique global strong solution to Problem \eqref{reg}. Then, 
			$$
			v^\theta(t)=\mathcal{S}(t)v^\theta(0)+\int_0^t \mathcal{S}(t-s)\mathcal{F}^\theta(s)ds+\int_0^t \int_{\mathbb{Z}} \mathcal{S}(t-s) \overline{G}(z,x,s-) \tilde{N}(ds,dz),\quad \mathbb{P}\text{-a.s,}\; \forall t \in (0,T).
			$$
			Consequently
			\begin{align}\label{estimvvtetta}
				\Vert v-v^\theta \Vert_{\mathbf{E}_{T,0}}^2&=\bigg \Vert \int_0^. \mathcal{S}(.-s)\left(\overline{F}(s)-\mathcal{F}^\theta(s)\right)ds\bigg\Vert_{\mathbf{E}_{T,0}}^2 \leq  \mathbb{E}\int_0^T \Vert \overline{F}(s)-\mathcal{F}^\theta(s)\Vert_{\mathbb{H}}^2.
			\end{align}
			Now, remark that  
			\begin{equation}\label{fthetaest}
				\underset{\theta \rightarrow + \infty}{\lim}\mathcal{F}^\theta(t)= \overline{F}(t), \quad \forall t \in (0,T).
			\end{equation}
			Moreover
			$$
			\Vert \mathcal{F}^\theta(t) \Vert_{\mathbb{H}}\leq \Vert \mathcal{F}(u(t)) \Vert_{\mathbb{H}}.
			$$
			Thus, by employing the dominated convergence theorem, we obtain 
			$$
			\underset{\theta \rightarrow +\infty}{\lim} \mathbb{E} \int_0^T \Vert \overline{F}(s)-\mathcal{F}^\theta(s)\Vert_{\mathbb{H}}^2ds=0.
			$$
			From Inequality \eqref{estimvvtetta}, it follows that
			\begin{equation}\label{eq:convervtetta}
				\underset{\theta \rightarrow +\infty}{\lim}\Vert v^\theta-v \Vert_{\mathbf{E}_{T,0}}^2=0.
			\end{equation}
			Now, observe that by definitions of $\overline{F}$ and $\mathcal{F}^\theta$ and taking into account the pathwise uniqueness of mild and strong solutions, we obtain that
			\begin{equation}\label{eq:uequalsvps}
				u(t)=v(t), \quad \mathbb{P}\text{-a.s,}\; \forall t \in (0,T), 
			\end{equation}
			and
			\begin{equation}\label{eq:uequalsvps2}
				u^\theta(t)=v^\theta(t), \quad \mathbb{P}\text{-a.s,}\; \forall t \in (0,T). 
			\end{equation}
			Thereby 
			\begin{equation*}
				\underset{\theta \rightarrow +\infty}{\lim}\Vert u^\theta-u \Vert_{\mathbf{E}_{T,0}}^2=0.
			\end{equation*}
			This concludes the proof.
		\end{proof}
		
		\section{Biological feasibility of Model \eqref{eq:spdesirlevy}-\eqref{eq:init}}\label{section4}
		Now that the mathematical well-posedness of Model \eqref{eq:spdesirlevy}-\eqref{eq:init} has been addressed, we still have to address its biological feasibility, which consists of proving the positiveness of the pathwise unique mild solution. Such a property can be established in the framework of partial differential equations, by relying on the maximum principle. On the other hand, in the framework of stochastic partial differential equations driven by Gaussian noise, when the obtained solution is strong, one can proceed by applying an infinite dimensional suitable Itô rule \cite[Theorem 3.8]{curtain1970ito}. In this regard, we mention for instance the references \cite{chow2011explosive,lv2015impacts}, where the positiveness of strong solutions was established, in the aim of proving their blow-up at a finite time. The aforementioned approach is based on the following technical lemma:\\

		\begin{lem}(\cite[Lemma 3.1]{lv2015impacts})\label{lemmachn}
			Let $\epsilon>0$ and consider the following function:
			$$
			\eta_{\epsilon}(r):= \begin{cases}r^2-\dfrac{\epsilon^2}{6}, & r<-\epsilon, \\[2ex] -\dfrac{r^3}{\epsilon}\left(\dfrac{r}{2 \epsilon}+\dfrac{4}{3}\right), & -\epsilon \leq r<0, \\[2ex]
				0, & r \geq 0 .\end{cases}
			$$
			Then, $\eta_\epsilon$ enjoys the following properties:
			\begin{enumerate}
				\item $\eta_\epsilon$ is twice continuously differentiable.
				\item \label{prime} $\eta'_\epsilon(r)\leq 0, \; \forall r \in \mathbb{R}.$
				\item \label{secondder} $\eta''_\epsilon(r)\geq 0, \;\forall r \in \mathbb{R}.$
			\end{enumerate}
		\end{lem}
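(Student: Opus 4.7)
The proof is a direct calculus verification, performed piecewise on the three intervals where $\eta_\epsilon$ is defined by distinct formulas. First, I would compute the derivatives on each open piece. On $(-\infty,-\epsilon)$, one has $\eta_\epsilon'(r)=2r$ and $\eta_\epsilon''(r)=2$. On $(-\epsilon,0)$, expanding the middle expression as $-\frac{r^4}{2\epsilon^2}-\frac{4r^3}{3\epsilon}$ gives
\begin{equation*}
\eta_\epsilon'(r)=-\frac{2r^3}{\epsilon^2}-\frac{4r^2}{\epsilon},\qquad \eta_\epsilon''(r)=-\frac{6r^2}{\epsilon^2}-\frac{8r}{\epsilon}.
\end{equation*}
On $(0,+\infty)$ the function is identically zero, and hence so are all its derivatives.

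To secure assertion (1), I would match the one-sided limits of $\eta_\epsilon$, $\eta_\epsilon'$, and $\eta_\epsilon''$ at the two junction points $r=-\epsilon$ and $r=0$. At $r=-\epsilon$, both adjacent formulas deliver the common values $\tfrac{5\epsilon^2}{6}$, $-2\epsilon$, and $2$; at $r=0$, all three quantities are $0$ from both sides. Since the candidate derivatives extend continuously across these points, a standard argument via the mean value theorem upgrades the piecewise derivatives to genuine classical ones, yielding $\eta_\epsilon\in C^2(\mathbb{R})$.

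Assertions (2) and (3) are immediate on the outer intervals from the explicit formulas $\eta_\epsilon'(r)=2r<0$, $\eta_\epsilon''(r)=2>0$ on $(-\infty,-\epsilon)$, and $\eta_\epsilon'\equiv \eta_\epsilon''\equiv 0$ on $[0,+\infty)$. The only non-obvious range is the middle piece $[-\epsilon,0)$, where the tidy way to read off the signs is to factor
\begin{equation*}
\eta_\epsilon'(r)=-\frac{2r^{2}}{\epsilon^{2}}\bigl(r+2\epsilon\bigr),\qquad \eta_\epsilon''(r)=-\frac{2r}{\epsilon^{2}}\bigl(3r+4\epsilon\bigr).
\end{equation*}
For $r\in[-\epsilon,0)$, the factors $r+2\epsilon\geq\epsilon$ and $3r+4\epsilon\geq\epsilon$ are positive, while $-\frac{2r^{2}}{\epsilon^{2}}\leq 0$ and $-\frac{2r}{\epsilon^{2}}\geq 0$; the inequalities $\eta_\epsilon'(r)\leq 0$ and $\eta_\epsilon''(r)\geq 0$ follow immediately.

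I do not anticipate any real obstacle here: the quartic on $[-\epsilon,0)$ is engineered precisely so that gluing is $C^2$ at both junction points while preserving monotonicity and convexity. The only point demanding care is the bookkeeping at the matching, which I would carry out side-by-side to avoid sign slips. The factorisation step, while elementary, is the one place where a reader might otherwise wonder how the claimed signs arise, so I would highlight it explicitly in the write-up.
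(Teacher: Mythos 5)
Your proof is correct: the piecewise derivatives, the matching values $\tfrac{5\epsilon^2}{6}$, $-2\epsilon$, $2$ at $r=-\epsilon$ and $0,0,0$ at $r=0$, and the factorisations $\eta_\epsilon'(r)=-\tfrac{2r^2}{\epsilon^2}(r+2\epsilon)$, $\eta_\epsilon''(r)=-\tfrac{2r}{\epsilon^2}(3r+4\epsilon)$ all check out, so the stated sign properties and $C^2$ regularity follow. The paper itself gives no proof of this lemma (it is quoted directly from Lemma 3.1 of \cite{lv2015impacts}), and your direct piecewise verification is precisely the standard argument one would write for it.
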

		
		Since the existence of pathwise unique strong solutions to Problem \eqref{eq:abs} is not achieved, we proceed in a different manner. Namely, we construct a sequence of approximate problems for which the existence of strong solutions is assured by Lemma \ref{vtettalemma}. Then, we proceed by using  an alternative Itô formula, which was initially developed by Zhu and Brze{\'z}niak \cite[Theorem 3.5.3]{zhu2010study}, and then used in several other works (see e.g. \cite{zhu2016nonlinear,liang2014stochastic}). The sequence of approximate problems is chosen such that the estimation of the drift term and the use of Gronwall's inequality, as in \cite[Theorem 3.1]{lv2015impacts} and \cite[Theorem 2.1]{chow2009unbounded} is avoided. Then, we retrieve the desired property, in the case of mild solutions by a convergence argument. \\
		
		First, we construct a sequence of functions  
		$(\zeta_{\epsilon})_{\epsilon>0},$ such that the following assertions are satisfied:
		\begin{enumerate}
			\item \label{pp1} $\zeta_\epsilon$ is continuously differentiable for every $\epsilon>0.$
			\item \label{pp2} $\zeta_\epsilon(r)\eta'_{\epsilon}(r)=0,\; \forall \epsilon>0,\; \forall r\in \mathbb{R}.$
			\item \label{pp3} $\underset{\epsilon \rightarrow 0}{\lim}\eta_\epsilon(r)=r^+,\; \forall r \in \mathbb{R},$ where $r^+:=\max\{0,r\}$ denotes the positive part.
		\end{enumerate}
		The existence of such a sequence is assured by the following lemma:
		\begin{lem}\label{mi}
			Given $\epsilon>0,$ consider the following sequence of functions:
			$$
			\zeta_{\epsilon}(r):= \begin{cases}r, & r\geq\epsilon, \\[2ex] \dfrac{1}{\epsilon^3} r^4-\dfrac{3}{\epsilon^2} r^3+\dfrac{3}{\epsilon} r^2, & 0<r< \epsilon, \\[2ex]
				0, & r \leq 0.\end{cases}
			$$
			Then, $\zeta_\epsilon$ satisfies Assertions \ref{pp1}-\ref{pp3}.
		\end{lem}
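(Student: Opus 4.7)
The proof of Lemma \ref{mi} is essentially a verification exercise, so my plan is to check each of the three assertions directly from the piecewise definition.

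For assertion \ref{pp1}, I would examine the two gluing points $r=0$ and $r=\epsilon$. At $r=0$, the left piece gives $0$ and the middle piece gives $\frac{0}{\epsilon^3}-\frac{0}{\epsilon^2}+\frac{0}{\epsilon}=0$, so $\zeta_\epsilon$ is continuous there. Differentiating the middle piece yields $\zeta'_\epsilon(r)=\frac{4r^3}{\epsilon^3}-\frac{9r^2}{\epsilon^2}+\frac{6r}{\epsilon}$, which vanishes at $r=0$, matching the zero derivative of the left piece. At $r=\epsilon$, the middle piece evaluates to $\epsilon-3\epsilon+3\epsilon=\epsilon$, matching the right piece; and its derivative evaluates to $4-9+6=1$, matching the derivative of $r\mapsto r$. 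Since each piece is a polynomial (hence smooth) on its own open interval, continuous differentiability holds globally.

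For assertion \ref{pp2}, the key observation is that $\eta_\epsilon$ from Lemma \ref{lemmachn} is identically zero on $\{r\geq 0\}$, so $\eta'_\epsilon(r)=0$ whenever $r\geq 0$, while $\zeta_\epsilon(r)=0$ whenever $r\leq 0$. Thus on $\{r\leq 0\}$ the first factor vanishes, on $\{r\geq 0\}$ the second factor vanishes, and the product is identically zero on $\mathbb{R}$.

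For assertion \ref{pp3} (which, reading in context, should be the statement $\lim_{\epsilon\to 0}\zeta_\epsilon(r)=r^+$), I would split by sign of $r$. If $r\leq 0$ then $\zeta_\epsilon(r)=0=r^+$ for every $\epsilon>0$. If $r>0$ then for all $\epsilon<r$ we are in the regime $r\geq\epsilon$, so $\zeta_\epsilon(r)=r=r^+$, giving the limit as $\epsilon\to 0$. No step here presents any real obstacle; the only mild care is the derivative check at the seam $r=\epsilon$, where one must correctly add $4-9+6=1$ to confirm the $C^1$ match.
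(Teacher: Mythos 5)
Your verification is correct and follows essentially the same route as the paper, which simply constructs $\zeta_\epsilon$ from a fourth-degree polynomial on $(0,\epsilon)$ and leaves the $C^1$ matching at the seams as ``straightforward calculations.'' Your explicit checks at $r=0$ and $r=\epsilon$ (including the derivative sum $4-9+6=1$), together with the correct reading of Assertion (3) as $\lim_{\epsilon\to 0}\zeta_\epsilon(r)=r^+$, supply exactly the details the paper omits.
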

		\begin{proof}
			The proof is constructive. Denote by $\mathbf{P}_4[\mathbb{R}]$ the vector space of fourth-degree polynomials with real coefficients.
			Then, for $P \in \mathbf{P}_4[\mathbb{R}],$ define 
			$$
			\zeta_{\epsilon}(r):=\begin{cases}r, & r\geq\epsilon, \\[2ex] P(r), & 0<r< \epsilon,  \\[2ex]
				0, & r \leq 0.\end{cases}
			$$
			Then, clearly 
			$$\zeta_\epsilon(r)\eta'_{\epsilon}(r)=0,\; \forall \epsilon>0,\; \forall r\in \mathbb{R}.$$
			Additionally
			$$\underset{\epsilon \rightarrow 0}{\lim}\zeta_\epsilon(r)=r^+,\; \forall r \in \mathbb{R}.$$
			Now, choose $P \in \mathbf{P}_4[\mathbb{R}]$ such that $\zeta_{\epsilon}$ is continuously differentiable. The result follows by straightforward calculations.
		\end{proof}
		Based on the result of Lemma \ref{mi}, the sequence of approximate problems is constructed as follows. First, we consider the sequence of operators $\mathcal{F}_\epsilon$ and $\mathcal{G}_\epsilon$ defined by:
		\begin{itemize}
			\item In the case of incidence functions satisfying \ref{P1}:
			\begin{align*}
				\mathcal{F}_\epsilon\;:\; &\mathbb{H}\longrightarrow \mathbb{H}\\
				& u \mapsto \textbf{f}_\epsilon(u),
			\end{align*}
			and 
			\begin{align*}
				\mathcal{G}_\epsilon\;:&\; \mathbb{Z} \times \mathbb{H}\longrightarrow \mathbb{H}\\
				& (z,u) \mapsto (\textbf{g}_{\epsilon1}(z,u_1),\textbf{g}_{\epsilon2}(z,u_2),\textbf{g}_{\epsilon3}(z,u_3)).
			\end{align*}
			\item In the case of incidence functions satisfying \ref{P3}:
			\begin{align*}
				\mathcal{F}_\epsilon\;:\; &\mathbb{L}^\infty(\mathcal{U})\longrightarrow \mathbb{L}^\infty(\mathcal{U})\\
				& u \mapsto \textbf{f}_\epsilon(u),
			\end{align*}
			and 
			\begin{align*}
				\mathcal{G}_\epsilon\;:&\; \mathbb{Z} \times \mathbb{L}^\infty(\mathcal{U})\longrightarrow \mathbb{L}^\infty(\mathcal{U})\\
				& (z,u) \mapsto (\textbf{g}_{\epsilon1}(z,u_1),\textbf{g}_{\epsilon2}(z,u_2),\textbf{g}_{\epsilon3}(z,u_3)),
			\end{align*}
		\end{itemize}
		where $\forall i \in \{1,2,3\}:$ 
		\begin{align*}
			\left(\textbf{f}_\epsilon(u)\right)_i:\;&\mathcal{U}\; \longrightarrow \mathbb{R}\\
			& x \mapsto f_i(\zeta_\epsilon(u_1(x)),\zeta_\epsilon(u_2(x)),\zeta_\epsilon(u_3(x))), 
		\end{align*}
		and
		\begin{align*}
			\textbf{g}_{\epsilon i}(z,u_i):\;&\mathcal{U} \longrightarrow \mathbb{R}\\
			& x \mapsto g_i(z,x,\zeta_\epsilon(u_i(x))),
		\end{align*}
		where the notation $(v)_i$ stands for the $i$th component of a given vector-valued function $v.$\\
		Now, we set 
		\begin{equation}\label{eq:ftetaeps}
			\mathcal{F}_\epsilon^\theta(u(t)):= \theta \mathcal{R} \mathcal{F}_\epsilon(u(t)),\quad \forall t \in (0,T),
		\end{equation}
		and
		\begin{equation}
			u^\theta_0:=\theta \mathcal{R} u_0.
		\end{equation}
		Then, consider the following intermediate problem:
		\begin{equation}\label{eq:absinterm}
			\begin{cases}
				\displaystyle du(t)=\left(\mathcal{A}u(t)+\mathcal{F}^\theta_\epsilon(u(t))\right)dt+ \int_{\mathbb{Z}} \mathcal{G}_\epsilon(z,x,u(t-)) \tilde{N}(dt,dz),\\
				u(0)=u^\theta_0.
			\end{cases}
		\end{equation}
		\begin{rem}
			Proceeding by the same approach used in the proof of Lemma \ref{vtettalemma}, it can be shown that Problem \eqref{eq:absinterm} admits a pathwise unique global strong solution.
		\end{rem}
		\begin{lem}\label{eq:lemmaeps}
			Let $u$ be the pathwise unique global mild solution, to Problem \eqref{eq:abs}, given by Theorems \ref{exismildp1} and \ref{exismildp3}. Then, the following convergences hold:
			\begin{itemize}
				\item If the incidence function satisfies \ref{P1}, then
				$
				\underset{\epsilon \rightarrow 0}{\lim}\underset{\theta \rightarrow +\infty}{\lim}\Vert u_\epsilon^\theta-u^+ \Vert_{\mathbf{E}_{T,0}}^2=0.
				$
				\item If the incidence function satisfies \ref{P3}, then
				$
				\underset{\epsilon \rightarrow 0}{\lim}\underset{\theta \rightarrow +\infty}{\lim}\Vert u_\epsilon^\theta-u^+ \Vert_{\hat{\mathbf{E}}_{T,0}}^2=0.
				$
			\end{itemize}
			Here, $u^\theta_\epsilon$ is the pathwise unique global strong solution to Problem \eqref{eq:absinterm}. 
			\begin{proof}
				By using Assertion \ref{pp3} in Lemma \ref{mi}, the proof follows the exact approach used in the proof of Theorem \ref{exisstrongp2p3}. Thus, we omit it here.
			\end{proof}
		\end{lem}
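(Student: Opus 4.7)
The plan is to transfer the two-level convergence strategy of Theorem \ref{exisstrongp2p3} to the present setting, running the limit $\theta\to+\infty$ first (with $\epsilon$ held fixed) and the limit $\epsilon\to 0$ afterwards. The key idea is that, once the known mild solution $u$ supplied by Theorems \ref{exismildp1} and \ref{exismildp3} is used to \emph{freeze} the nonlinear coefficients, Problem \eqref{eq:absinterm} reduces to a chain of linear stochastic evolution equations to which the Itô isometry \eqref{eq:isometry} and the Hille-Yosida identity $\theta\mathcal{R}h\to h$ can be applied directly.

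First, for fixed $\epsilon>0$, I would set $\overline{F}_\epsilon(t):=\mathcal{F}_\epsilon(u(t))$ and $\overline{G}_\epsilon(t,z):=\mathcal{G}_\epsilon(z,\cdot,u(t))$. By Lemma \ref{vtettalemma}, the linearized problem
\begin{equation*}
dv_\epsilon^\theta(t)=\bigl(\mathcal{A}v_\epsilon^\theta(t)+\theta\mathcal{R}\overline{F}_\epsilon(t)\bigr)dt+\int_{\mathbb{Z}}\overline{G}_\epsilon(t-,z)\,\tilde{N}(dt,dz),\quad v_\epsilon^\theta(0)=\theta\mathcal{R}u_0,
\end{equation*}
admits a pathwise unique strong solution which, by pathwise uniqueness of mild solutions for \eqref{eq:absinterm}, is indistinguishable from $u_\epsilon^\theta$. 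Placing alongside it the unregularized companion with mild solution $v_\epsilon$ (same $\overline{F}_\epsilon,\overline{G}_\epsilon$ but with initial datum $u_0$ and no $\theta\mathcal{R}$), Itô isometry and Cauchy-Schwarz applied as in the estimate \eqref{eq:contra_est} yield
\begin{equation*}
\Vert u_\epsilon^\theta-v_\epsilon\Vert_{\mathbf{E}_{T,0}}^2\leq C\Bigl(\mathbb{E}\Vert\theta\mathcal{R}u_0-u_0\Vert_{\mathbb{H}}^2+\mathbb{E}\int_0^T\Vert\theta\mathcal{R}\overline{F}_\epsilon(s)-\overline{F}_\epsilon(s)\Vert_{\mathbb{H}}^2\,ds\Bigr),
\end{equation*}
and both terms vanish as $\theta\to+\infty$ by Hille-Yosida together with the dominated convergence theorem, the $L^2$-envelope being supplied by $\Vert\theta\mathcal{R}\Vert\leq 1$ and $\overline{F}_\epsilon\in L^2(\Omega\times(0,T))$.

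Next I would let $\epsilon\to 0$. Property \ref{pp3} of Lemma \ref{mi} gives $\zeta_\epsilon(r)\to r^+$ for every $r\in\mathbb{R}$, whence continuity of $f_i$ and $g_i$ yields $\overline{F}_\epsilon(t)\to\mathcal{F}(u^+(t))$ and $\overline{G}_\epsilon(t,z)\to\mathcal{G}(z,\cdot,u^+(t))$ pointwise. A second application of Itô isometry and dominated convergence then forces $v_\epsilon$ to converge, in $\mathbf{E}_{T,0}$ (respectively $\hat{\mathbf{E}}_{T,0}$), to the mild solution $w$ of the frozen problem driven by $\mathcal{F}(u^+)$ and $\mathcal{G}(\cdot,u^+)$. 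Since the operators $\mathcal{F}$ and $\mathcal{G}$ already incorporate the truncation $v\mapsto v\vee 0$ in their very definition, one has $\mathcal{F}(u^+)\equiv\mathcal{F}(u)$ and $\mathcal{G}(\cdot,u^+)\equiv\mathcal{G}(\cdot,u)$; the pathwise uniqueness of Definition \ref{eq:defmild} then identifies $w$ with $u^+$. A standard diagonal splicing of the two limits completes the proof.

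The main obstacle will be producing a \emph{uniform} (in both $\theta$ and $\epsilon$) $L^2$-envelope for the frozen forcing terms, which is what allows the dominated convergence theorem to commute with the Bochner and Poisson integrals. Under \ref{P1} this envelope is immediate from the global Lipschitz property and $u\in\mathbf{E}_{T,0}$; under \ref{P3} it requires invoking the linear growth bound \eqref{eq:growthoper} together with $u\in\hat{\mathbf{E}}_{T,0}$, and one must then carefully use the contraction $\Vert\theta\mathcal{R}\Vert\leq 1$ to keep the $\theta$-regularized quantities inside the same envelope across both passages to the limit.
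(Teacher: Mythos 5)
Your proposal is correct and follows essentially the same route as the paper, whose proof of Lemma \ref{eq:lemmaeps} is simply a pointer to the argument of Theorem \ref{exisstrongp2p3} combined with Assertion \ref{pp3} of Lemma \ref{mi}: freeze the coefficients along the known mild solution, pass to the limit $\theta \rightarrow +\infty$ via the Hille--Yosida convergence $\theta\mathcal{R}h \rightarrow h$, the Itô isometry and dominated convergence, then let $\epsilon \rightarrow 0$ using $\zeta_\epsilon(r)\rightarrow r^+$ and identify the limit through pathwise uniqueness together with the fact that $\mathcal{F}$ and $\mathcal{G}$ already act on positive parts. If anything, you supply more detail than the paper's one-line proof, namely the initial-datum term $\theta\mathcal{R}u_0-u_0$ and the uniform $L^2$ envelopes needed for dominated convergence under \ref{P1} and \ref{P3}.
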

		\begin{thm}
			Assume the incidence function satisfies \ref{P1} or \ref{P3}. Then, the corresponding pathwise unique global mild solution remains almost surely positive.
		\end{thm}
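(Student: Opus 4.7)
The plan is to apply the alternative Itô formula of Brze{\'z}niak and Zhu \cite[Theorem 3.5.3]{zhu2010study} to the functional $\Phi_\epsilon(v):=\sum_{i=1}^3\int_\mathcal{U}\eta_\epsilon(v_i(x))\,dx$ along the pathwise unique strong solution $u_\epsilon^\theta$ of the regularized problem \eqref{eq:absinterm}, with $\eta_\epsilon$ the $C^2$ cutoff from Lemma \ref{lemmachn}. Since $\eta_\epsilon\geq 0$ vanishes on $[0,+\infty)$ and $\eta_\epsilon(r)\to (r^-)^2$ as $\epsilon\to 0$, it suffices to show that $\mathbb{E}\,\Phi_\epsilon(u_\epsilon^\theta(t))\to 0$ in the iterated limit $\theta\to+\infty$, $\epsilon\to 0$; Fatou's lemma together with Lemma \ref{eq:lemmaeps} will then force the negative part of each component of the mild solution $u$ to vanish $\mathbb{P}$-almost surely.

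First, the regularized initial datum $u_0^\theta=\theta\mathcal{R}u_0$ is componentwise non-negative by the representation \eqref{eq:resolcharac} together with the positivity-preserving nature of the Neumann heat semigroup, so $\Phi_\epsilon(u_0^\theta)=0$. Writing out the Itô identity produces four families of terms, which I would dispatch as follows. The diffusion integrand, upon integration by parts under the homogeneous Neumann condition \eqref{eq:bound}, equals $-\int_\mathcal{U}\eta_\epsilon''(v_i)|\nabla v_i|^2\,dx\leq 0$ by property \ref{secondder} of Lemma \ref{lemmachn}. The stochastic integral against $\tilde N$ has zero expectation. The jump compensator integrand $\eta_\epsilon(v+\mathcal{C}_i\zeta_\epsilon(v))-\eta_\epsilon(v)-\eta_\epsilon'(v)\mathcal{C}_i\zeta_\epsilon(v)$ vanishes identically: its linear-in-jump piece is zero by property \ref{pp2} of Lemma \ref{mi}, and its finite-difference piece vanishes by case analysis — if $v\geq 0$ then $\eta_\epsilon(v)=0$ and $v+\mathcal{C}_i\zeta_\epsilon(v)\geq 0$ since $\mathcal{C}_i>-1$, while if $v<0$ then $\zeta_\epsilon(v)=0$. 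The remaining drift integrand $\int_\mathcal{U}\eta_\epsilon'(v_i)(\theta\mathcal{R}\mathbf{f}_\epsilon(u_\epsilon^\theta))_i\,dx$ splits as the identity contribution $\int_\mathcal{U}\eta_\epsilon'(v_i)(\mathbf{f}_\epsilon(u_\epsilon^\theta))_i\,dx$ plus a Yosida remainder. The identity contribution is pointwise non-positive after a componentwise check: the $-\mu\zeta_\epsilon$ and $-(\mu+\gamma)\zeta_\epsilon$ terms annihilate $\eta_\epsilon'$ by property \ref{pp2}; the incidence piece $F(\zeta_\epsilon v_1,\zeta_\epsilon v_2)\eta_\epsilon'(v_1)$ vanishes because $\eta_\epsilon'(v_1)\neq 0$ forces $v_1<0$, hence $\zeta_\epsilon(v_1)=0$, and both examples \eqref{eq:standard}, \eqref{eq:saturated} satisfy $F(0,\cdot)=0$; and the surviving contributions $\Lambda\eta_\epsilon'(v_1)$, $F(\zeta_\epsilon v_1,\zeta_\epsilon v_2)\eta_\epsilon'(v_2)$, $\gamma\zeta_\epsilon(v_2)\eta_\epsilon'(v_3)$ are each pointwise non-positive thanks to property \ref{prime} of Lemma \ref{lemmachn}.

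The main obstacle is the Yosida remainder $\int_\mathcal{U}\eta_\epsilon'(v_i)((\theta\mathcal{R}-I)\mathbf{f}_\epsilon(u_\epsilon^\theta))_i\,dx$, which is not sign-definite because $\mathcal{R}$ is non-local in $x$. To dispose of it I would take expectations in the Itô identity, use the uniform bound $\|\theta\mathcal{R}\|_{\mathcal{L}(\mathbb{H})}\leq 1$ together with the strong convergence $\theta\mathcal{R}w\to w$ in $\mathbb{H}$ supplied by Hille-Yosida, bound $\eta_\epsilon'$ uniformly on $\mathbb{R}$, and send $\theta\to+\infty$ by dominated convergence to obtain $\mathbb{E}\,\Phi_\epsilon(\tilde u_\epsilon(t))\leq 0$ where $\tilde u_\epsilon$ denotes the $\theta$-limit. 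Consequently $\tilde u_\epsilon\geq 0$ a.s., and letting $\epsilon\to 0$ Lemma \ref{eq:lemmaeps} identifies the iterated limit with $u^+$; since the same scheme also converges to the mild solution $u$ itself by pathwise uniqueness (Theorems \ref{exismildp1}, \ref{exismildp3}) applied to the limiting equation (in which $\zeta_\epsilon\to(\cdot)^+$ and $\theta\mathcal{R}\to I$), one concludes $u=u^+$, i.e., the componentwise non-negativity of $u$ $\mathbb{P}$-almost surely.
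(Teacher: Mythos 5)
Your proposal is correct and follows the same skeleton as the paper's proof: the regularized problem \eqref{eq:absinterm} with its strong solution $u_\epsilon^\theta$, the functional $\sum_i\int_\mathcal{U}\eta_\epsilon((u_\epsilon^\theta)_i)\,dx$, the Itô formula of \cite[Theorem 3.5.3]{zhu2010study}, the sign analysis based on Lemmas \ref{lemmachn} and \ref{mi} (Green's formula with the Neumann condition, $\eta_\epsilon''\geq 0$, $\eta_\epsilon'\leq 0$, $\zeta_\epsilon\eta_\epsilon'=0$), vanishing expectation of the martingale term, positivity of $\theta\mathcal{R}u_0$, and the passage to the mild solution via Lemma \ref{eq:lemmaeps}. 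The one place where you genuinely deviate is the drift term: the paper applies the annihilation identity $\zeta_\epsilon(r)\eta_\epsilon'(r)=0$ \emph{through} the resolvent, declaring $\eta_\epsilon'(u_\epsilon^\theta)\,\theta\mathcal{R}\bigl(-F(\zeta_\epsilon(\cdot),\zeta_\epsilon(\cdot))-\mu\zeta_\epsilon(\cdot)\bigr)$ (and the analogous $I_2$, $I_3$) to be identically zero, even though $\mathcal{R}$ is non-local and pointwise annihilation of the argument does not transfer to its resolvent image; you instead split $\theta\mathcal{R}\mathbf{f}_\epsilon=\mathbf{f}_\epsilon+(\theta\mathcal{R}-I)\mathbf{f}_\epsilon$, do the pointwise sign bookkeeping only on the identity part (where it is legitimate), and dispose of the Yosida remainder in the limit $\theta\to+\infty$. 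This buys a justification of precisely the step the paper glosses over, at the cost of an extra limiting argument in $\theta$ before sending $\epsilon\to 0$. Likewise, for the compensator you prove exact cancellation by a case analysis (using $\mathcal{C}_i>-1$ and $F(0,\cdot)=0$, both consistent with the paper's setting and examples), whereas the paper settles for a one-sided bound via Taylor's expansion and $\eta_\epsilon'\leq 0$, implicitly using $\mathcal{C}_i\zeta_\epsilon\geq 0$. One small repair to your limiting step: $\eta_\epsilon'$ is \emph{not} uniformly bounded on $\mathbb{R}$ (it equals $2r$ for $r<-\epsilon$), so instead of a uniform bound you should control the remainder by Cauchy--Schwarz, $\bigl|\int_\mathcal{U}\eta_\epsilon'((u_\epsilon^\theta)_i)\,((\theta\mathcal{R}-I)\mathbf{f}_\epsilon(u_\epsilon^\theta))_i\,dx\bigr|\leq 2\Vert u_\epsilon^\theta\Vert_{\mathbb{H}}\Vert(\theta\mathcal{R}-I)\mathcal{F}_\epsilon(u_\epsilon^\theta)\Vert_{\mathbb{H}}$, and then use the uniform second-moment bounds on $u_\epsilon^\theta$ together with $\Vert\theta\mathcal{R}\Vert\leq 1$ and the strong convergence $\theta\mathcal{R}w\to w$; with that adjustment the argument closes as you describe.
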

		\begin{proof}
			We only treat the case of incidence functions satisfying \ref{P1}. The other case can be treated based on the same arguments. Consider the following functional
			$$
			\mathcal{J}(u(t)):=\sum_{i=1}^3 \int_\mathcal{U} \eta_\epsilon(u_i(t,x)) dx.
			$$
			Denote $\eta_\epsilon(u(t,x))\overset{\Delta}{=}\left(\eta_\epsilon(u_1(t,x)),\eta_\epsilon(u_2(t,x)),\eta_\epsilon(u_3(t,x))\right).$ By applying Itô formula \cite[Theorem 3.5.3]{zhu2010study}, it follows that 
			\begin{align}\label{itoineq}
				\mathcal{J}(u_\epsilon^\theta(t))&=\mathcal{J}(u_\epsilon^\theta(0))+\int_0^t \langle \mathcal{A}u_\epsilon^\theta+\mathcal{F}^\theta_\epsilon(u_{\epsilon}^\theta(s)),\eta_\epsilon'(u_\epsilon^\theta(s)) \rangle_{\mathbb{H}\times \mathbb{H}}ds \nonumber\\
				&+\int_0^t \int_{\mathbb{Z}} \left( \eta_\epsilon(u_\epsilon^\theta(s)+\mathcal{G}_\epsilon(z,x,u_\epsilon^\theta(s)))-\eta_\epsilon(u_\epsilon^\theta(s))-\langle \mathcal{G}_\epsilon(z,x,u_\epsilon^\theta(s)),\eta_\epsilon'(u_\epsilon^\theta(s))\rangle_{\mathbb{H} \times \mathbb{H}}\right) \nu(dz)ds\nonumber\\
				&+\int_0^t \int_{\mathbb{Z}} \left(\langle \eta_\epsilon(u_\epsilon^\theta(s)+\mathcal{G}_\epsilon(z,x,u_\epsilon^\theta(s)))-\eta_\epsilon(u_\epsilon^\theta(s)),\eta_\epsilon'(u_\epsilon^\theta(s)) \rangle_{\mathbb{H} \times \mathbb{H}}\right) \tilde{N}(dt,dz).
			\end{align}
			We now proceed to estimate each term independently.
			
			On the one hand
			\begin{align}
				\int_0^t \langle \mathcal{A}u_\epsilon^\theta(s),\eta_\epsilon'(u_\epsilon^\theta(s)) \rangle_{\mathbb{H}\times \mathbb{H}}ds&=\sum_{i=1}^3 \int_0^t \int_{\mathcal{U}} d_i \Delta (u_{\epsilon}^\theta(s,x))_i \eta'_\epsilon((u_{\epsilon}^\theta(s,x))_i)dxds \nonumber \\
				&=\sum_{i=1}^3 \int_0^t \int_{\mathcal{U}} -d_i \vert \nabla (u_{\epsilon}^\theta(s,x))_i \vert^2 \eta''_\epsilon((u_{\epsilon}^\theta(s,x))_i)dxds \nonumber\\
				&\leq 0,
			\end{align}
			where we have used Green's formula together with Assertion \ref{secondder}. 
			
			On the other hand
			\begin{align*}
				\int_0^t \langle \mathcal{F}^\theta_\epsilon(u_{\epsilon}^\theta(s)),\eta_\epsilon'(u_\epsilon^\theta(s)) \rangle_{\mathbb{H}\times \mathbb{H}}ds&=\sum_{i=1}^3 \int_0^t \langle (\mathcal{F}^\theta_\epsilon(u_{\epsilon}^\theta(s)))_i,(\eta_\epsilon'(u_\epsilon^\theta(s)))_i \rangle_{L^2(\mathcal{U})\times L^2(\mathcal{U})}ds \\
				&=:I_1+I_2+I_3.
			\end{align*}
			By definition, it holds that 
			\begin{align*}
				I_1&:=\int_0^t \int_\mathcal{U} \big(\eta_\epsilon'((u_\epsilon^\theta(s,x))_1) \theta \left(\mathcal{R} \Lambda\right)(x)\\
				&+\underbrace{\eta_\epsilon'(u_\epsilon^\theta(s,x)) \theta \left(\mathcal{R}  \left(-F(\zeta_{\epsilon}((u_\epsilon^\theta(s))_1),\zeta_{\epsilon}((u_\epsilon^\theta(s))_2))-\mu \zeta_{\epsilon}((u_\epsilon^\theta(s))_1)\right)\right)(x)}_{=0}\big)dx ds \\
				&=\int_0^t \int_\mathcal{U}\eta_\epsilon'(u_\epsilon^\theta(s,x)) \theta\left(\int_0^{+\infty} \mathcal{S}(t) e^{-\theta t}  \Lambda(x) dt\right) dx ds\leq 0,
			\end{align*}
			where we have used Property \eqref{eq:resolcharac}, Assertion \ref{pp2} in Lemma \ref{mi}, Assertion \ref{prime} in Lemma \ref{lemmachn} and the positivity preserving property of the semigroup  $(\mathcal{S}(t))_{t\geq 0}$ (see e.g. \cite[Lemma 3.5.9]{cazenave1998introduction}). 
			
			By the same arguments, we obtain that
			$$
			I_2:=\int_0^t \int_\mathcal{U}\eta_\epsilon'((u_\epsilon^\theta(s,x))_2) \theta \left(\mathcal{R}  \left(F(\zeta_{\epsilon}((u_\epsilon^\theta(s))_1),\zeta_{\epsilon}((u_\epsilon^\theta(s))_2))-(\mu+\gamma) \zeta_{\epsilon}((u_\epsilon^\theta(s))_2)\right)\right)(x)dx ds=0,
			$$
			and 
			$$
			I_3:=\int_0^t \int_\mathcal{U}\eta_\epsilon'((u_\epsilon^\theta(s,x))_3) \left(\theta \mathcal{R} \left(\left(\gamma \zeta_{\epsilon}((u_\epsilon^\theta(s))_2)-\mu\zeta_{\epsilon}(u_\epsilon^\theta(s))_3\right)\right)\right)(x)dx ds=0.
			$$
			Consequently 
			\begin{align}
				\int_0^t \langle \mathcal{F}^\theta_\epsilon(u_{\epsilon}^\theta(s)),\eta'(u_\epsilon^\theta(s)) \rangle_{\mathbb{H}\times \mathbb{H}}ds\leq0.
			\end{align}
			Additionally, we infer that 
			\begin{align*}
				&\int_0^t \int_{\mathbb{Z}} \left( \eta_\epsilon(u_\epsilon^\theta(s,x)+\mathcal{G}_\epsilon(z,x,u_\epsilon^\theta(s,x)))-\eta_\epsilon(u_\epsilon^\theta(s,x))-\langle \mathcal{G}_\epsilon(z,x,u_\epsilon^\theta(s),\eta_\epsilon'(u_\epsilon^\theta(s)))\rangle_{\mathbb{H} \times \mathbb{H}}\right) \nu(dz)ds\\
				&=\int_0^t \int_{\mathbb{Z}} \left( \eta_\epsilon(u_\epsilon^\theta(s,x)+\mathcal{G}_\epsilon(z,x,u_\epsilon^\theta(s,x)))-\eta_\epsilon(u_\epsilon^\theta(s,x))\right)\nu(dz)ds\\
				&-\int_0^t \int_{\mathbb{Z}} \langle \mathcal{G}_\epsilon(z,x,u_\epsilon^\theta(s),\eta_\epsilon'(u_\epsilon^\theta(s)))\rangle_{\mathbb{H} \times \mathbb{H}}\nu(dz)ds\\
				&=:I_4-I_5.
			\end{align*}
			By employing Taylor's expansion, there exists $\alpha \in (0,1)$ such that 
			\begin{align*}
				I_4&=\int_0^t \int_{\mathbb{Z}} \left(\eta_\epsilon'(\alpha (u_\epsilon^\theta(s,x)+\mathcal{G}_\epsilon(z,x,u_\epsilon^\theta(s,x)))+(1-\alpha)(u_\epsilon^\theta(s,x)))\mathcal{G}_\epsilon(z,x,u_\epsilon^\theta(s,x))\right) \nu(dz)ds\\
				&=\sum_{i=1}^3 \int_0^t \int_{\mathbb{Z}} \eta_{\epsilon}'(\alpha ((u_\epsilon^\theta(s,x))_i+\mathcal{C}_i(z,x)\zeta_\epsilon ((u_\epsilon^\theta(s,x))_i))+(1-\alpha)(u_\epsilon^\theta(s,x))_i  \mathcal{C}_i(z,x)  \zeta_\epsilon((u_\epsilon^\theta(s,x))_i)\nu(dz)ds\\
				&\leq 0,
			\end{align*}
			where we have used Assertion \ref{prime}, stated in Lemma \ref{lemmachn}. \\
			By using Assertion \ref{pp2}, stated in Lemma \ref{mi}, we obtain that
			\begin{align*}
				I_5= \sum_{i=1}^3 \int_0^t \int_{\mathbb{Z}}  \mathcal{C}(z,x) \zeta_\epsilon((u_\epsilon^\theta(s,x))_i) \eta_\epsilon'((u_\epsilon^\theta(s,x))_i) \nu(dz)ds=0.
			\end{align*}
			Since by assumption $u_0 \geq 0.$ Then, by the definitions of $\mathcal{J}$ and $\eta_\epsilon,$ we acquire that 
			$$
			\mathcal{J}(u_\epsilon^\theta(0))=0.
			$$
			Thus, from Equality \eqref{itoineq}, we obtain 
			\begin{align}\label{itoineq2}
				\mathcal{J}(u_\epsilon^\theta(t))&\leq \int_0^t \int_{\mathbb{Z}} \left(\langle \eta_\epsilon(u_\epsilon^\theta(s)+\mathcal{G}_\epsilon(z,x,u_\epsilon^\theta(s)))-\eta_\epsilon(u_\epsilon^\theta(s)),\eta_\epsilon'(u_\epsilon^\theta(s)) \rangle_{\mathbb{H} \times \mathbb{H}}\right) \tilde{N}(dt,dz).
			\end{align}
			Evaluating the mathematical expectation on both sides of Inequality \eqref{itoineq2} yields 
			\begin{align*}
				\mathbb{E} \mathcal{J}(u_\epsilon^\theta(t))&\leq 0.
			\end{align*}
			Hence
			$$
			\mathbb{E} \int_\mathcal{U} \eta_{\epsilon}((u_\epsilon^\theta(t,x))_i) dx\leq 0, \quad \forall i \in \{1,2,3\}.
			$$
			By definition of $\eta_\epsilon,$ we acquire that 
			$$
			(u_\epsilon^\theta(t,x))_i \geq 0, \quad \forall i \in \{1,2,3\},\; \mathbb{P}\text{-a.s.}
			$$
			The proof is concluded by using Lemma \ref{eq:lemmaeps}.
		\end{proof}
		\section{Numerical simulations}\label{section5}
		The aim of this section is to exhibit the effect of pure-jump Lévy noise on the spatio-temporal dynamics of SIR-type epidemic models, and provide a comparison to Gaussian noise, which has been used in the previous literature. The spatial domain $\mathcal{U}$ is set to be $(0,6),$ while the time-horizon $(0,80)$ is considered. Additionally, we choose $\mathbb{Z} \subset (0,+\infty)$ such that $\nu(\mathbb{Z})=1$. Moreover, to simulate the start of an epidemic, the following initial conditions are considered:
		$$
		S(0,x)=0.9, \quad I(0,x)=0.1 \quad \text{and}\quad R(0,x)=0, \quad \forall x \in (0,6).
		$$ 
		The performed numerical simulations are based on the operator splitting method for the deterministic case, the Milstein method for the case of Gaussian noise, and Euler's method for the case of pure-jump Lévy noise. We refer to the references \cite{ahmed2020numerical,higham2001algorithmic,protter1997euler} for a detailed description of the aforementioned methods. \\
		
		The deterministic counterpart of Model \eqref{eq:spdesirlevy}-\eqref{eq:init} can be obtained by setting $\mathcal{C}_1=\mathcal{C}_2=\mathcal{C}_3=0.$ On the other hand, for the sake of convenience, we recall below the stochastic counterpart of Model \eqref{eq:spdesirlevy}-\eqref{eq:init} in the case of Gaussian noise.
		
		\begin{equation}\label{eq:spdesirgauss}
			\begin{cases}
				\begin{split}
					&\begin{split}
						d S(t, x)&=\left[d_1 \Delta S(t, x)+\Lambda(x)-\mu(x) S(t, x)-F(S(t,x),I(t,x))\right] dt \\
						&+\sigma_1 S(t,x) \;dW_1(t,x), \end{split} &\quad  (t,x)\in \mathbb{R}^{+} \times \mathcal{U},\\[1.3ex]
					& \begin{split} 
						dI(t, x)&=\left[d_2 \Delta I(t, x)-\mu(x) I(t, x)-\gamma(x) I(t, x)+{F(S(t,x),I(t,x))}\right] dt\\[1.3ex]
						&+\sigma_2 I(t,x) \;dW_2(t,x),\end{split}
					&\quad  (t,x)\in \mathbb{R}^{+} \times \mathcal{U},\\[1.3ex]
					& \begin{split} 
						dR(t,x)&=\left[d_3 \Delta R(t, x)-\mu(x) R(t, x)+{\gamma(x) I(t, x)}\right] dt\\[0.5ex]
						&+\sigma_3 R(t,x) \;dW_3(t,x),\end{split}
					&\quad  (t,x)\in \mathbb{R}^{+} \times \mathcal{U},
				\end{split}
			\end{cases}
		\end{equation}
		equipped with the usual homogeneous Neumann boundary conditions:
		\begin{equation*}
			\partial_\nu S(t, x)=\partial_\nu I(t, x)=\partial_\nu R(t, x)=0, \quad (t,x)\in \mathbb{R}^{+} \times \partial \mathcal{U},
		\end{equation*}
		and the positive initial conditions: 
		\begin{equation*}
			S(0,x)=S_0(x)\geq 0, \quad I(0,x)=I_0(x)\geq 0, \quad \text{and} \quad R(0,x)=R_0(x)\geq 0,\quad x\in \mathcal{U}.
		\end{equation*}
		Here, $W_1(t,x)$ and $W_2(t,x)$ are as considered in Section \ref{intro}, while $\sigma_1,\sigma_2,\sigma_3>0$ denote Gaussian noise intensities.

		In order to incorporate the two types of incidence rates distinguished in this paper, the following two examples are numerically simulated:
		\subsection{First example: standard incidence rate}
		In this example, the chosen incidence function is given by \eqref{eq:standard}, so that \ref{P1} is satisfied. The appointed values to the input parameters in this case are given in Table \ref{tab:assigned1}. 
		\begin{table}[H]
			\centering
			\setlength\extrarowheight{2.5pt}
			\caption{Assigned values to the remaining parameters}
			\label{tab:assigned1}
			\begin{tabular}{|l|l|lll}
				\cline{1-2}
				Parameter&Assigned value  &  &  &  \\ \cline{1-2}
				$\Lambda$& $0.5$  &  &  &  \\ \cline{1-2}
				$\beta$& $0.2$ &  &  &  \\ \cline{1-2}
				$\mu $& $0.3$ & &  &  \\ \cline{1-2}
				$\gamma$& $0.2$ &  &  &  \\ \cline{1-2}
				$\mathcal{C}_1, \mathcal{C}_2, \mathcal{C}_3$& $0.2$ &  &  &  \\ \cline{1-2}
				$\sigma_1, \sigma_2, \sigma_3$& $1$ &  &  &  \\ \cline{1-2}
			\end{tabular}
		\end{table}
		We indicate that the assigned values given in Table \ref{tab:assigned1}, are in the aim of illustrating the effect of pure-jump Lévy noise in the case: 
		$
		\mathcal{R}_0:=\dfrac{\Lambda \beta}{\mu \left(\mu+\gamma\right)}<1,
		$
		where $\mathcal{R}_0$ is the basic reproduction number of Model \eqref{eq:spdesirlevy}-\eqref{eq:init} in the absence of stochastic noise (see e.g. \cite[Section 5]{yang2020basic}). In this case, it is known that the free disease equilibrium point is globally asymptotically stable, meaning that the infection will be eradicated within the population \cite{luo2019global}. Figures \ref{figstandards}-\ref{figstandardr} show the obtained solution in the deterministic case, stochastic case with Gaussian noise and stochastic case with pure-jump Lévy noise. One can observe that pure-jump Lévy noise causes a delay in the eradication of the disease in comparison to the deterministic case and the stochastic case with Gaussian noise. Furthermore, from Fig. \ref{figstandardpaths}, it can be seen that pure-jump Lévy noise results in a discontinuity in the paths of the obtained solution, which captures the massive discontinuous changes in the spatio-temporal dynamics.
		\begin{figure}[H]
			\begin{minipage}{5cm}
				\includegraphics[height=4cm]{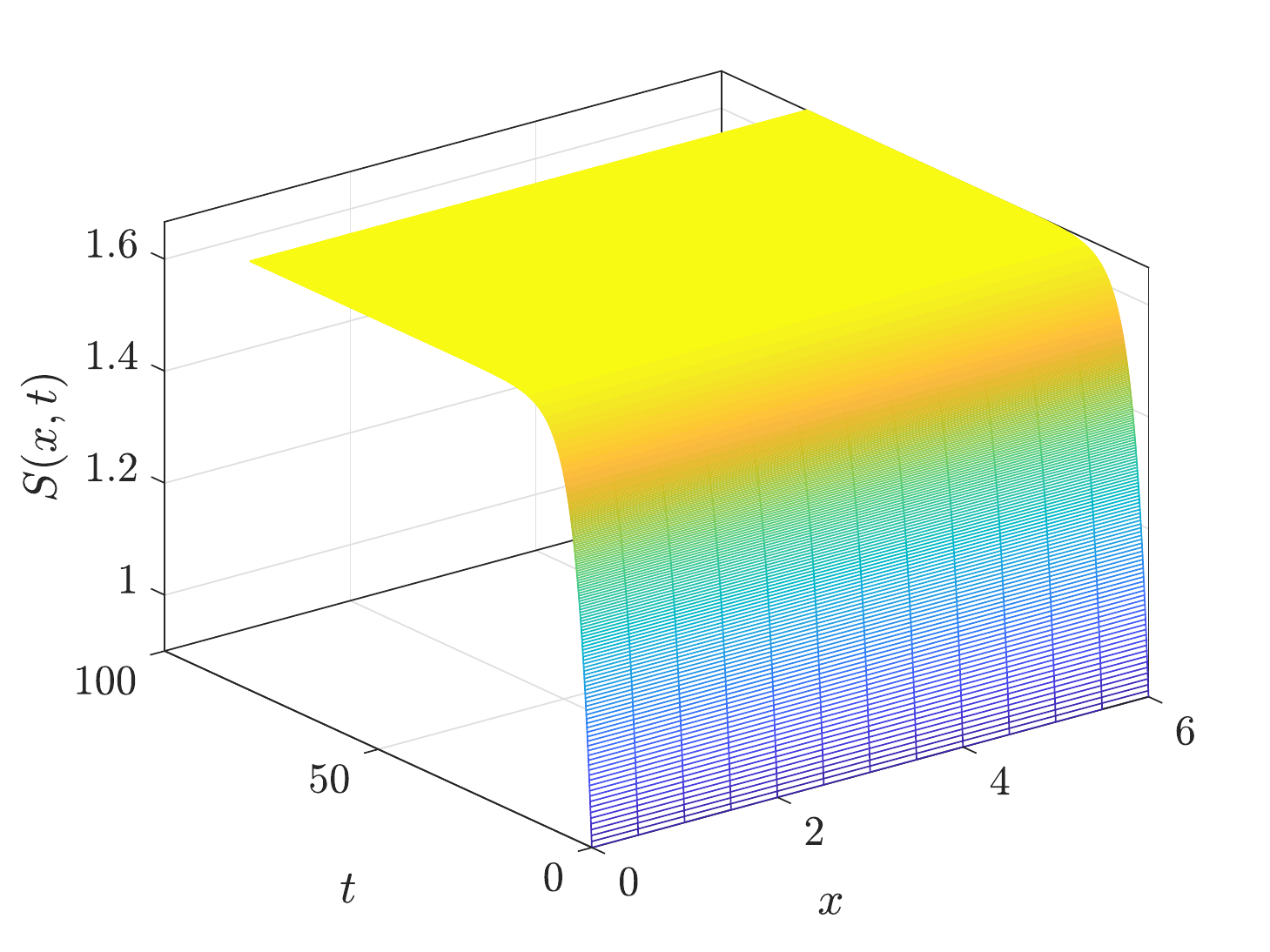}
			\end{minipage}
			\begin{minipage}{5cm}
				\includegraphics[height=4cm]{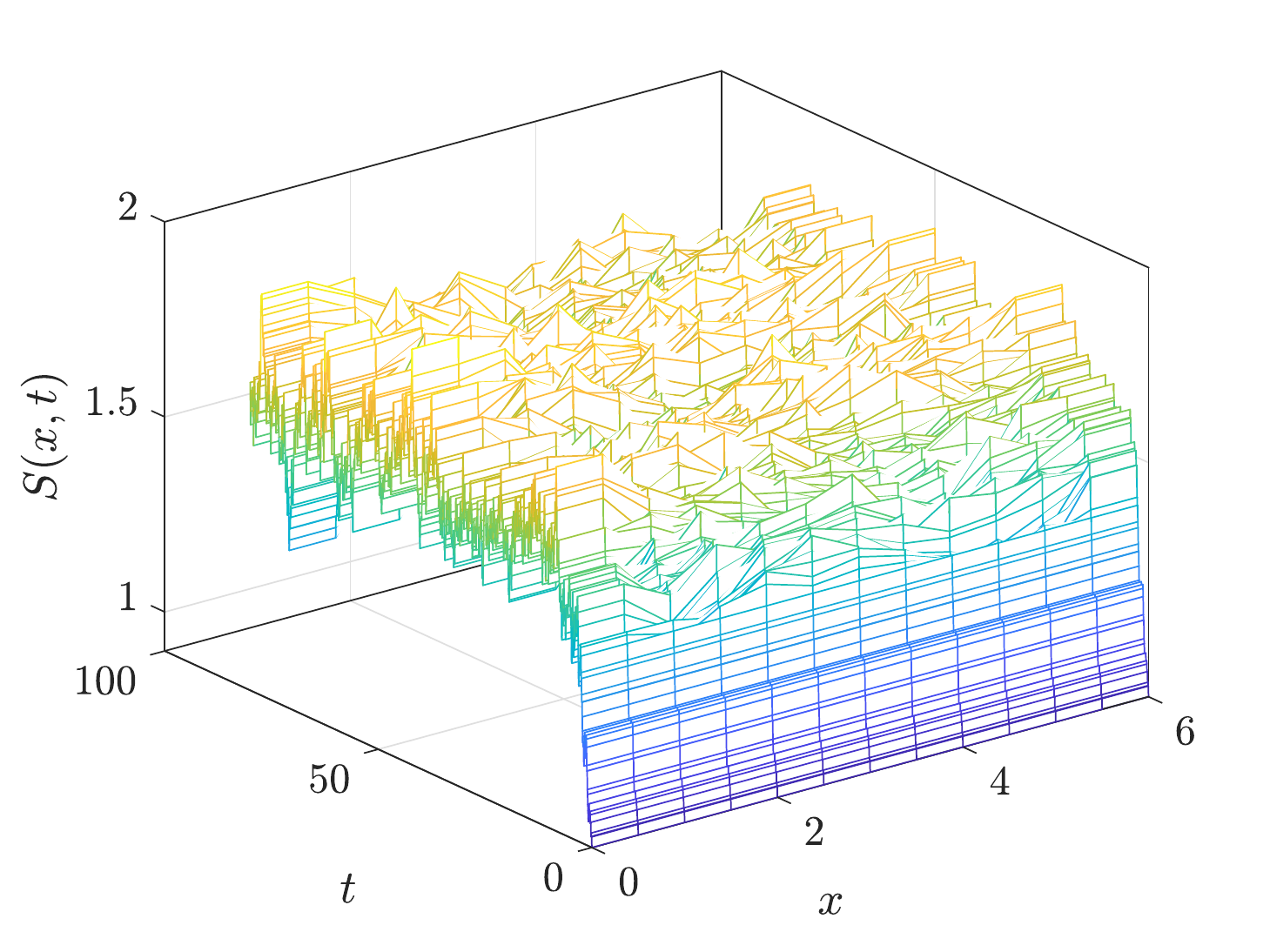}
			\end{minipage}
			\begin{minipage}{5cm}
				\includegraphics[height=4cm]{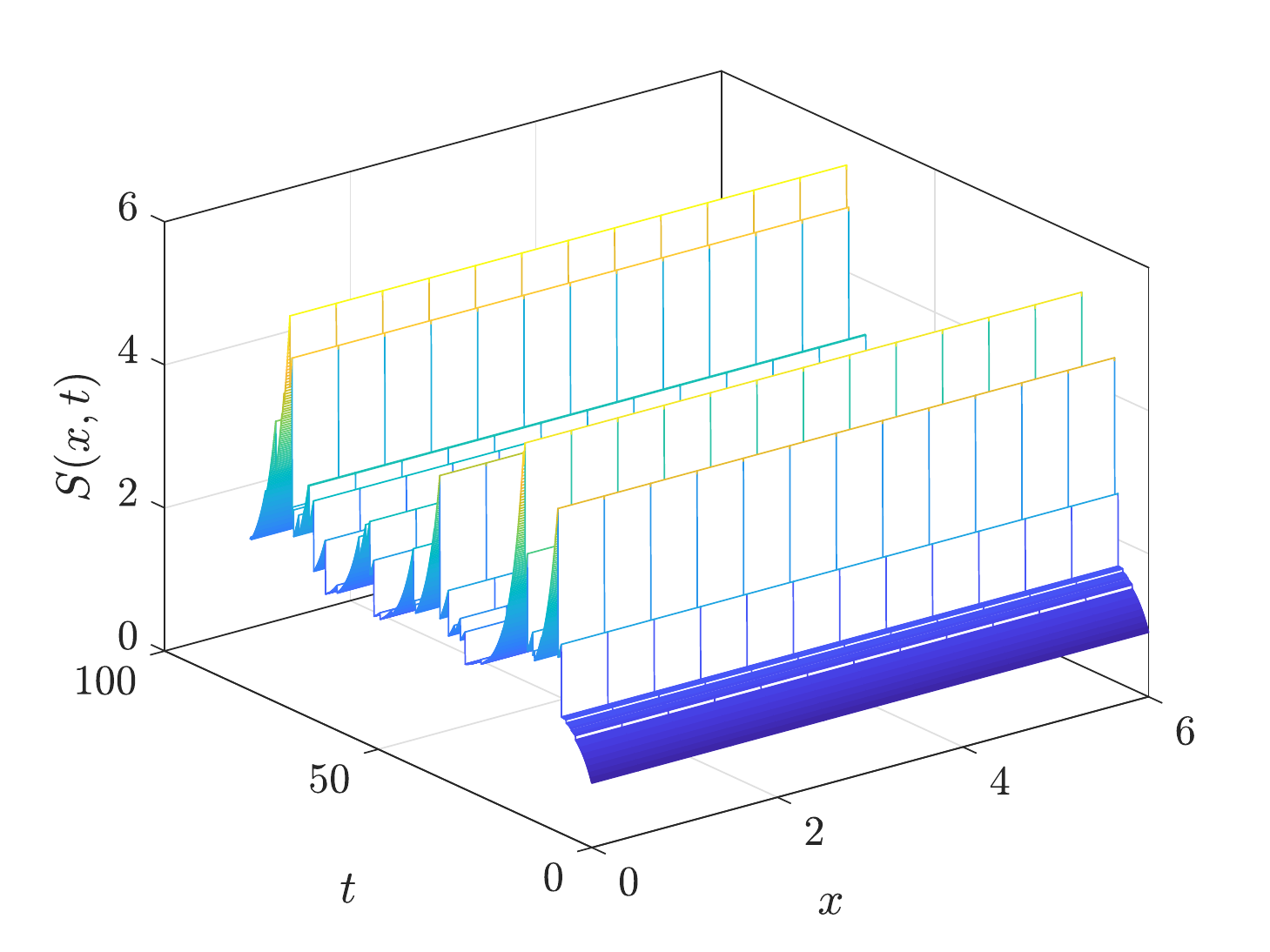}
			\end{minipage}
			\caption{Deterministic susceptible population (left), stochastic susceptible population with Gaussian noise (middle), stochastic susceptible population with pure-jump Lévy noise (right)}
			\label{figstandards}
		\end{figure}
		\begin{figure}[H]
			\begin{minipage}{5cm}
				\includegraphics[height=4cm]{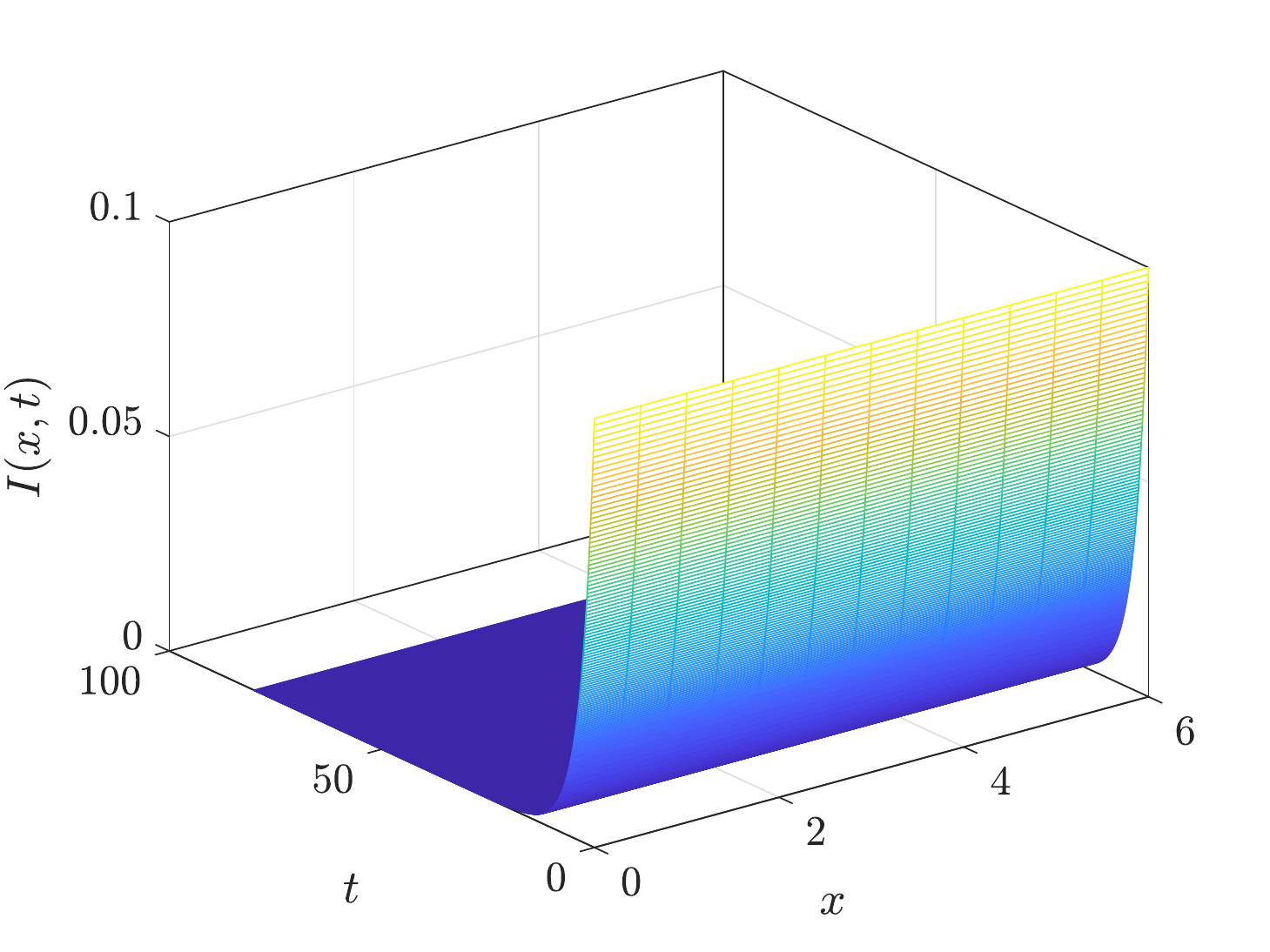}
			\end{minipage}
			\begin{minipage}{5cm}
				\includegraphics[height=4cm]{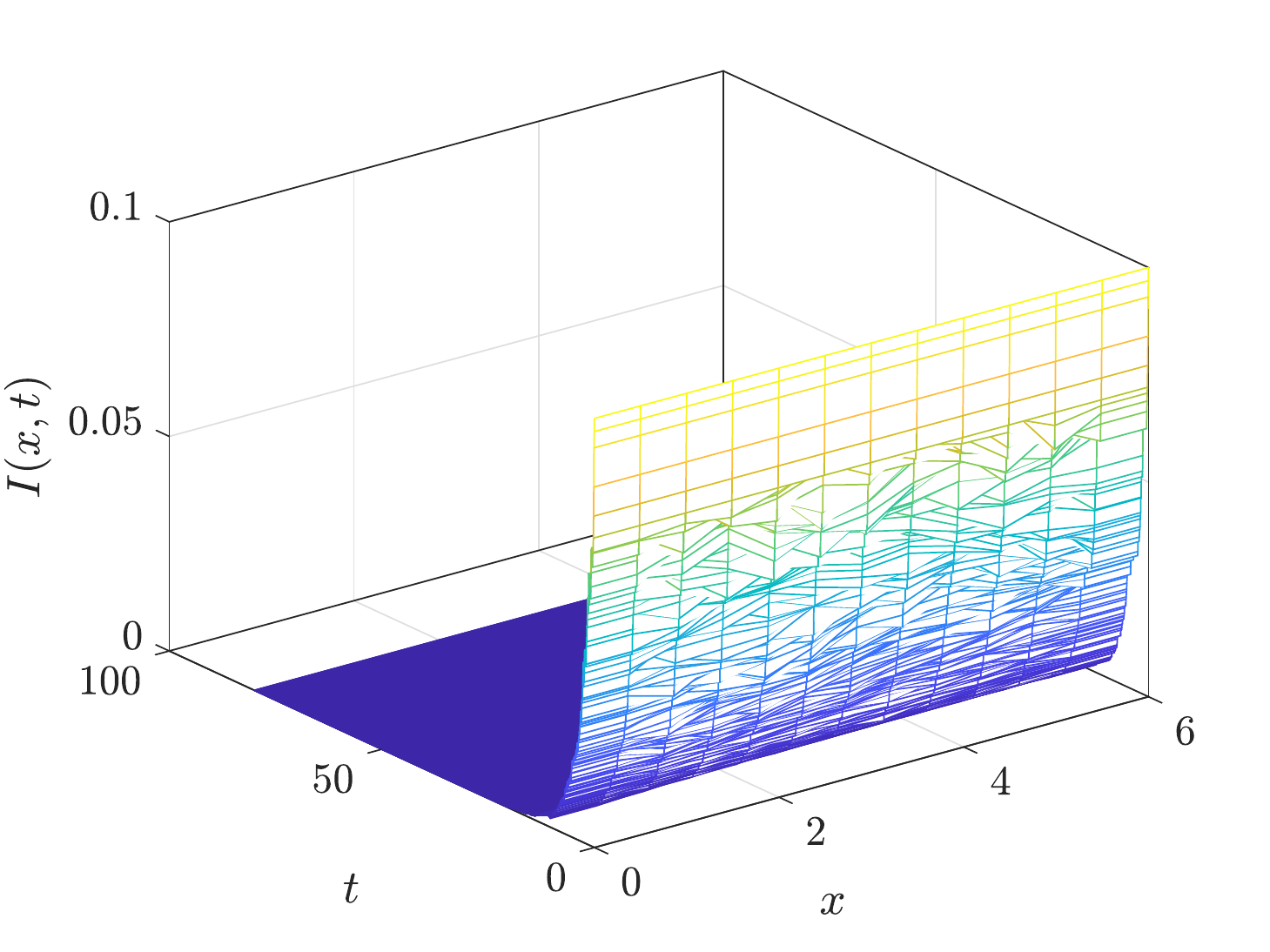}
			\end{minipage}
			\begin{minipage}{5cm}
				\includegraphics[height=4cm]{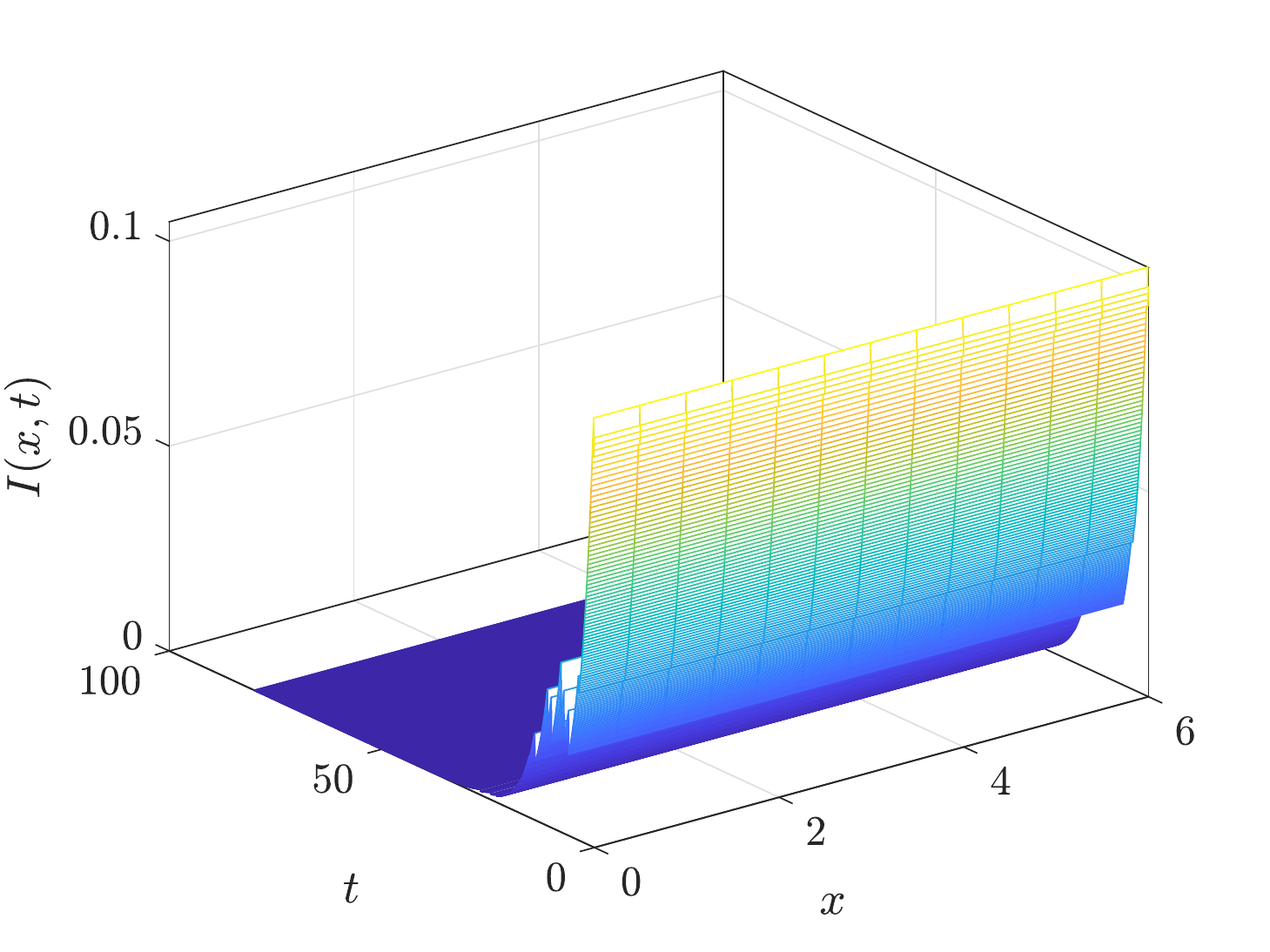}
			\end{minipage}
			\caption{Deterministic infected population (left), stochastic infected population with Gaussian noise (middle), stochastic infected population with pure-jump Lévy noise (right)}
			\label{figstandardi}
		\end{figure}
		\begin{figure}[H]
			\begin{minipage}{5cm}
				\includegraphics[height=4cm]{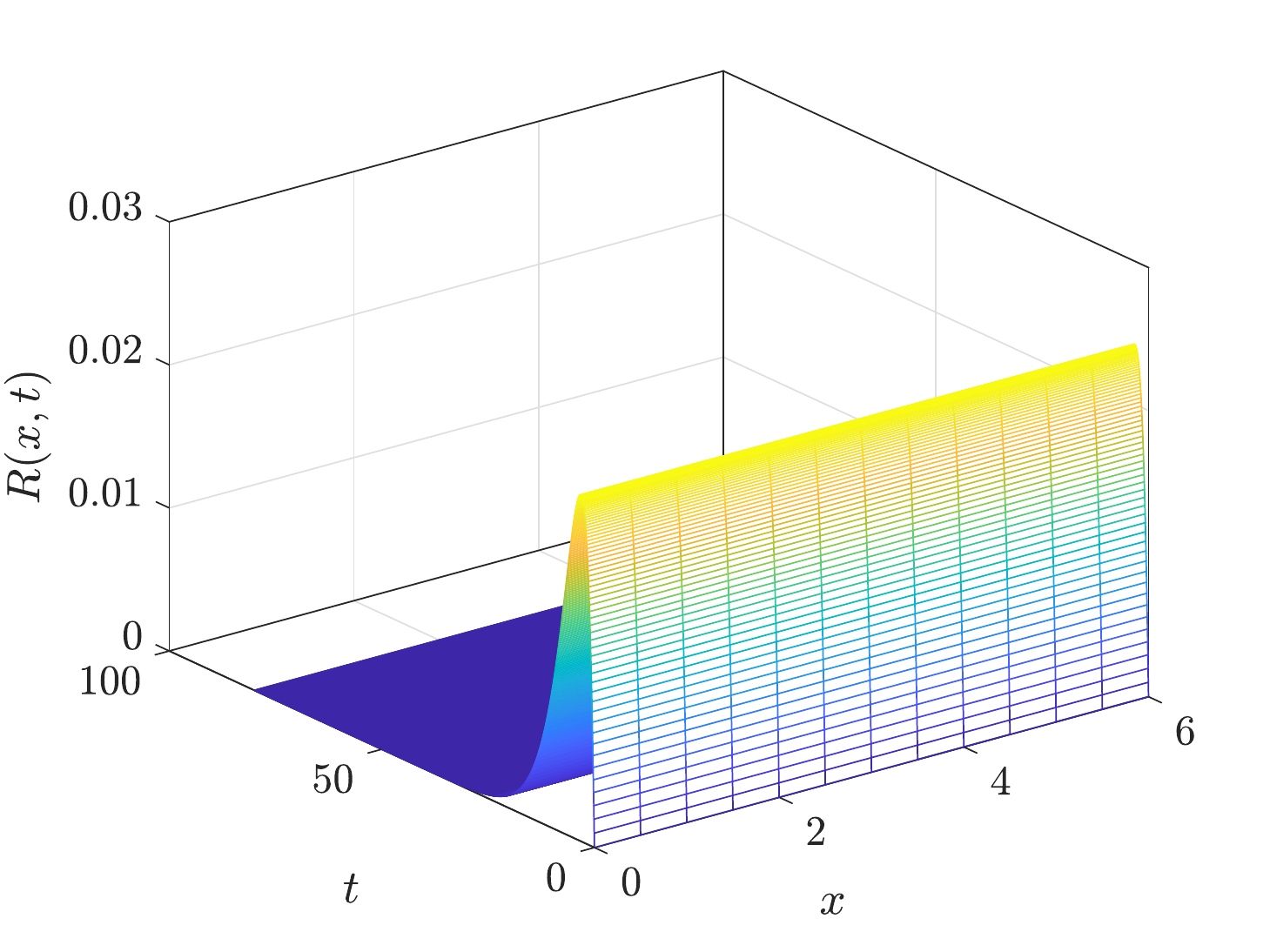}
			\end{minipage}
			\begin{minipage}{5cm}
				\includegraphics[height=4cm]{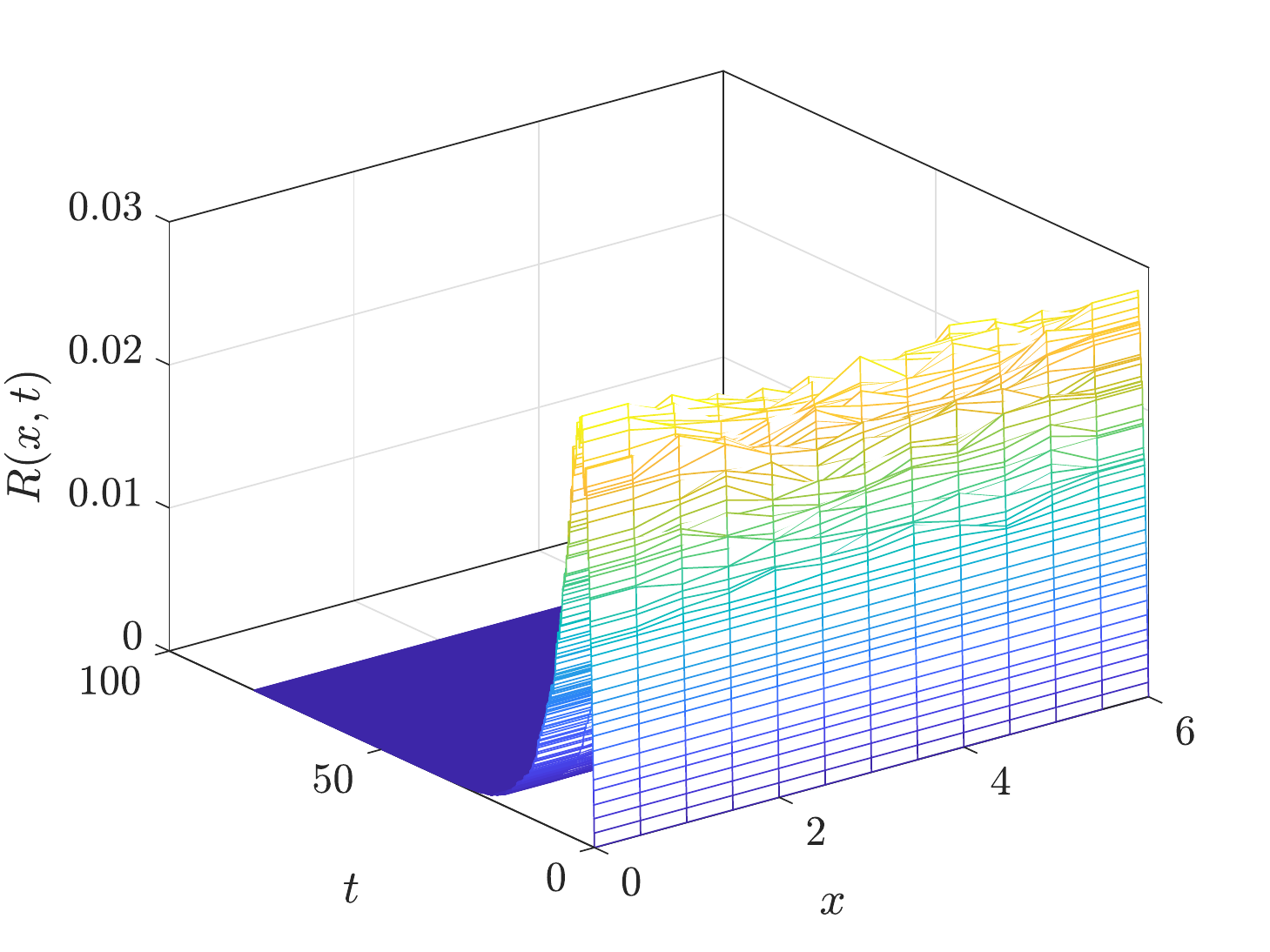}
			\end{minipage}
			\begin{minipage}{5cm}
				\includegraphics[height=4cm]{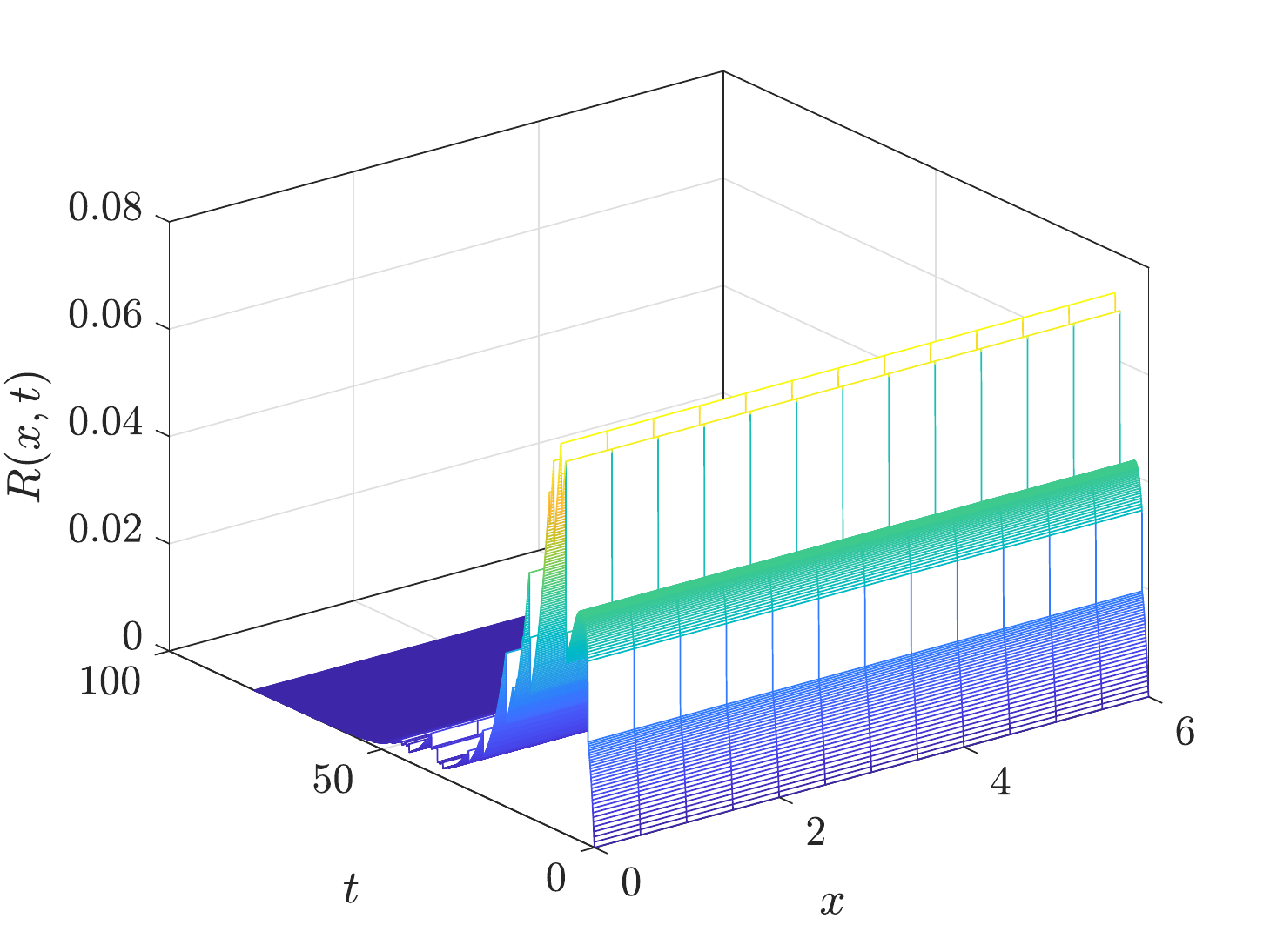}
			\end{minipage}
			\caption{Deterministic recovered population (left), stochastic recovered population with Gaussian noise (middle), stochastic recovered population with pure-jump Lévy noise (right)}
			\label{figstandardr}
		\end{figure}
		\begin{figure}[H]
			\begin{minipage}{5cm}
				\includegraphics[height=4.5cm]{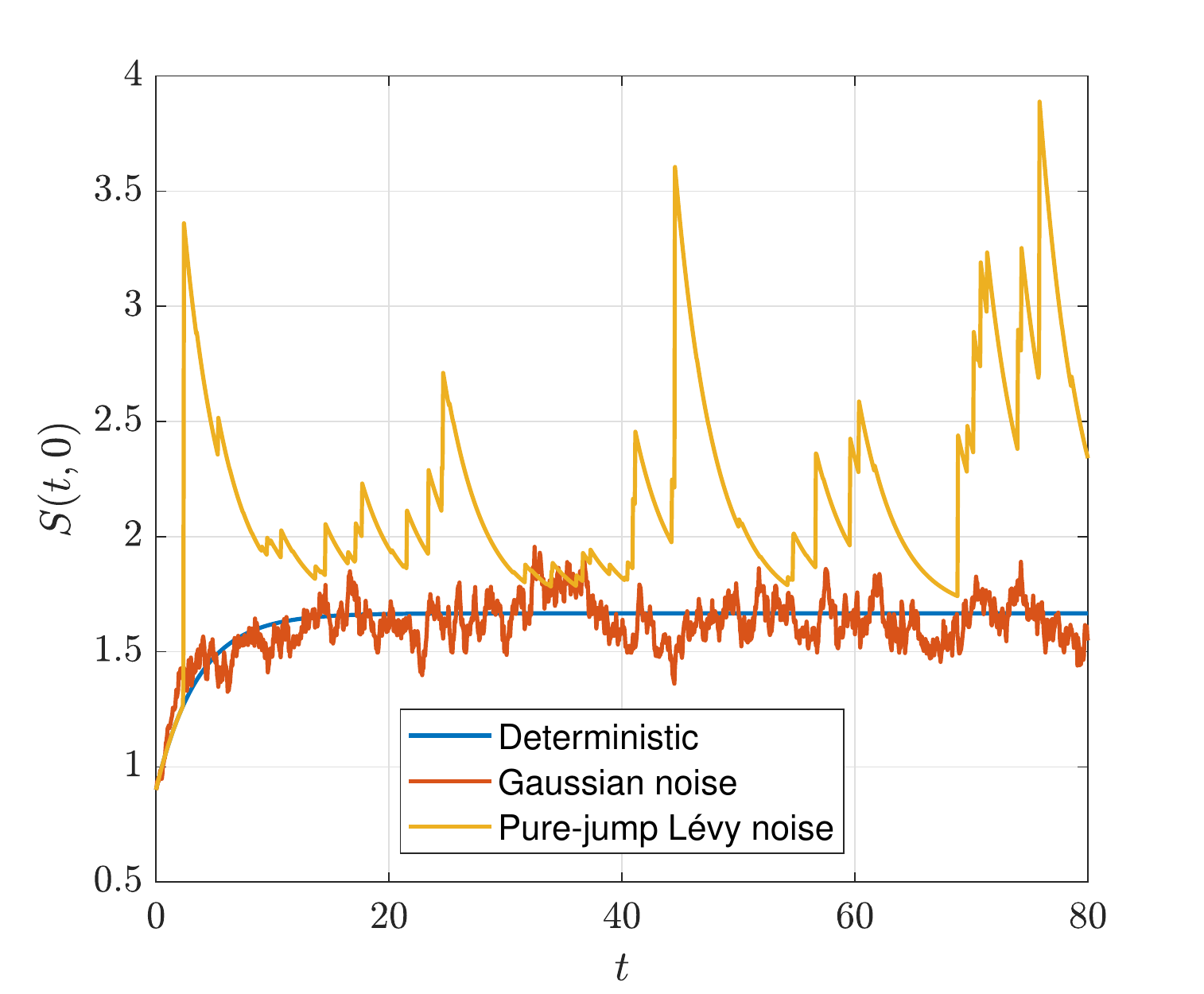}
			\end{minipage}
			\begin{minipage}{5cm}
				\includegraphics[height=4.5cm]{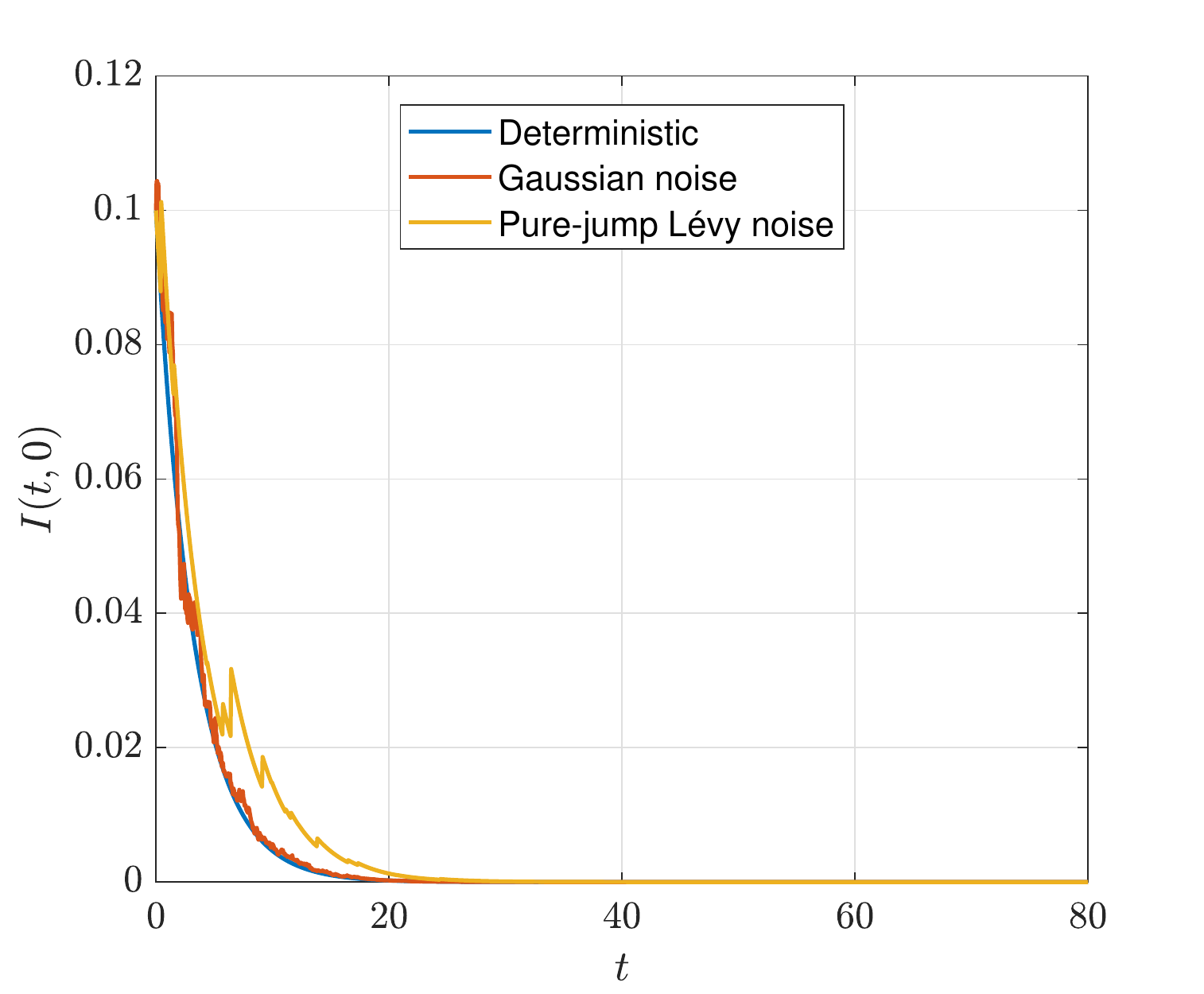}
			\end{minipage}
			\begin{minipage}{5cm}
				\includegraphics[height=4.5cm]{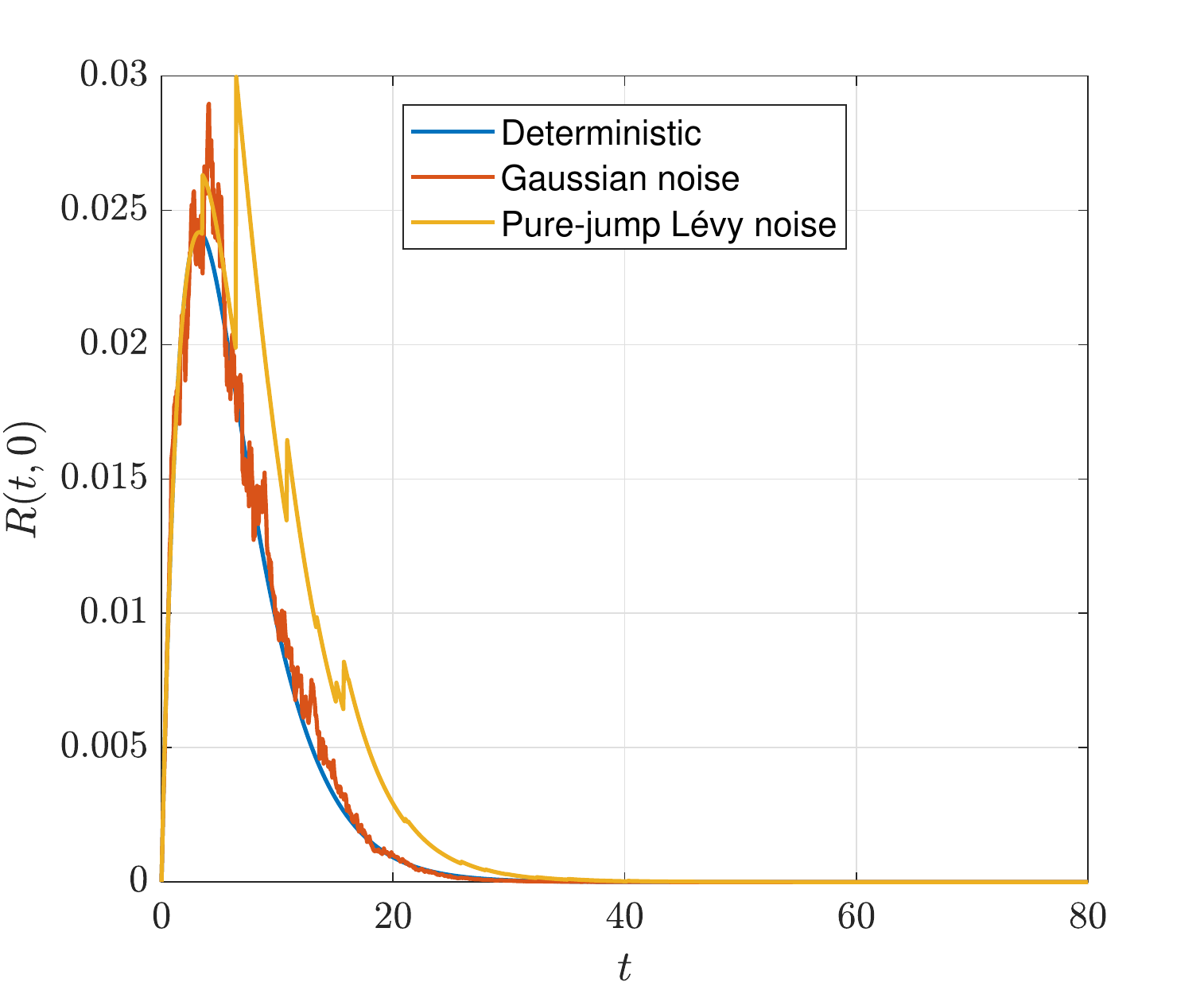}
			\end{minipage}
			\caption{Comparison of paths of the susceptible (left), infected (middle), recovered (right) at $x=0$.}
			\label{figstandardpaths}
		\end{figure}
		\subsection{Second example: Holling-type incidence rate}
		In this example, the chosen incidence function is given by \eqref{eq:saturated}, so that \ref{P3} is satisfied. The assigned values to the remaining input parameters in this case are given in Table \ref{tab:assigned2}. 
		\begin{table}[H]
			\centering
			\setlength\extrarowheight{2.5pt}
			\caption{Assigned values to the remaining parameters}
			\label{tab:assigned2}
			\begin{tabular}{|l|l|lll}
				\cline{1-2}
				Parameter&Assigned value  &  &  &  \\ \cline{1-2}
				$\Lambda$& $0.5$  &  &  &  \\ \cline{1-2}
				$\beta$& $0.4$ &  &  &  \\ \cline{1-2}
				$\mu $& $0.3$ & &  &  \\ \cline{1-2}
				$\gamma$& $0.05$ &  &  &  \\ \cline{1-2}
				$a$& $0.1$ &  &  &  \\ \cline{1-2}
			    $b$& $0$ &  &  &  \\ \cline{1-2}
				$\mathcal{C}_1, \mathcal{C}_2, \mathcal{C}_3$& $0.1$ &  &  &  \\ \cline{1-2}
				$\sigma_1, \sigma_2, \sigma_3$& $1$ &  &  &  \\ \cline{1-2}
			\end{tabular}
		\end{table}
		The assigned values to the parameters given in Table \ref{tab:assigned2} are in the aim of illustrating the effect of pure-jump Lévy noise in the case: 
		$
		\mathcal{R}_0:=\dfrac{\Lambda \beta}{\mu \left(\mu+\gamma\right)} > 1.
		$
		In this case, it is known that the endemic equilibrium point is globally asymptotically stable, meaning that the infection will persist within the population \cite{luo2019global}.
	
		Figures \ref{fighollings}-\ref{fighollingr} show the obtained solution in the deterministic case, stochastic case with Gaussian noise and stochastic case with pure-jump Lévy noise. It can be seen, in this case, that the persistence of the disease is remarkably higher in comparison to the deterministic case and the stochastic case with Gaussian noise. Additionally, similarly to the first case, from Fig. \ref{figstandardpaths}, one can observe a discontinuity in the paths of the obtained solution, in the case of pure-jump Lévy noise.
		\begin{figure}[H]
			\begin{minipage}{5cm}
				\includegraphics[height=4cm]{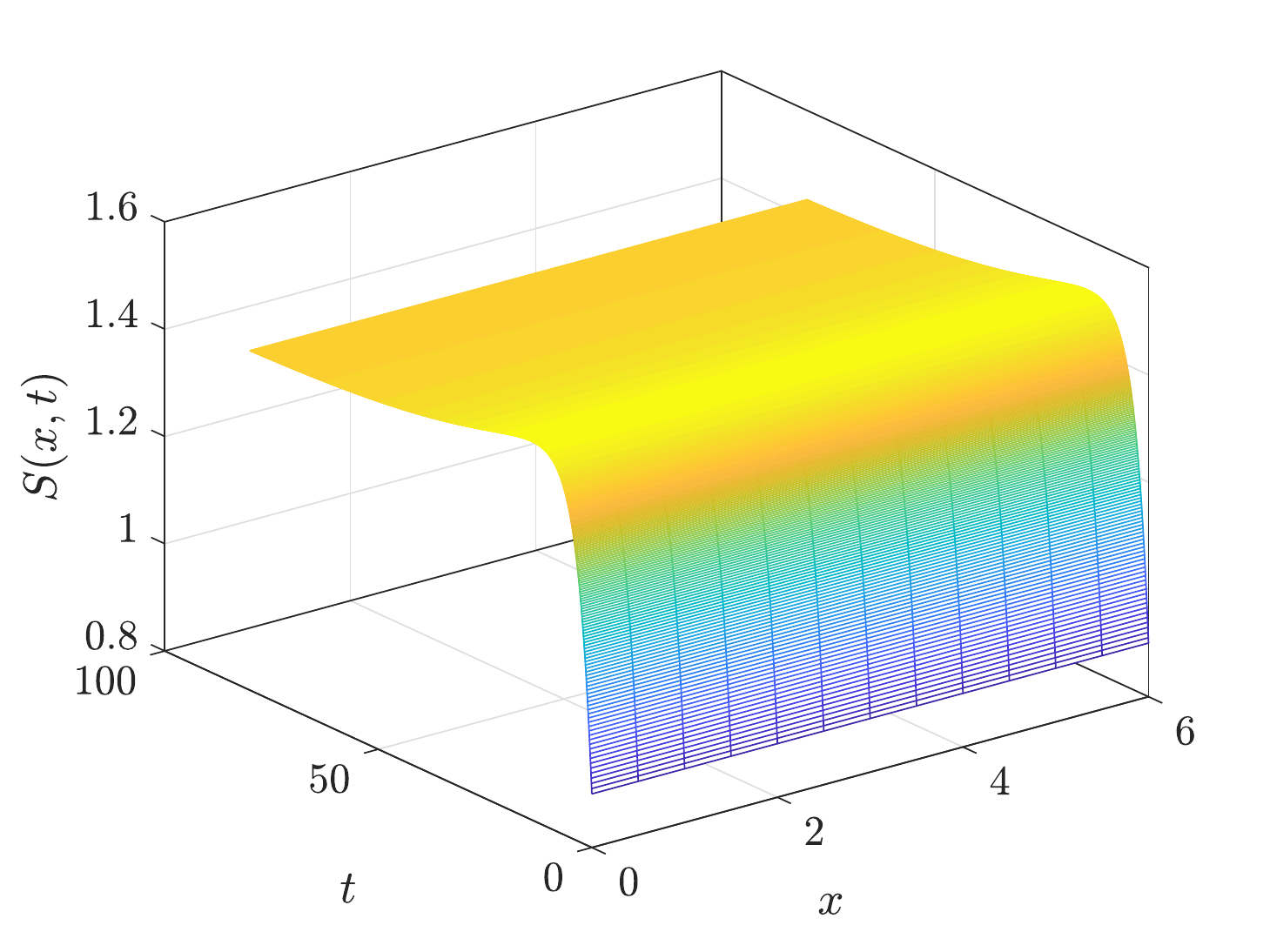}
			\end{minipage}
			\begin{minipage}{5cm}
				\includegraphics[height=4cm]{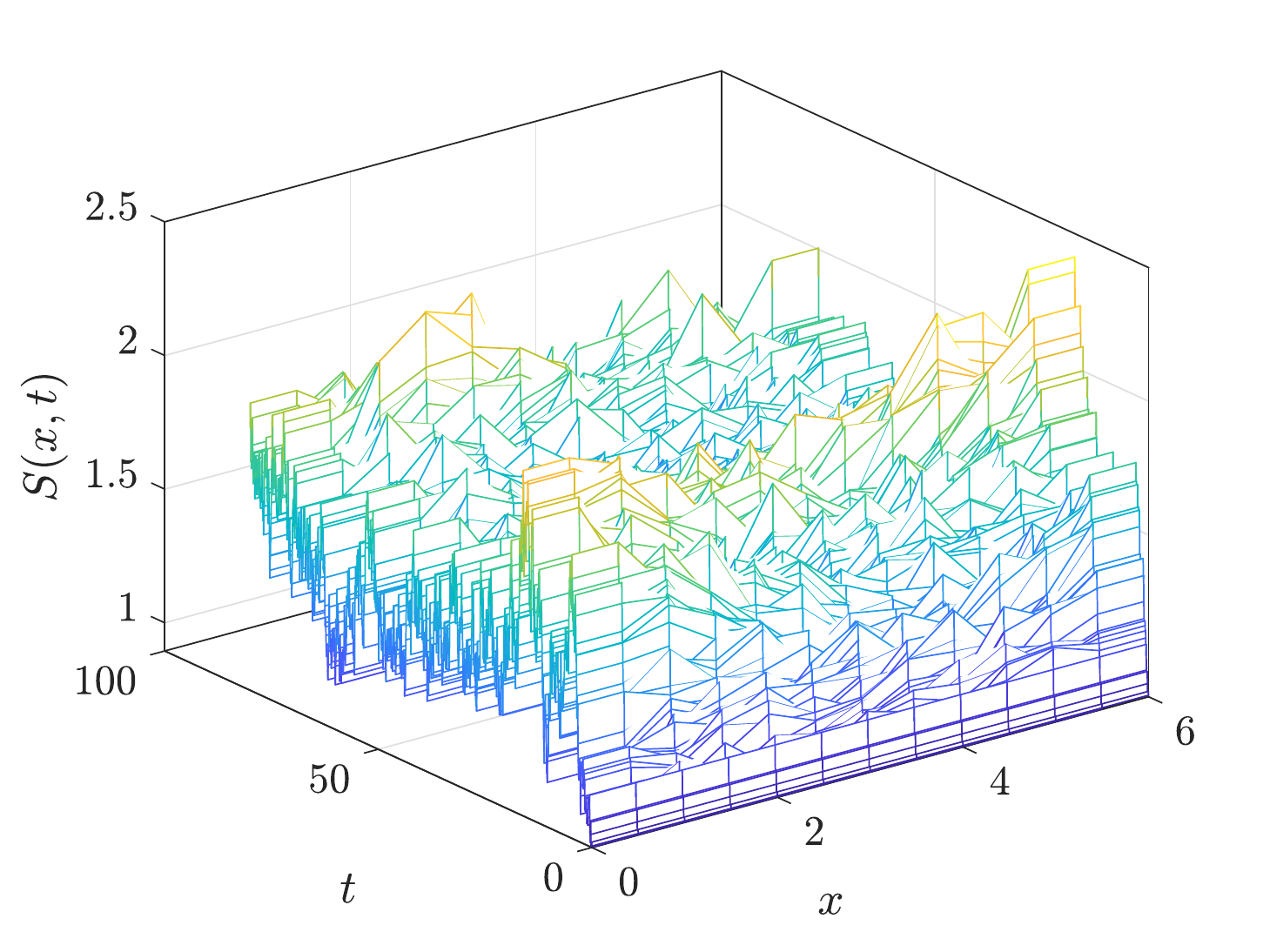}
			\end{minipage}
			\begin{minipage}{5cm}
				\includegraphics[height=4cm]{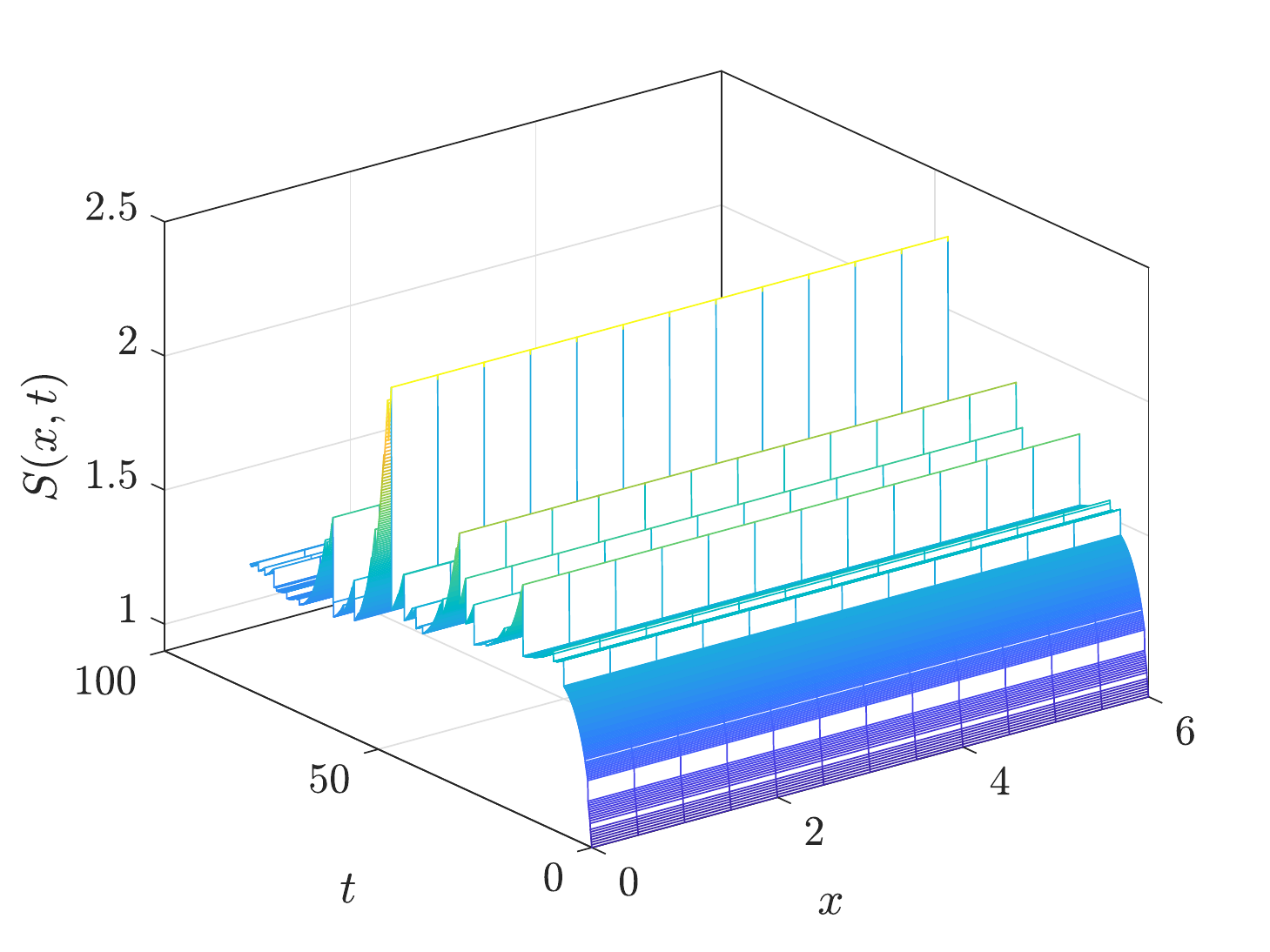}
			\end{minipage}
			\caption{Deterministic susceptible population (left), stochastic susceptible population with Gaussian noise (middle), stochastic susceptible population with pure-jump Lévy noise (right)}
			\label{fighollings}
		\end{figure}
		\begin{figure}[H]
			\begin{minipage}{5cm}
				\includegraphics[height=4cm]{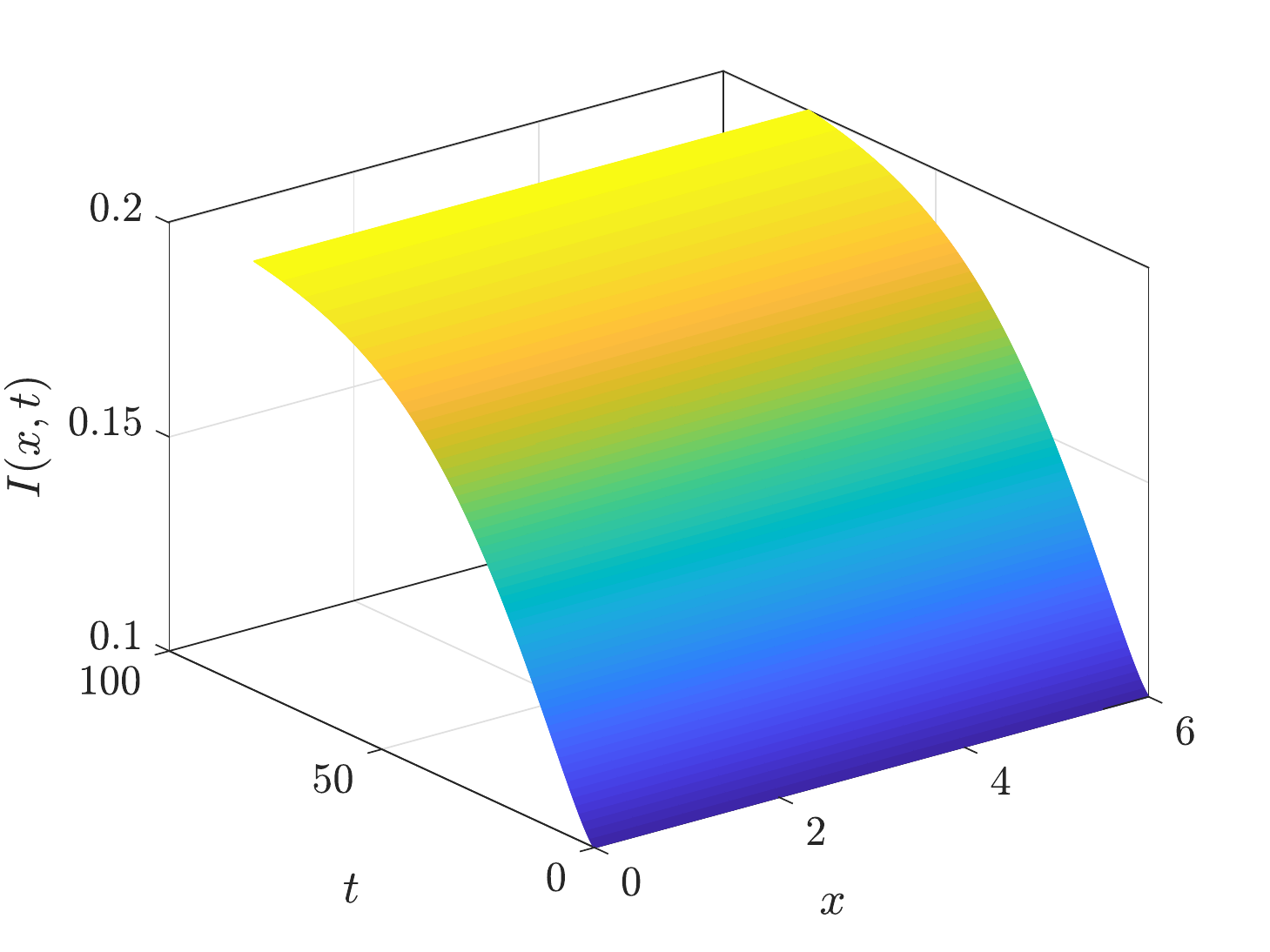}
			\end{minipage}
			\begin{minipage}{5cm}
				\includegraphics[height=4cm]{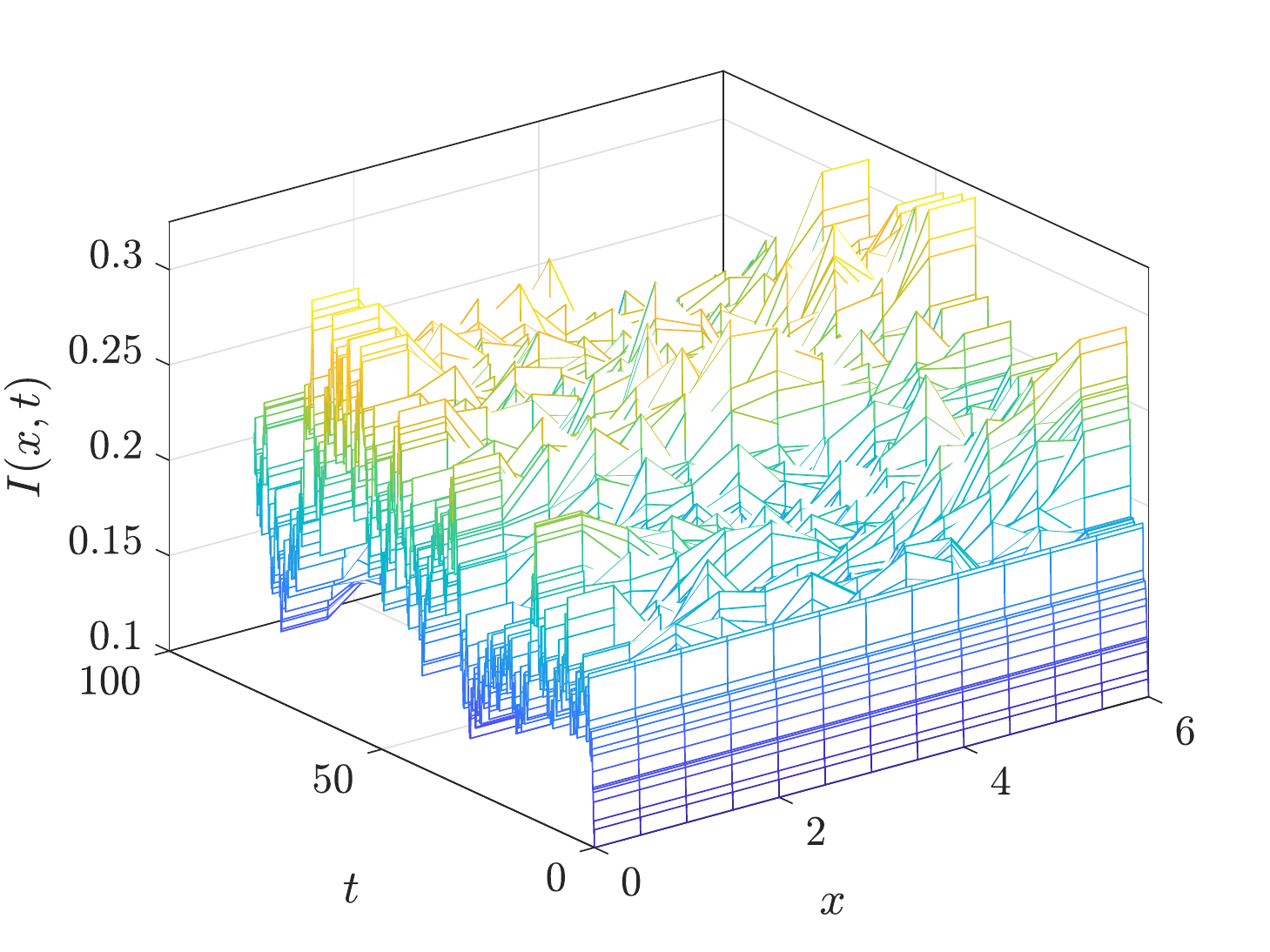}
			\end{minipage}
			\begin{minipage}{5cm}
				\includegraphics[height=4cm]{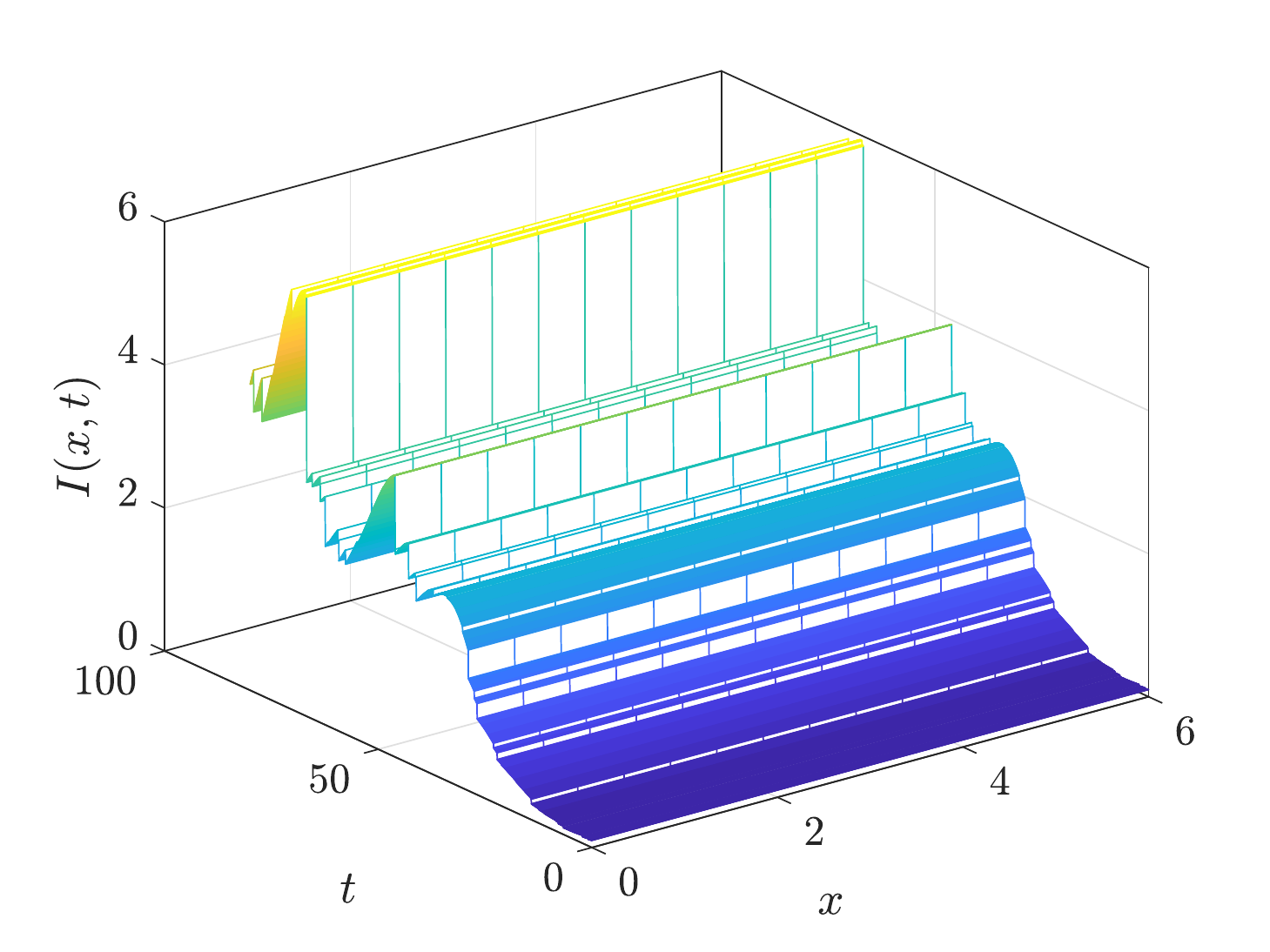}
			\end{minipage}
			\caption{Deterministic infected population (left), stochastic infected population with Gaussian noise (middle), stochastic infected population with pure-jump Lévy noise (right)}
			\label{fighollingi}
		\end{figure}
		\begin{figure}[H]
			\begin{minipage}{5cm}
				\includegraphics[height=4cm]{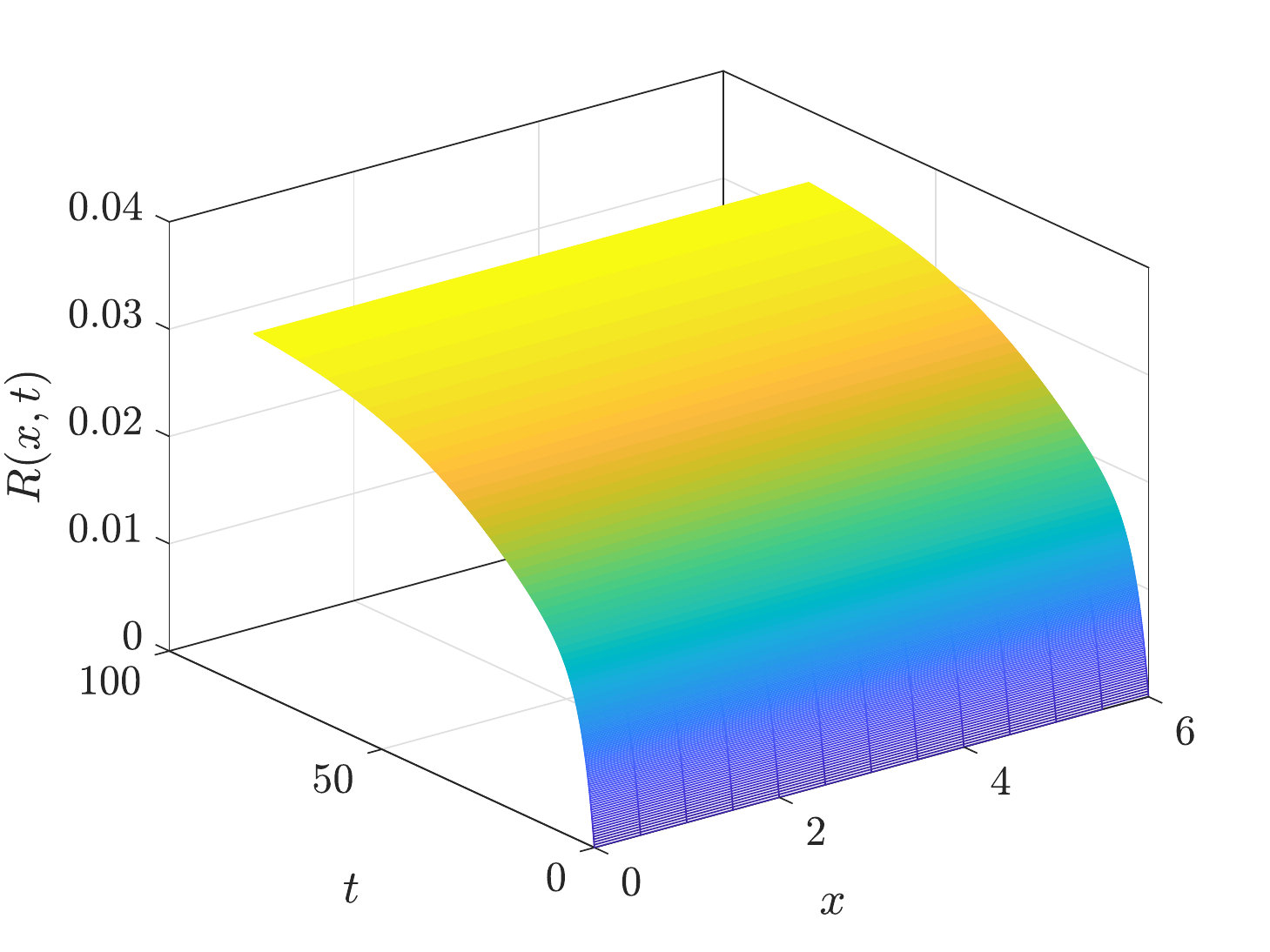}
			\end{minipage}
			\begin{minipage}{5cm}
				\includegraphics[height=4cm]{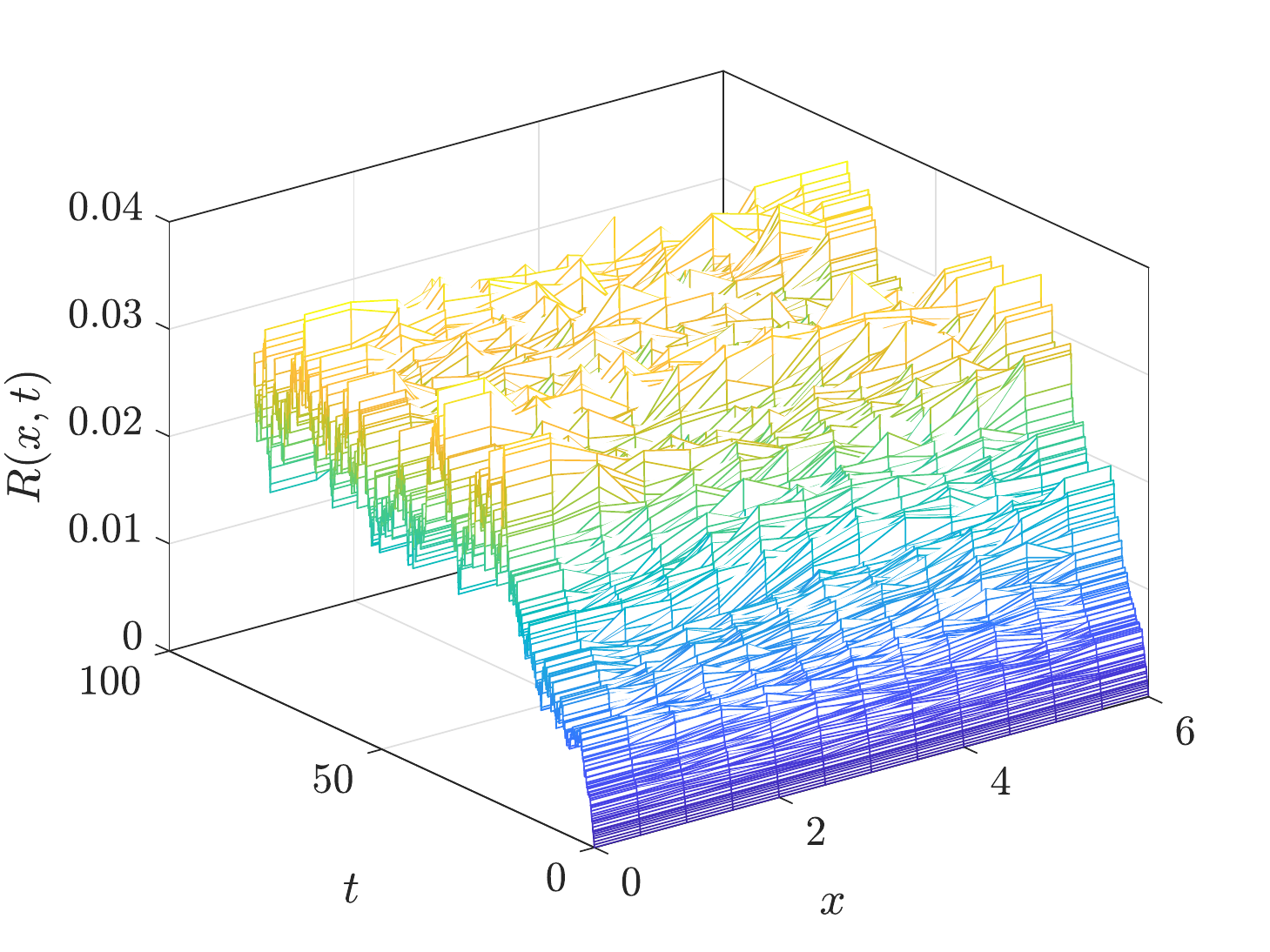}
			\end{minipage}
			\begin{minipage}{5cm}
				\includegraphics[height=4cm]{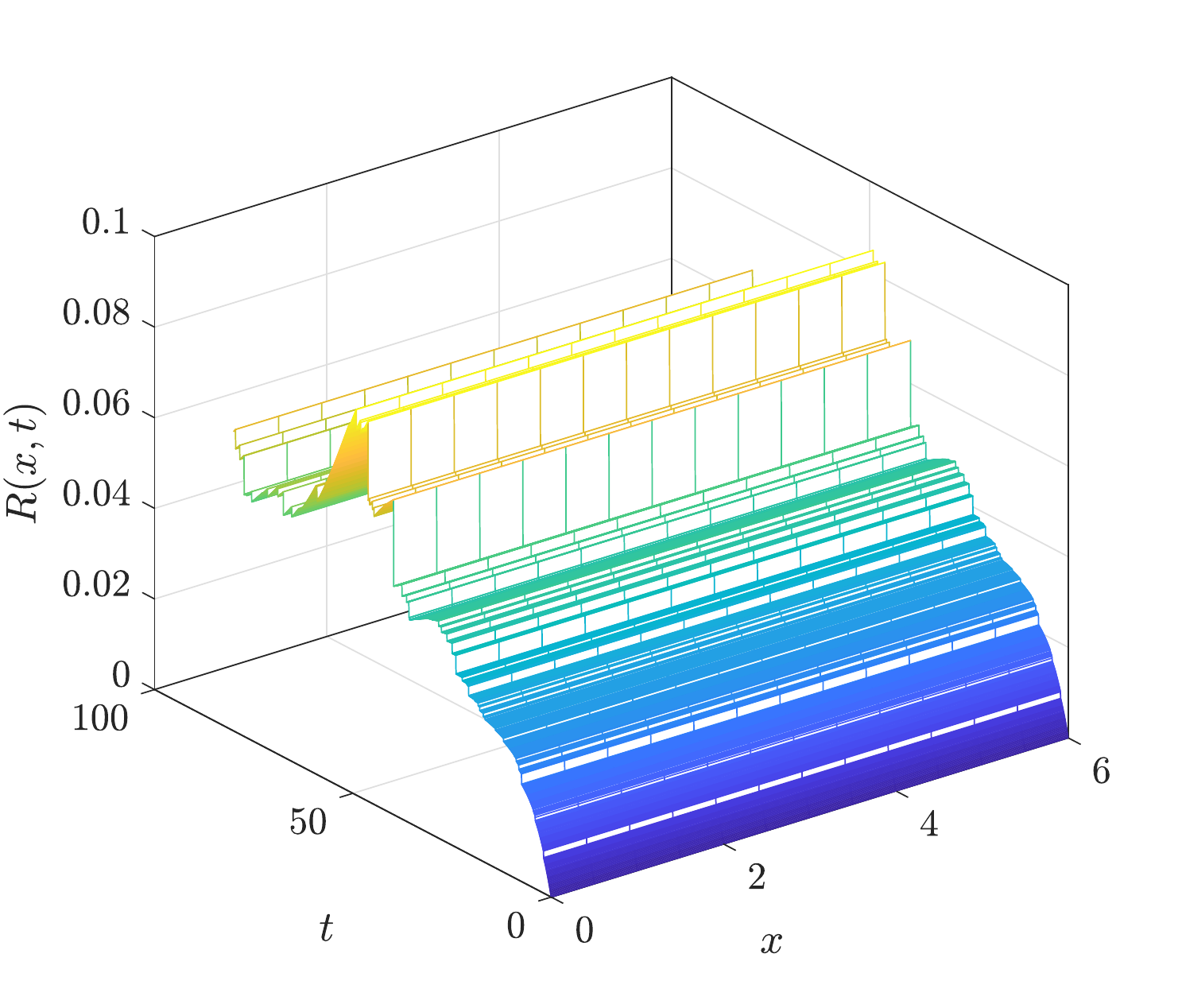}
			\end{minipage}
			\caption{Deterministic recovered population (left), stochastic recovered population with Gaussian noise (middle), stochastic recovered population with pure-jump Lévy noise (right)}
			\label{fighollingr}	
		\end{figure}
		\begin{figure}[H]
			\begin{minipage}{5cm}
				\includegraphics[height=4.5cm]{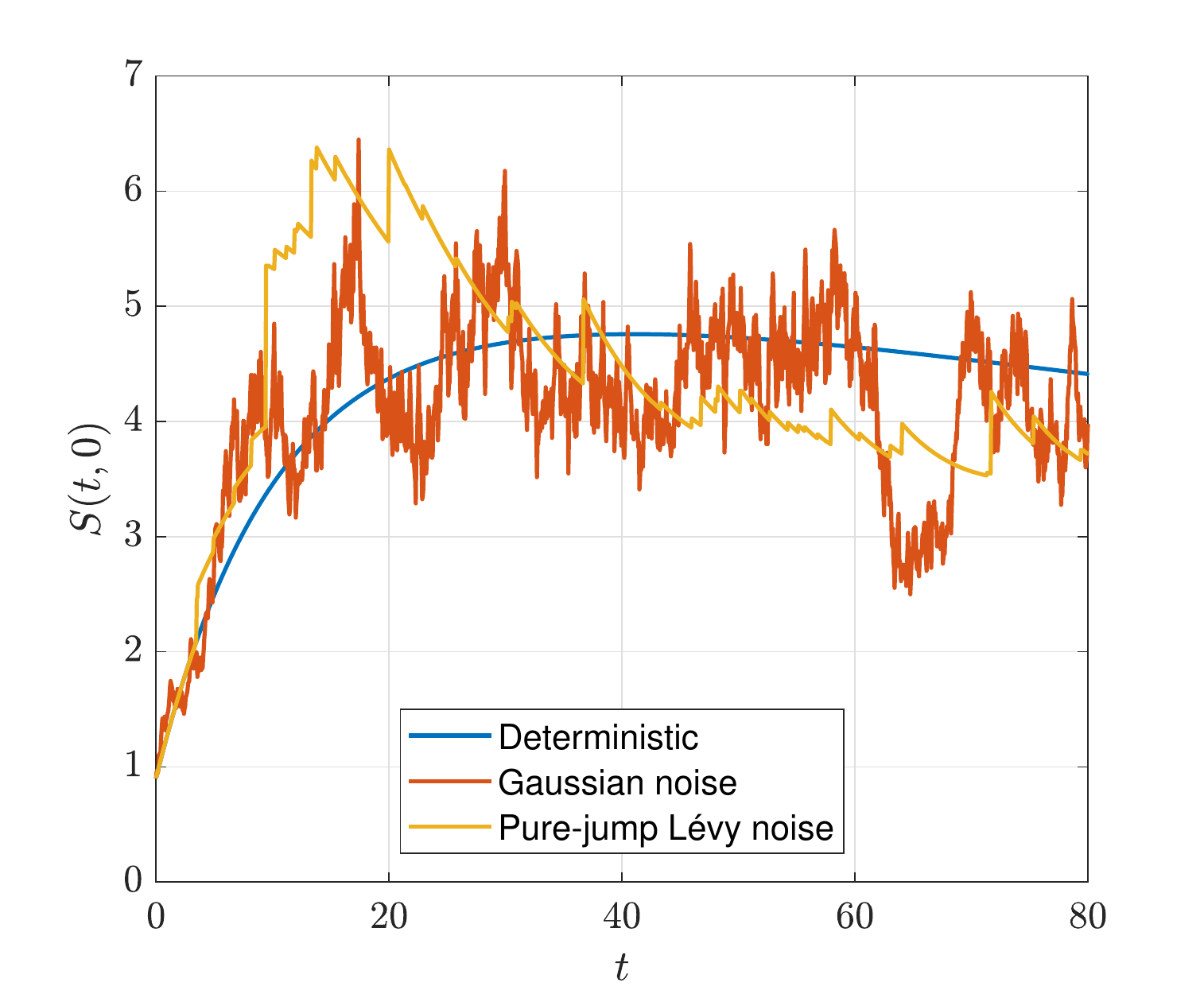}
			\end{minipage}
			\begin{minipage}{5cm}
				\includegraphics[height=4.5cm]{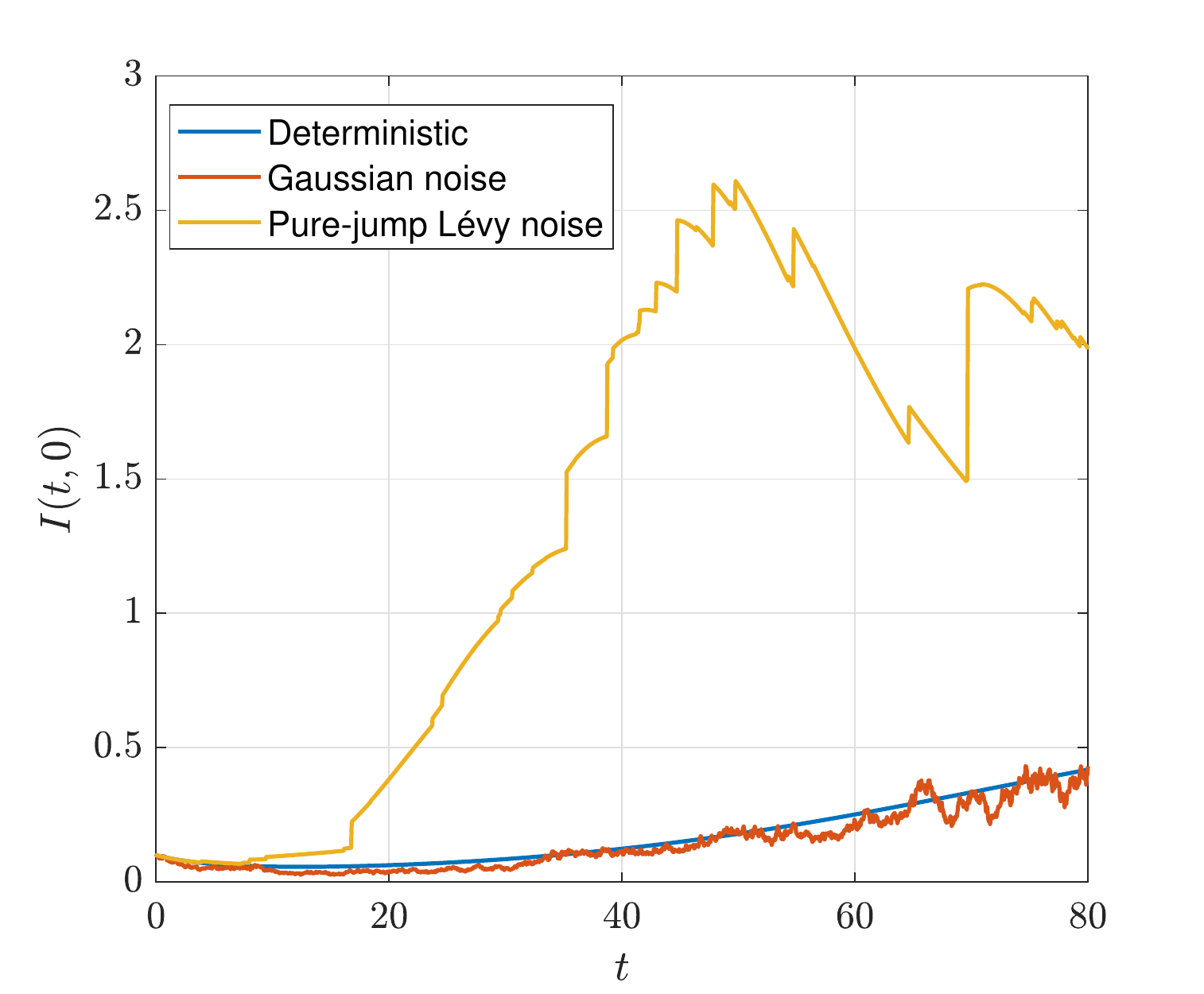}
			\end{minipage}
			\begin{minipage}{5cm}
				\includegraphics[height=4.5cm]{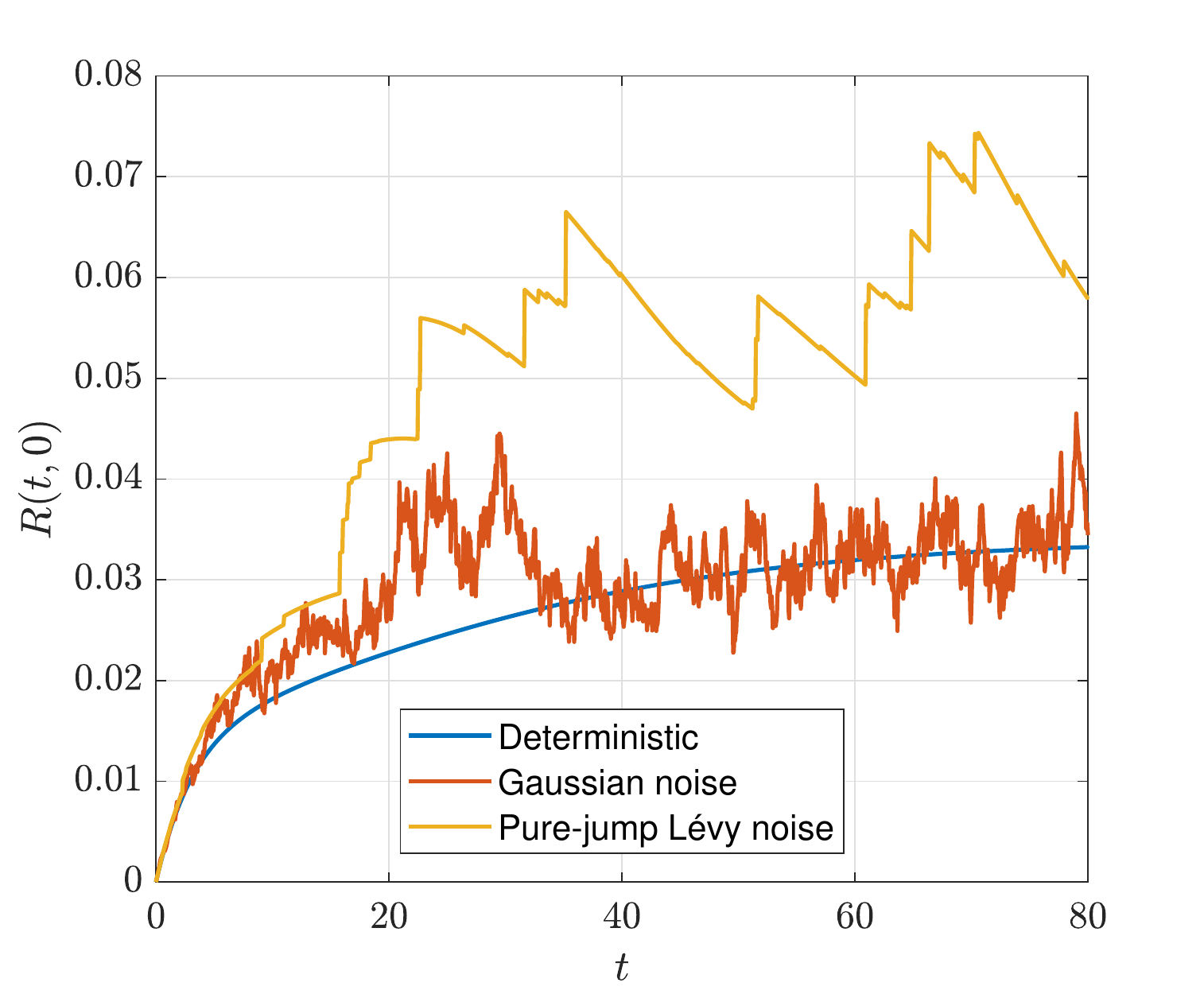}
			\end{minipage}
			\caption{Comparison of paths of the susceptible (left), infected (middle), recovered (right) at $x=0$.}
			\label{fighollingpaths}
		\end{figure}
		\section{Conclusion and open questions}\label{section6}
		In this work, we have proposed a new class of stochastic epidemic models which, as far as we know, is the first to incorporate the spatial dependency together with the massive discontinuous noise on the dynamics. Moreover, we have addressed its mathematical well-posedness and biological feasibility. It should be noted that the results established in this paper can be extended to other epidemic models falling into the same class and incorporating other biological characteristics, such as the temporary immunity, the incubation period, the vaccination procedure, the co-infection, etc; we mention for instance the models developed in \cite{li1995global,ng2003double,kyrychko2005global,liu2008svir,liu2013dynamics,wu2000homoclinic}. We should indicate that this was our first attempt to incorporate the discontinuous noise on the spatio-temporal dynamics of epidemics, and thus our focus was to mainly address the questions of mathematical well-posedness and biological feasibility which, now that they have been addressed, lay the ground-work for some further important emerging questions, which need to be tackled. These are briefly outlined as follows:
		\begin{itemize}
			\item \textbf{The case of the bilinear incidence function:} In this case, the Lipschitz property of the incidence function is merely local and the growth condition given in \ref{P3} is no longer verified. Hence, the truncation technique used in this paper only allows to prove the existence of local biologically feasible mild solutions. This result is not practical since, in general, there is no explicit information about the lifespan of the obtained solution. Hence, to address the global well-posedness, one needs to proceed with other techniques, such as variational methods (see e.g. \cite{mohan2022well}). 
			\item \textbf{The asymptotic behavior of the susceptible and infected populations:} Once the mathematical well-posedness and biological feasibility of a given epidemic model have been achieved, an interesting topic is deriving sufficient conditions guaranteeing the persistence and extinction of the susceptible and infected populations. Here, we mention that due to the result presented in Theorem \ref{exisstrongp2p3}, the difficulty arising in the lack of using Itô formula \cite[Theorem 3.5.3]{zhu2010study} for mild solutions has been addressed. Indeed, one can proceed to establish the aforementioned conditions for the sequence of strong solutions to Problem \eqref{approximativeprob}, then retrieve the desired result for mild solutions by a convergence argument. Thus, an interesting future direction would be to study the persistence and extinction for given explicit incidence functions satisfying \ref{P1} or \ref{P3}, such as the ones given by \eqref{eq:standard} and \eqref{eq:saturated}.   
			\item \textbf{The optimal control problem:} When the susceptible and infected populations are persistent, an emerging question is deriving the optimal control strategies steering the dynamics to a desired state, in which the densities of susceptible and infected populations are minimized. For instance, we refer to  \cite{mehdaoui2022optimal,mehdaouianalysis} where such results have been acquired in the deterministic case. However, as far as we know, the only existing result in the stochastic case driven by Gaussian noise can be found in the recent work of Shao \emph{et al.} \cite{shao2022necessary} and there are no results when it comes to stochastic spatio-temporal epidemic models driven by Lévy noise.
			\item \textbf{The parameter identification problem:} Once the mathematical well-posedness, biological feasibility and asymptotic analyses have been conducted, one focuses on the question of calibrating Model \eqref{eq:spdesirlevy}-\eqref{eq:init} using real data, which leads to what is known as the inverse problem. Namely, given observations of the susceptible, infected and recovered populations at time $T>0,$ one seeks the values assigned to the model parameters, such that the obtained solution provides the best approximation of the given observations. Such results have been derived in the case of deterministic spatio-temporal epidemic models by Xiang and Liu \cite{xiang2015solving} as well as Coronel \emph{et al.} \cite{coronel2021existence}. On the other hand, in the case of stochastic time-dependent epidemic models driven by Gaussian noise, this problem has been addressed by Mummert and Otunuga \cite{mummert2019parameter}. However, as far as we know, for stochastic spatio-temporal epidemic models driven by Gaussian or Lévy noise, this is still an open question. 
		\end{itemize}
		Due to the merit of the aforementioned questions, we will treat them independently in our next future works.
		  \section*{Acknowledgments}
		The author would like to express his sincere thanks to Professor Mouhcine Tilioua and  Professor Abdesslem Lamrani Alaoui  for their fruitful remarks and suggestions.
		\section*{Conflict of interest}
		No conflict of interest to be declared.
		\bibliographystyle{unsrt}
		\bibliography{MTL_SPAT_SIR_BIB.bib}
	\end{document}